\documentclass[10pt,reqno]{amsart}
\usepackage{amssymb,mathrsfs,graphicx}
\usepackage{ifthen}
\usepackage[colorlinks=true, pdfstartview=FitV, linkcolor=blue,citecolor=deepred, urlcolor=RoyalBlue]{hyperref}

\usepackage[margin=1in]{geometry}
\usepackage{caption}
\usepackage{rotating,dsfont}
\usepackage{enumitem}
\DeclareMathOperator*{\esssup}{ess\,sup}

\usepackage{soul}
\usepackage{cancel}
\usepackage[dvipsnames]{xcolor}
\definecolor{deepred}{rgb}{0.7, 0, 0} 
\newcommand{\delE}{\delta{\mathscr E}}
\renewcommand{\geq}{\geqslant}
\renewcommand{\ge}{\geqslant}
\renewcommand{\leq}{\leqslant}
\renewcommand{\le}{\leqslant}

\usepackage{soul}
\usepackage{cancel}

 \usepackage{tikz}
 \usetikzlibrary{arrows.meta,positioning,calc}

\usepackage{colortbl}
\definecolor{black}{rgb}{0.0, 0.0, 0.0}
\definecolor{red}{rgb}{1.0, 0.5, 0.5}
\provideboolean{shownotes} 
\setboolean{shownotes}{true} 
\newcommand{\margnote}[1]{
\ifthenelse{\boolean{shownotes}}%
{\marginpar{\raggedright\tiny\texttt{#1}}}%
{}%
}
\newcommand{\hole}[1]{
\ifthenelse{\boolean{shownotes}}%
{\begin{center} \fbox{ \rule {.25cm}{0cm} \rule[-.1cm]{0cm}{.4cm}
\parbox{.85\textwidth}{\begin{center} \texttt{#1}\end{center}} \rule
{.25cm}{0cm}}\end{center}} {} }

\title[Lagrangian formulation of alignment dynamics]{Lagrangian formulation and Eulerian closure\\in alignment  dynamics}

\author[Carrillo]{Jos\'{e} A. Carrillo}
\address[Jos\'{e} A. Carrillo]{\newline Mathematical Institute, University of Oxford, Oxford OX2 6GG, United Kingdom}
\email{jose.carrillo@maths.ox.ac.uk}

\author[Choi]{Young-Pil Choi}
\address[Young-Pil Choi]{\newline Department of Mathematics, Yonsei University, Seoul 03722, Republic of Korea}
\email{ypchoi@yonsei.ac.kr}

\author[Tadmor]{Eitan Tadmor}
\address[Eitan Tadmor]{\newline Department of Mathematics and IPST, University of Maryland, College Park, MD 20742, USA}
\email{tadmor@umd.edu}

\numberwithin{equation}{section}

\newtheorem{theorem}{Theorem}[section]
\newtheorem{lemma}{Lemma}[section]

\newtheorem{proposition}{Proposition}[section]
\newtheorem{remark}{Remark}[section]

\newcommand{\R}{\mathbb R}

\newcommand{\N}{\mathbb N}

\newcommand{\bq}{\begin{equation}}
\newcommand{\eq}{\end{equation}}
\newcommand{\e}{\varepsilon}
\newcommand{\lt}{\left}
\newcommand{\rt}{\right}

\newcommand{\pa}{\partial}

\newcommand{\intr}{\int_{\R^d}}

\newcommand{\intrr}{\iint_{\R^d \times \R^d}}

\newcommand{\calA}{\mathcal A}

\newcommand{\calD}{\mathcal D}
\newcommand{\calE}{\mathcal E}

\newcommand{\calI}{\mathcal I}

\newcommand{\calK}{\mathcal K}
\newcommand{\calL}{\mathcal L}
\newcommand{\calM}{\mathcal M}

\newcommand{\calP}{\mathcal P}

\newcommand{\calR}{\mathcal R}

\newcommand{\rd}{\textnormal{d}}
\newcommand{\dx}{\textnormal{d}x}
\newcommand{\dr}{\textnormal{d}r}
\newcommand{\dt}{\textnormal{d}t}
\newcommand{\dv}{\textnormal{d}v}
\newcommand{\dy}{\textnormal{d}y}
\newcommand{\dw}{\textnormal{d}w}
\newcommand{\dz}{\textnormal{d}z}

\newcommand{\ds}{\textnormal{d}s}
\newcommand{\tr}{\textnormal{tr}}

\makeatletter
\def\moverlay{\mathpalette\mov@rlay}
\def\mov@rlay#1#2{\leavevmode\vtop{%
   \baselineskip\z@skip \lineskiplimit-\maxdimen
   \ialign{\hfil$\m@th#1##$\hfil\cr#2\crcr}}}
\newcommand{\charfusion}[3][\mathord]{
    #1{\ifx#1\mathop\vphantom{#2}\fi
        \mathpalette\mov@rlay{#2\cr#3}
      }
    \ifx#1\mathop\expandafter\displaylimits\fi}
\makeatother

\newcommand{\supp}{\text{supp}}
\newcommand{\diam}{\text{diam}}

\begin{document}
\allowdisplaybreaks

\date{\today}

\keywords{Lagrangian alignment, Euler--Reynolds formulation, Euler--alignment system, velocity flocking, mean-field limit.}

\begin{abstract}  
We investigate a continuum Lagrangian $p$-alignment  system given by a nonlocal mean-field system of ordinary differential equations for interacting agents with weak initial data. We first establish global well-posedness of the Lagrangian dynamics and derive quantitative flocking estimates. We next construct Eulerian variables from the possibly non-injective Lagrangian flow via pushforward and disintegration, which leads to an Euler--Reynolds--alignment system featuring a nonnegative Reynolds stress and, for $p>2$, a nonlinear defect force induced by microscopic velocity fluctuations. Assuming only heavy-tailed interaction, we then show that these defect terms vanish asymptotically, leading to asymptotic mono-kinetic closure in the long-time limit. In the linear case $p=2$, we further obtain global weak solutions to the Euler--alignment system, including a sharp one-dimensional critical-threshold characterization and a global result in higher dimensions under a large-coupling condition.  Finally, we establish a uniform-in-time mean-field stability estimate for the particle Cucker--Smale system in the linear regime and deduce uniform-in-time convergence toward the mono-kinetic Eulerian limit; for general $p\ge2$, we also obtain a finite-time mean-field convergence result toward the associated kinetic/Lagrangian alignment dynamics.
\end{abstract}

\maketitle \centerline{\date}

\setcounter{tocdepth}{1}
\tableofcontents

%
%
%
%
\section{Introduction}
We consider the following infinite-dimensional, measure-dependent system of ordinary differential equations describing the Lagrangian evolution of a continuum ensemble of interacting agents:
\begin{align}\label{Lag_p}
\begin{aligned}
\pa_t \eta_t &= v_t, \quad (t,x) \in \R_+ \times \supp(\rho_0),\cr
\pa_t v_t &=\kappa \intr \phi(\eta_t(x) - \eta_t(y)) G_p(v_t(y) - v_t(x))\,\rho_0(\dy), \quad  G_p(\xi):=|\xi|^{p-2}\xi,
\end{aligned}
\end{align}
which we call the \emph{Lagrangian $p$-alignment formulation}. When $p=2$, the interaction becomes linear in the velocity difference, and we refer to \eqref{Lag_p} as the \emph{Lagrangian alignment formulation}. Here $\kappa>0$ denotes the coupling strength, $\rho_0 \in \calP(\R^d)$ is a given probability measure describing the initial spatial distribution of particles, and the initial data are prescribed as
\bq\label{ini:Lag_p}
(\eta_t(x), v_t(x))|_{t=0} = (x, u_0(x)), \quad x \in \supp(\rho_0).
\eq
Throughout the paper, we assume that the communication kernel $\phi \in C^1 \cap W^{1,\infty}(\R^d;\R_+)$ is radially symmetric and non-increasing. In particular, $\phi(z)=\phi(|z|)$ and $\phi$ is even, ensuring symmetry of
pairwise interactions.

The system \eqref{Lag_p} can be viewed as a \emph{nonlocal mean-field ODE} posed on the space of measurable mappings $(\eta_t,v_t): \supp(\rho_0)\to\R^d$, where each trajectory $x \mapsto (\eta_t(x),v_t(x))$ evolves according to a velocity field depending nonlocally on the entire configuration through $\rho_0$. Viewed from this perspective, the Lagrangian system \eqref{Lag_p} provides a natural intermediate framework that will allow us to connect discrete alignment models with their Eulerian continuum descriptions. Moreover, as will be seen later, it also admits a natural kinetic representation in phase space, further linking the Lagrangian and kinetic viewpoints on alignment dynamics.

%
%
%
%
%
\medskip
\noindent {\bf Cucker--Smale flocking model and continuum limits.} The celebrated Cucker--Smale (CS) model describes the emergence of collective alignment in a system of $N$ interacting agents, such as birds in a flock or individuals in a crowd \cite{CS07}. In its classical particle formulation, the dynamics are governed by
\begin{align}\label{CS}
\begin{aligned}
\dot{x}_i &= v_i, \quad i=1,\dots, N, \quad t > 0,\cr
\dot{v}_i &= \frac\kappa N \sum_{j=1}^N\phi (x_j - x_i)G_p(v_j - v_i),
\end{aligned}
\end{align}
where $x_i,v_i\in\R^d$ denote the position and velocity of the $i$-th agent at $t\geq0$. Under mild regularity and decay assumptions on $\phi$, the CS model exhibits \emph{velocity alignment}: the velocity diameter decays to zero, often resulting in asymptotic consensus or flocking behavior. For the classical case $p=2$, this phenomenon and its large-time behavior have been extensively studied, see for instance \cite{CS07, HL09, HT08}, while nonlinear velocity couplings $p\neq2$ were investigated in \cite{CRC10,HHK10}.

In the mean-field limit $N\to\infty$, the empirical measure
\[
\mu_t^N:=\frac1N\sum_{i=1}^N \delta_{(x_i(t),v_i(t))},
\]
associated to \eqref{CS}, is expected to converge to a kinetic distribution $f_t(x,v)$ solving the Vlasov-type equation
\bq\label{VA}
\pa_t f_t + v \cdot \nabla_x f_t + \nabla_v \cdot (F_p[f_t]f_t) =0,
\eq
where the mean-field alignment force is given by
\[
F_p[f_t](x,v) = \kappa \intrr \phi(x-y)G_p(w-v)f_t(\dy,\dw).
\]

The kinetic formulation \eqref{VA} provides a statistical description of the collective dynamics, retaining the full velocity distribution at each spatial location while averaging out individual particle labels. It thus serves as a natural continuum limit of the particle system \eqref{CS}, allowing one to investigate alignment mechanisms and large-time behavior at the level of phase-space measures, without imposing any \emph{a priori} concentration or mono-kinetic assumptions. From this perspective, the well-posedness and qualitative properties of kinetic alignment equations
of the form \eqref{VA} have been extensively studied; we refer to \cite{CCH14, CFRT10, CJu23, CZ21, HL09,   KMT13,  MMPZ19} and the references therein. The rigorous derivation of \eqref{VA} from the particle dynamics \eqref{CS} has also been the subject of extensive research. Mean-field limits have been established for linear velocity coupling in, for instance, \cite{CCHS19, CFRT10, HL09}.  Related quantitative mean-field limits and propagation of chaos estimates for flocking models with nonlinear velocity couplings have also been obtained recently, see \cite{NSTpre}.

At the hydrodynamic level, assuming local velocity concentration $f_t(x,v)\simeq \rho_t(x)\otimes \delta_{u_t(x)}(v)$, one formally obtains the macroscopic (mono-kinetic) \emph{Eulerian $p$-alignment system} \cite{LuTa23, Tad23}
\begin{align}\label{EA}
\begin{aligned}
&\pa_t \rho_t + \nabla \cdot (\rho_t u_t) = 0, \quad t>0, \ x \in \R^d, \cr
&\pa_t (\rho_t u_t) + \nabla \cdot (\rho_t u_t \otimes u_t) = \rho_t\calA_p[\rho_t,u_t],
\end{aligned}
\end{align}
where
\[
\calA_p[\rho_t,u_t](x) := \kappa \intr \phi(x-y) G_p(u_t(y) - u_t(x))\,\rho_t(\dy).
\]
This system describes the evolution of the macroscopic density $\rho$ and the mean velocity field $u$, incorporating nonlocal alignment effects through $\phi$. 

The rigorous derivation of hydrodynamic equations of the form \eqref{EA} from underlying kinetic models has attracted considerable attention in recent years. Depending on the modeling assumptions and the relaxation mechanisms involved, such limits may lead to Euler-type systems with or without additional pressure terms. For linear velocity coupling ($p=2$), hydrodynamic limits have been derived from kinetic equations with local alignment or diffusion effects, yielding Euler--alignment systems possibly augmented by pressure, see for instance \cite{CCJ21, CH24, CK23, FK19, KMT15}. Related results for nonlinear velocity couplings have been obtained more recently, including models with local alignment interactions, cf. \cite{BT25}. Complementary to the kinetic approach, mean-field limits directly connecting particle systems \eqref{CS} to hydrodynamic equations \eqref{EA} have also been established, both for regular communication kernels \cite{ACPSpre, CC21, Shv21} and for singular kernels, covering linear and nonlinear velocity couplings \cite{CFP25, FP24}. 

Beyond derivation results, the Euler--alignment system \eqref{EA} has been the subject of an extensive well-posedness and large-time analysis. Global existence, uniqueness, and asymptotic flocking behavior have been studied under various assumptions on the communication kernel and the initial configuration; see, among many others, \cite{CWZ19, CCTZ25, Cho19, CJu24, DMPW19, DKRT18, HT08, HKK15, KT18, LLST22,  LS19,  ST20, ST21,  ST17, ST18, Tad21, Tad23, TT14, WZpre}. A particularly striking feature of alignment models is the presence of
\emph{critical threshold phenomena}, whereby the global regularity or finite-time breakdown of solutions is determined by delicate structural conditions on the initial data; we refer to \cite{CCTT16, LTWpre, Tadpre, TT14, Tan20} and references therein. We also note that Lagrangian-based approaches have been explored in related
one-dimensional settings, including studies on Lagrangian trajectories \cite{Les20} and sticky particle dynamics for the one-dimensional Euler--alignment system \cite{Gal25, LT23}.

For a broader perspective on alignment models and their continuum descriptions, including particle, kinetic, and hydrodynamic viewpoints,  as well as related multiscale passages between microscopic, mesoscopic, and macroscopic descriptions, we refer to \cite{CCP17, CFTV10, CHL17, MT14,  MMPZ19, PTpre, Shv21, Shv24, Tad21, Tad23}, and the references therein.


 \subsection{Main results}
 The viewpoint adopted in this work differs in a fundamental way from the classical Eulerian approaches to alignment dynamics. Traditionally, Lagrangian trajectories have been introduced \emph{a posteriori} as characteristics associated with a given Eulerian velocity field, primarily as a tool to analyze long-time behavior. In this setting, to make sense of a characteristic description, one typically assumes that the velocity field is bounded and Lipschitz, and a substantial part of the analysis is therefore devoted to ensuring the well-posedness of the Eulerian system so that such a Lagrangian description is meaningful.
 
In contrast, we take the continuum Lagrangian system \eqref{Lag_p} as the primary object. Given an initial configuration $(\rho_0,u_0)$, the Lagrangian alignment formulation is naturally well-defined as a nonlocal mean-field ODE, even for very weak initial data: the spatial distribution $\rho_0$ may be an arbitrary probability measure,
and for the basic well-posedness theory, it suffices to assume $u_0\in L^\infty(\rho_0)$. Stronger regularity assumptions on $u_0$, such as $u_0\in W^{1,\infty}(\rho_0)$, will be imposed later when studying finer properties of the induced Eulerian dynamics, including injectivity of the flow and related closure mechanisms. In this way, the existence of Lagrangian solutions follows directly from the structure of \eqref{Lag_p}, without requiring \emph{a priori} regularity of an Eulerian velocity field. Thus, for example, our Lagrangian formulation enables us to trace the alignment dynamics of a ``blob'' subject to a discontinuous initial configuration $\displaystyle \rho_0=\frac{1}{|\Omega|}{\mathds 1}_\Omega$ supported in $\Omega \subset \R^d$.
 

 \subsubsection{Existence and long-time dynamics of Lagrangian $p$-alignment}
We begin by establishing global well-posedness and alignment properties of the continuum Lagrangian $p$-alignment system \eqref{Lag_p}. Our first result shows that, for essentially bounded initial velocities, the Lagrangian dynamics admit a unique global solution and satisfy quantitative bounds that describe their long-time flocking behavior.

We introduce the spatial and velocity diameters associated with solutions of \eqref{Lag_p} by
 \[
 \rd_\eta(t): = \esssup_{x,y \in \supp(\rho_0)}|\eta_t(x) - \eta_t(y)|, \quad  \rd_v(t): = \esssup_{x,y \in \supp(\rho_0)}|v_t(x) - v_t(y)|.
 \]
 
 The following theorem provides global well-posedness of the Lagrangian system and describes its long-time alignment behavior under suitable conditions.

 \begin{theorem}\label{thm:ext} Let  $p\ge2$ and suppose that $\phi:\R^d\to[0,\infty)$ is bounded and Lipschitz continuous. 
For any $u_0\in  L^\infty(\rho_0; \R^d)$, the Lagrangian $p$-alignment system \eqref{Lag_p}--\eqref{ini:Lag_p} admits a unique global solution
\bq\label{reg_Lag}
(\eta - {\rm id},v)\in C^1([0,\infty);L^\infty(\rho_0)\times  L^\infty(\rho_0))
\eq
satisfying
\[
\|v_t\|_{L^\infty(\rho_0)} \le \|u_0\|_{L^\infty(\rho_0)},  \quad  \|\eta_t-{\rm id}\|_{L^\infty(\rho_0)} \le t \|u_0\|_{L^\infty(\rho_0)} \quad\text{for all }t\ge0.
\]
 Moreover, assume that $\rho_0$ is compactly supported and the communication kernel $\phi$ is ``heavy-tailed'' in the sense that
\bq\label{condi_phi}
\int \limits^\infty \phi(r)\,\dr = \infty.
\eq
Then the velocity diameter asymptotically vanish, i.e.,
$\rd_v(t) \to 0 \  \text{as } t \to \infty$.
 \end{theorem}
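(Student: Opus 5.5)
We treat \eqref{Lag_p} as an ODE in the Banach space $X:=L^\infty(\rho_0;\R^d)\times L^\infty(\rho_0;\R^d)$ for the unknown $(\eta_t-{\rm id},v_t)$. The right-hand side is
\[
\mathcal F(\eta,v)(x):=\Big(v(x),\ \kappa\intr \phi(\eta(x)-\eta(y))G_p(v(y)-v(x))\,\rho_0(\dy)\Big).
\]
We first check that $\mathcal F$ maps $X$ to $X$; only $\eta-\eta(\cdot)$ enters, so shifting by ${\rm id}$ is harmless. Next, on every ball $\{\|v\|_{L^\infty}\le R\}$, $\mathcal F$ is Lipschitz. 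This follows from $|\phi(a)-\phi(b)|\le \|\phi\|_{\rm Lip}|a-b|$, boundedness of $\phi$, and $|G_p(\xi)-G_p(\zeta)|\le C_p(|\xi|+|\zeta|)^{p-2}|\xi-\zeta|$ for $p\ge2$. Measurability in $x$ is inherited via Fubini. Cauchy--Lipschitz in $X$ then gives a unique local $C^1$ solution, and global existence follows once $\|v_t\|_{L^\infty}$ is bounded a priori.

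For the maximum principle, note first that the uniform-norm bound requires a little care, since the kernel of the Lipschitz ODE is in $L^\infty$, not a pointwise max. The natural route is convexity: fix a unit vector $e$ and $M:=\esssup_x v_0(x)\cdot e$. We will show $\esssup_x v_t(x)\cdot e\le M$. For a.e. $x$, the scalar $w(t,x)=v_t(x)\cdot e$ satisfies
\[
\pa_t w = \kappa\int\phi\,(G_p(v_t(y)-v_t(x))\cdot e)\,\rho_0(\dy).
\]
The simplest rigorous approach is to avoid pointwise maxima and use a convex functional: $\frac{d}{dt}\int (w-M)_+^{q}\rho_0$. By the symmetry $x\leftrightarrow y$, together with $\phi$ even and $G_p$ monotone, this functional is nonincreasing in the radial direction. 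Alternatively, and more simply, we show that the ball $B(0,R)$, $R=\|u_0\|_{L^\infty}$, is invariant. Take $\frac{d}{dt}\int(|v_t|^2-R^2)_+\rho_0$ and symmetrize. The integrand $\phi(\cdot)(|v(x)|^2-R^2)_+\,v(x)\cdot G_p(v(y)-v(x))$ symmetrized gives $-\tfrac12\iint\phi\,(h(x)v(x)-h(y)v(y))\cdot G_p(v(x)-v(y))$ with $h=(|v|^2-R^2)_+$. The map $v\mapsto h(v)v$ is the gradient of a convex function, hence monotone, and $G_p(\xi)$ is a positive multiple of $\xi$. So the integrand is nonnegative and the functional stays zero, giving $\|v_t\|_{L^\infty}\le R$; note that this argument alone does not need the $L^p$ version. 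Then $\eta_t-{\rm id}=\int_0^t v_s\,\ds$ gives the second bound.

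For flocking, assume $\supp\rho_0\subset B(0,D_0)$ and apply the same monotone-functional argument to get the standard system of dissipative differential inequalities (SDDI):
\[
\frac{d}{dt}\rd_\eta\le \rd_v,\qquad \frac{d^+}{dt}\rd_v\le -c\,\kappa\,\phi(\rd_\eta)\,\rd_v^{\,p-1},
\]
with $c>0$ depending on $p$. This is the main obstacle: $\rd_v$ is an essential supremum over pairs. To handle it, we work with directional projections. For unit $e$, set $M_e(t)=\esssup_x v_t\cdot e$ and $m_e(t)=\operatorname{ess\,inf}_x v_t\cdot e$. We compare $v_t(x)\cdot e$ near $M_e$ with the averaged field using $\phi(\eta_t(x)-\eta_t(y))\ge\phi(\rd_\eta)$ (since $\phi$ is nonincreasing), and obtain $\frac{d}{dt}(M_e-m_e)\le -\kappa\phi(\rd_\eta)\,\Theta(M_e-m_e)$. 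For $p=2$, $\Theta(s)=s$, as in the Ha--Liu argument. For $p>2$, we use $G_p(\xi)\cdot e\ge$ something controlled by $(\xi\cdot e)^{p-1}$ only after a Jensen step, which yields $\Theta(s)\gtrsim s^{p-1}$ with a constant depending on $\rd_v(0)$. To make the essential-supremum argument rigorous, we will pass through $L^q(\rho_0\otimes\rho_0)$ norms of $(v(x)-v(y))\cdot e$ and send $q\to\infty$, or approximate by level sets.

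With the SDDI in hand, consider the Lyapunov functional
\[
\mathcal L(t)=\rd_v(t)^{\,2-p}/(p-2)\quad\text{(or }\log\rd_v\text{ when }p=2).
\]
Suppose $\rd_v\not\to0$. By monotonicity, $\rd_v(t)\ge\delta>0$ for all $t$. For $p=2$, we get $\rd_v(t)+\kappa\int_0^{\rd_\eta(t)}\phi\le\rd_v(0)+\kappa\int_0^{\rd_\eta(0)}\phi$; heavy tails then force $\rd_\eta$ bounded by some $D^\ast$, hence exponential decay, a contradiction. For $p>2$ we only have $\frac{d}{dt}\rd_v\le -c\kappa\phi(\rd_\eta)\delta^{p-2}\rd_v$. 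Integrating from the same SDDI and treating $\delta^{p-2}$ as a constant again bounds $\rd_\eta$ by heavy tails. Then $\phi(\rd_\eta)\ge\phi(D^\ast)>0$ gives $\rd_v\to0$, which contradicts $\rd_v\ge\delta$. Hence $\rd_v(t)\to0$.
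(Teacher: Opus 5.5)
Your proposal is correct. The well-posedness part is the same as the paper's: Picard--Lindel\"of in $L^\infty(\rho_0)\times L^\infty(\rho_0)$ for the displacement, with a Lipschitz constant that depends only on the velocity bound, followed by continuation. The other two steps take different routes.

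\textbf{Maximum principle.} The paper bounds the upper Dini derivative of $M_e(t)=\esssup_x v_t(x)\cdot e$ through an $\varepsilon$-maximizer, which uses no symmetry of $\phi$. You instead differentiate $\intr \tfrac12(|v_t|^2-R^2)_+^2\,\rho_0(\dx)$, symmetrize using evenness of $\phi$, and use that the gradient of a convex function is monotone against $G_p(\xi)=|\xi|^{p-2}\xi$. Write it with this squared version, so that the integrand is $C^1$ with gradient $2(|v|^2-R^2)_+v$, matching the integrand you actually wrote. Your route is cleaner and fully rigorous at the level of $\rho_0$-a.e.\ identities, at the price of using the symmetry of $\phi$ (which is available here).

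\textbf{Diameter inequality.} Your directional quantity $M_e-m_e$ with the Jensen step is equivalent to the paper's maximizing-pair argument. It even gives the constant $2^{2-p}$ with no dependence on $\rd_v(0)$, because $G_p(\xi)\cdot e\le -|\xi\cdot e|^{p-1}$ whenever $\xi\cdot e\le 0$. Of your two proposed justifications, the $\varepsilon$-level-set one works directly. It is not clear that $L^q(\rho_0\otimes\rho_0)$ norms of $(v(x)-v(y))\cdot e$ produce the damping rate $\phi(\rd_\eta)$ when $\phi$ is not constant.

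\textbf{Closing step.} The paper is more direct here. It integrates the inequality for $\rd_v$ and uses only the crude bound $\rd_\eta(t)\le \rd_\eta(0)+t\rd_v(0)$ and monotonicity of $\phi$ to get
\[
\int_0^\infty\phi(\rd_\eta(s))\,\ds\ \ge\ \rd_v(0)^{-1}\int_{\rd_\eta(0)}^\infty\phi(r)\,\dr=\infty,
\]
which also yields a quantitative rate. This is exactly what the functional you announce, $\rd_v^{2-p}/(p-2)$ (resp.\ $\log\rd_v$), would give. You then switch to the functional $\rd_v+c\kappa\delta^{p-2}\int_{\rd_\eta(0)}^{\rd_\eta}\phi$ and argue by contradiction. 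That argument is valid, since $\rd_v^{p-1}\ge\delta^{p-2}\rd_v$ and the heavy tail forces $\phi>0$ everywhere. For $p=2$ it gives extra information: a uniform bound on $\rd_\eta$ and exponential decay. For $p>2$, however, it gives no rate and is more roundabout.
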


 \begin{remark}If $\supp\rho_0$ is bounded, then ${\rm id}\in L^\infty(\rho_0;\R^d)$ and hence 
\[
\eta={\rm id}+(\eta-{\rm id})\in C^1([0,\infty);L^\infty(\rho_0)).
\] 
In this case, one can state the result equivalently as 
\[
(\eta,v)\in   C^1([0,\infty);L^\infty(\rho_0)\times  L^\infty(\rho_0)),
\]
with the same a priori bounds.
\end{remark}

\begin{remark} The well-posedness result in Theorem \ref{thm:ext} is established entirely within the Lagrangian framework, without employing any Eulerian regularity theory. The solution class \eqref{reg_Lag} involves only time regularity and essential boundedness with respect to the Lagrangian variable $x$. In particular, no spatial derivatives with respect to the Lagrangian coordinate (i.e., no assumptions on $\nabla_x\eta$ or $\nabla_x v$) are required or generated in the analysis.

All terms in \eqref{Lag_p} are well-defined for essentially bounded Lagrangian fields, since the alignment operator depends only on velocity differences $v(y)-v(x)$ and the composed interaction kernel $\phi(\eta(x)-\eta(y))$, integrated against $\rho_0(\dy)$. Thus, the construction does not rely on the flow map $\eta(t,\cdot)$ being differentiable or invertible.

In this sense, the solution of Theorem \ref{thm:ext} should be regarded as a \emph{classical solution in the ODE sense}: the curves $t\mapsto \eta_t-{\rm id}$ and $t\mapsto v_t$ belong to $C^1([0,\infty);L^\infty(\rho_0))$, and the Lagrangian system is satisfied pointwise for $\rho_0$-a.e. initial label $x$.
\end{remark}

\begin{remark} 
The Lipschitz continuity of $\phi$ in Theorem \ref{thm:ext} is used only to guarantee that the alignment operator is locally Lipschitz with respect to the Lagrangian variables $(\eta,v)$, which in turn yields uniqueness via the Picard--Lindel\"of theorem. If the kernel $\phi$ is assumed to be merely bounded (or even just measurable), all a priori estimates in the proof of Theorem \ref{thm:ext} remain valid, since they rely only on the boundedness of $\phi$ and the monotone structure of the alignment operator. In particular, the maximum principle for the velocity and the global-in-time
$L^\infty$ bounds for $(\eta,v)$ continue to hold.

However, without Lipschitz regularity of $\phi$, the alignment operator is no longer locally Lipschitz in $(\eta,v)$; hence classical ODE uniqueness theory does not apply.  In this context of ODE dynamics ``beyond-Lipschitz'', it would be interesting to investigate  the broader Sobolev/BV framework of DiPerna--Lions theory and its later extensions \cite{DL89,Amb04}, and its implications in the nonlocal setting of \eqref{Lag_p} for non-Lipschitz kernels. Since this lies beyond the scope of the present work, we do not pursue it here.
\end{remark}

\begin{remark}
For $p\ge3$, the flocking estimate in Theorem \ref{thm:ext} does not, in general, yield a uniform-in-time bound on the spatial diameter $\rd_\eta(t)$. In particular, $\rd_\eta(t)$ may grow in time; see, for instance, \cite{BT24,NSTpre}. Accordingly, for a general communication kernel, Theorem \ref{thm:ext} only yields the ``qualitative'' alignment
\[
\rd_v(t)\to0 \quad\text{as }t\to\infty,
\]
without an explicit decay rate in terms of $t$.

On the other hand, when $p \in [2,3)$, if the initial data satisfy
  \bq\label{eq:conditional-flocking}
\frac1{3-p} \rd_v^{3-p}(0) <  2^{2-p}\kappa \int_{\rd_\eta(0)}^\infty \phi(r)\,\dr,
 \eq
 then there exists $\rd_\eta^\infty > 0$ uniquely determined by
 \bq\label{rel_deta}
\frac1{3-p} \rd_v^{3-p}(0) =  2^{2-p}\kappa \int_{\rd_\eta(0)}^{\rd_\eta^\infty} \phi(r)\,\dr
 \eq
such that
\[
\sup_{t \geq 0} \rd_\eta(t) \leq \rd_\eta^\infty
\]
and the velocity diameter decays with the explicit bounds 
 \bq\label{decay_v_sub3}
\rd_v(t) \leq  \left\{ \begin{array}{ll}
\displaystyle \rd_v(0) e^{- \kappa \phi(\rd_\eta^\infty) t} & \textrm{if $p=2$},\\[2mm]
\displaystyle  \lt(\rd_v^{2-p}(0) + (p-2) 2^{2-p} \kappa \phi(\rd_\eta^\infty) t \rt)^{-\frac1{p-2}}& \textrm{if $p\in(2,3)$}.
  \end{array} \right.
 \eq
Thus, for $p\in[2,3)$, the above threshold condition already yields both velocity alignment and a uniform-in-time bound on the spatial diameter, without requiring the heavy-tail assumption \eqref{condi_phi}. The latter simply secures \emph{unconditional flocking} in the sense that \eqref{eq:conditional-flocking} and hence \eqref{rel_deta} hold for all initial configurations, independent of how large $\rd_v(0)$ is. In particular, the heavy-tail condition is only a convenient sufficient condition in this regime. See Remark \ref{rmk:p-sub3} for a more detailed discussion.
 
More generally, for $p>2$, the decay rate is governed by the growth of
\[
\int_0^t \phi(\rd_\eta(s))\,\ds.
\]
In particular, if $\phi$ is bounded from below by a positive constant on $[0,\infty)$, then
\[
\int_0^t \phi(\rd_\eta(s))\,\ds \gtrsim t,
\quad\text{and hence}\quad
\rd_v(t)\lesssim (1+t)^{-\frac1{p-2}}.
\]
For more slowly decaying kernels, for instance $\phi(r)=(1+r)^{-\beta}$ with $\beta<1$, one correspondingly obtains slower algebraic rates, such as
\begin{equation}\label{eq:dv_decay}
\rd_v(t)\lesssim (1+t)^{-\frac{1-\beta}{p-2}}.
\end{equation}
This is consistent with previously known algebraic flocking estimates for mono-kinetic $p$-alignment systems; see, for instance, \cite{Tad23, NSTpre}, and the references therein. Indeed, introducing
\[
\delE(t):=\intrr |v_t(x)-v_t(y)|^2\,\rho_0(\dx)\rho_0(\dy),
\]
one obtains from \cite[Proposition 5.2]{Tad23} an algebraic decay estimate for $\delE(t)$ under heavy-tail communication kernels, which is compatible with the algebraic decay estimate for the velocity diameter in \eqref{eq:dv_decay} appearing in \cite{NSTpre}.
\end{remark}



 \subsubsection{Lagrangian--Eulerian correspondence: Euler--Reynolds--alignment system}\label{ssec:ERA}
 
Our second main result concerns the Eulerian description canonically induced by the global Lagrangian flow constructed in Theorem \ref{thm:ext}.  The guiding principle is that Eulerian
quantities should be obtained \emph{without} imposing any \emph{a priori} spatial regularity on the flow map.  Instead, they are defined through two purely measure-theoretic operations: pushing forward the reference measure $\rho_0$ by the flow map $\eta_t$, and disintegrating $\rho_0$ along the fibres of $\eta_t$.

This viewpoint is essential in the present setting, since the global Lagrangian flow is constructed only in an $L^\infty$ framework and may fail to be injective. As a consequence, classical pointwise Eulerian formulas are no longer available. The disintegration of $\rho_0$ provides a fundamental way to recover Eulerian quantities from the Lagrangian dynamics, even in the presence of such non-injectivity. We develop this correspondence in detail in Section \ref{ssec:LEc}; here we only recall the minimal definitions needed to state the result. We emphasize that all objects introduced above are uniquely determined by the Lagrangian flow $(\eta_t,v_t)$ up to $\rho_t$-null sets, and no additional closure assumptions are imposed at the Eulerian level.

Given the Lagrangian variables $(\eta_t,v_t)$, we define the Eulerian density and momentum by pushforward:
\[
\rho_t := (\eta_t)_\#\rho_0, \quad m_t := (\eta_t)_\#(v_t\,\rho_0),
\]
so that $m_t\ll \rho_t$ and the barycentric velocity
\[
u_t := \frac{\rd m_t}{\rd\rho_t}
\]
is well defined. To account for the possible multiplicity of Lagrangian labels mapping to the same Eulerian position, we disintegrate $\rho_0$ along the fibres of $\eta_t$ (in the sense of \cite[Theorem 5.3.1]{AGS08}), yielding a family of probability measures $\{\nu_{t,z}\}_{z}$ concentrated on $\eta_t^{-1}(\{z\})$.  This allows us to represent $u_t$ as the fibre average of $v_t$ and to define a nonnegative Reynolds stress as the corresponding fibre variance:
\[
\tau_t := \rho_t \theta_t, \quad \theta_t(z):=\intr (v_t(x)-u_t(z))\otimes(v_t(x)-u_t(z))\,\nu_{t,z}(\dx) \succeq 0.
\]
In particular, the Reynolds stress $\tau_t$ is \emph{not} an additional unknown: it is uniquely induced by the Lagrangian flow (modulo $\rho_t$-null sets), and $\tau_{t=0}=0$.

The next theorem shows that the Eulerian triple $(\rho_t,u_t,\tau_t)$ satisfies an Euler--Reynolds--alignment (ERA) system.  Besides the Reynolds stress, a genuinely nonlinear \emph{defect force} appears for $p>2$, reflecting the interaction between the nonlinearity $G_p(\xi)=|\xi|^{p-2}\xi$ and the fibre fluctuations of $v_t$.
 
\begin{theorem} \label{thm:ERA}
Assume $p \geq 2$ and the hypotheses of Theorem \ref{thm:ext}. Let $(\rho_t, u_t, \tau_t)$ be the Eulerian variables induced by the Lagrangian solution $(\eta_t,v_t)$ through pushforward and disintegration as described above. Then $(\rho_t, u_t, \tau_t)$ is a global solution to 
\begin{align}\label{eq:ERA}
\begin{aligned}
\pa_t\rho_t+\nabla \cdot(\rho_t u_t)&=0, \\
\pa_t(\rho_t u_t)+\nabla \cdot(\rho_t u_t\otimes u_t + \tau_t)
&=\rho_t\lt(\calA_p[\rho_t,u_t]+\calR_p[\rho_t,u_t]\rt), 
\end{aligned}
\end{align}
in the sense of distributions on $(0,\infty)\times\R^d$, with initial data 
\[
(\rho_t, u_t, \tau_t)|_{t=0} = (\rho_0, u_0, 0),
\]
and satisfying\footnote{The continuity statements 
\[
\rho\in C([0,\infty);\calP(\R^d)),\quad
m\in C([0,\infty);\calM(\R^d;\R^d)) 
\]
refer to the narrow topology on $\calP(\R^d)$ and the weak$^\ast$
topology on the indicated spaces of Radon measures. In particular, a strong time-regularity of $u_t$ or $\theta_t$ is not implied.
}
\[
\rho \in C([0,\infty);\calP(\R^d)),\quad m=\rho u \in C([0,\infty);\calM(\R^d;\R^d)), \quad \tau \in L^\infty_{\rm loc}([0,\infty);\calM(\R^d;\R^{d \times d})).
\]
Here, the nonlinear defect force is
\[
\calR_p[\rho_t,u_t](z) :=\kappa\intr\phi(z-\zeta) \calK_t(z,\zeta)\,\rho_t(\rd\zeta),
\]
where, denoting by $\{\nu_{t,z}\}$ any disintegration of $\rho_0$ along $\eta_t$, and $\omega_t(x):=v_t(x)-u_t(\eta_t(x))$ which has zero fibre mean, i.e., $\intr \omega_t\,\nu_{t,z}(\dx)=0$, we set
\bq\label{calK}
\calK_t(z,\zeta) := \intrr   G_p(u_t(\zeta)-u_t(z)+\omega_t(y)-\omega_t(x)) \,\nu_{t,\zeta}(\dy) \nu_{t,z}(\dx) - G_p(u_t(\zeta)-u_t(z)).
\eq

Moreover, $(\rho,u,\tau)$ satisfies the global energy inequality:
for all $t\geq 0$,
\begin{align}\label{ERA_energy}
\begin{aligned}
&\frac12 \intr \left(|u_t|^2+{\rm tr}\,\theta_t(z)\right)\rho_t(\dz)\cr
&\quad +\frac{\kappa}{2} \int_0^t\iiiint_{\R^d \times \R^d \times \R^d \times \R^d} \phi(z-\zeta)\,|v_s(y)-v_s(x)|^p \,\nu_{s,z}(\dx)\nu_{s,\zeta}(\dy) \rho_s(\dz)\rho_s(\rd\zeta)\, \ds \cr
&\qquad \le \frac12\intr |u_0|^2 \,\rho_0(\dz).
\end{aligned}
\end{align}
In the class of Eulerian triples arising from the Lagrangian flow of Theorem \ref{thm:ext}, this solution is unique.
\end{theorem}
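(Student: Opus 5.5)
The proof will be carried out by testing the Lagrangian flow from Theorem \ref{thm:ext} against smooth functions and translating every Lagrangian integral into an Eulerian one via the pushforward and the disintegration $\{\nu_{t,z}\}$. The basic identity is that for every bounded Borel $g$ on $\supp(\rho_0)$,
\[
\int g\,\rho_0(\dx)=\intr\Big(\intr g\,\nu_{t,z}(\dx)\Big)\rho_t(\dz).
\]
Every Eulerian quantity follows from this: $u_t(z)=\int v_t\,\nu_{t,z}$, so the fibre mean of $\omega_t$ vanishes, and $\tau_t$ is the fibre covariance.

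\textbf{Continuity equation.} Fix $\varphi\in C_c^\infty(\R^d)$. Since $\eta-{\rm id}\in C^1([0,\infty);L^\infty(\rho_0))$, differentiation under the integral gives
\[
\frac{\rd}{\dt}\int\varphi(\eta_t)\,\rho_0=\int\nabla\varphi(\eta_t)\cdot v_t\,\rho_0=\intr\nabla\varphi\cdot u_t\,\rho_t .
\]
This is the weak continuity equation. Narrow continuity of $\rho_t$ follows from the $L^\infty$ continuity of $\eta_t$ together with dominated convergence.

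\textbf{Momentum equation.} Differentiate $\int\varphi(\eta_t)\,v_t\,\rho_0$ in time. The transport part is
\[
\int (v_t\otimes v_t)\,\nabla\varphi(\eta_t)\,\rho_0 .
\]
Disintegrating and expanding $v_t=u_t(\eta_t)+\omega_t$, the cross terms vanish because $\omega_t$ has zero fibre mean. What remains is
\[
\intr\nabla\varphi:\big(u_t\otimes u_t+\theta_t\big)\,\rho_t ,
\]
which supplies the flux $\rho u\otimes u+\tau$. For the force term, write $\phi(\eta_t(x)-\eta_t(y))=\phi(z-\zeta)$ on the fibres and disintegrate in both variables $x$ and $y$. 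The term $G_p(u_t(\zeta)-u_t(z)+\omega_t(y)-\omega_t(x))$, averaged over both fibres, equals $G_p(u_t(\zeta)-u_t(z))+\calK_t(z,\zeta)$. This gives $\rho_t(\calA_p+\calR_p)$ tested against $\varphi$.

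When $p=2$, $G_p$ is linear and $\calK_t=0$, so the defect force is genuinely a $p>2$ effect. The weak-$\ast$ continuity of $m_t$ is argued in the same way as for $\rho_t$. The bound $\tau\in L^\infty_{\rm loc}$ follows from $\tr\,\theta_t\le\int|v_t|^2\,\nu_{t,z}\le\|u_0\|_{L^\infty}^2$. At $t=0$, $\eta_0={\rm id}$, so $\nu_{0,z}=\delta_z$ and therefore $\tau_0=0$ and $u_0$ is recovered.

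\textbf{Energy inequality.} The Lagrangian energy identity comes from multiplying the $v$-equation by $v_t$, integrating against $\rho_0$, and symmetrizing using the evenness of $\phi$:
\[
\frac12\int|v_t|^2\rho_0+\frac\kappa2\int_0^t\iint\phi(\eta_s(x)-\eta_s(y))\,|v_s(y)-v_s(x)|^p\,\rho_0\rho_0\,\ds=\frac12\int|u_0|^2\rho_0 .
\]
By the fibre variance decomposition, $\int|v_t|^2\rho_0=\int(|u_t|^2+\tr\,\theta_t)\rho_t$. Disintegrating the dissipation term then yields \eqref{ERA_energy}, in fact with equality.

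\textbf{Uniqueness.} Uniqueness in the stated class follows directly. Theorem \ref{thm:ext} gives a unique Lagrangian flow, and disintegrations are unique $\rho_t$-a.e. Hence $(\rho_t,u_t,\tau_t)$ is determined up to $\rho_t$-null sets.

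\textbf{Main obstacle.} The delicate step is measurability and well-definedness in the joint variables $(t,z)$. Specifically, one has to check that $\calK_t$ and $\theta_t$ do not depend on the choice of disintegration (they are determined $\rho_t\otimes\rho_t$-a.e.), and that they are jointly measurable so the distributional formulation and Fubini in time are legitimate. I would handle this by never using pointwise formulas. Instead, each Eulerian term is defined only through its action on test functions, where it coincides with a Lagrangian integral. For example, $\int\varphi\,\rho_t\calR_p$ is defined as the Lagrangian force integral minus the $\calA_p$ term, both of which are manifestly measurable in $t$ and bounded by $C\|\phi\|_\infty\|u_0\|_\infty^{p-1}$.
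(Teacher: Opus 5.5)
Your proposal is correct and follows essentially the same route as the paper. The common steps are the pushforward and disintegration identities, the fibre decomposition $v_t=u_t\circ\eta_t+\omega_t$ for the quadratic flux and the alignment term, the Lagrangian energy identity combined with the variance decomposition, and uniqueness inherited from the Lagrangian ODE. Your direct double disintegration of the force term, which skips the paper's unnecessary antisymmetrization, is equivalent, and your observation that the energy relation in fact holds with equality is correct.
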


In the classical theory of Euler equations, the appearance of a Reynolds-type stress tensor is reminiscent of classical Euler--Reynolds formulations arising in the analysis of weak limits of nonlinear transport equations. In the context of incompressible and compressible Euler equations, Reynolds stresses typically emerge either as defect measures associated with oscillations and
concentrations in sequences of approximate solutions, or as auxiliary unknowns introduced to relax the Euler system in underdetermined formulations \cite{BDIS15, DS13, DS14, DiP85, LPT94}.
In such settings, the Reynolds stress is not prescribed by the dynamics itself, but reflects a lack of strong compactness or is intentionally retained as a degree of freedom in the construction of weak solutions.

System \eqref{eq:ERA} should be compared with the Eulerian $p$-alignment system  developed in \cite{Tad23}. When $p=2$, \eqref{eq:ERA} coincides with the hydrodynamic $2$-alignment of \cite{Tad23}
expressed in terms of  $(\rho_t,u_t,{\mathbb P}_t)$,
 where ${\mathbb P}_t\succeq 0$  is a pressure tensor corresponding to Reynolds stress $\tau_t$, and initiated with mono-kinetic data, $(\rho_0,u_0,0)$. When $p>2$ there is an additional $p$-moment term (denoted ${\calI}_2$ in \cite[eq. (A.10)]{Tad23} and absorbed into the ``heat'' term ${\mathbf q}_\phi$), which corresponds to the nonlinear defect forcing $\rho_t{\calR}_t[\rho_t,u_t]$. In particular, the global energy inequality \eqref{ERA_energy} is consistent with the notion of dissipative solution (or entropic pressure) of the total energy $E=\frac{1}{2}|u_t|^2 + e_{{\mathbb P}}$ in \cite[eq. (1.12)]{Tad23}, where ${\rm tr}\,\theta_t$ coincides with the \emph{internal energy}, $ {\rm tr}\,\tau_t \rightarrow {\rm tr}\,{\mathbb P}_t =2\rho_t e_{{\mathbb P}}$.
 In this Eulerian framework, however, one lacks a closure for the pressure tensor ${\mathbb P}_t$.   

The ERA system \eqref{eq:ERA} differs in a fundamental way from these Eulerian formulations. The Reynolds stress $\tau_t$ is neither an independent unknown nor a modeling assumption: it is canonically generated by the exact Lagrangian flow through fibrewise velocity fluctuations induced by possible non-injectivity of the flow map. In particular, $\tau_t$ is uniquely determined by the Lagrangian dynamics (up to $\rho_t$-null sets), is nonnegative by construction, and vanishes initially as a direct consequence of $\eta_0={\rm id}$. As a result, the ERA system is \emph{not} underdetermined and requires no additional closure or admissibility criteria at the Eulerian level.  

In this respect, our formulation should also be compared with the recent work \cite{WZpre}, where global measure solutions to a pressureless Euler--alignment system are constructed through a vanishing-viscosity limit of a degenerate Navier--Stokes approximation, and the lack of strong compactness in the convective term is encoded by a matrix-valued concentration defect measure. In the linear case $p=2$, this defect is reminiscent of the Reynolds-type stress $\tau_t$ appearing in \eqref{eq:ERA}. However, the two viewpoints are conceptually different: in \cite{WZpre}, the defect arises as an \emph{a posteriori} weak-limit object associated with the convergence of approximate solutions, whereas in the present work the stress tensor is derived canonically from the exact Lagrangian flow through fibrewise disintegration of the reference measure. In particular, our approach identifies the defect as the fibre variance of the Lagrangian velocity explicitly, showing that it is not an additional unknown but a uniquely induced quantity determined by the underlying Lagrangian dynamics. The measure-theoretic construction of $\tau_t$ is established in Section \ref{ssec:LEc}; in particular, $\tau_t$ is nonnegative and does not depend on the particular representative of the disintegration.
 
This structural distinction shifts the analytical focus from the construction of Reynolds stresses to their \emph{dynamical evolution}. The central question is therefore not how to select $\tau_t$, but under which mechanisms it disappears. In the present framework, the vanishing of $\tau_t$ is tied to the collapse of the fibrewise velocity fluctuations in the disintegration of $\rho_0$ along $\eta_t$. In particular, whenever the flow remains injective, the fibres are singletons and $\tau_t$ vanishes identically. More generally, if the
disintegration is Dirac for almost every time, then $\tau_t=0$ for almost every $t$, and the Eulerian dynamics close to a mono-kinetic Euler--alignment system in the distributional sense. In regimes where alignment mechanisms enforce such injectivity, the Reynolds stress thus represents only a transient manifestation of microscopic velocity fluctuations, rather than a persistent defect or a free relaxation variable. 
 
\begin{remark}For $p=2$, the nonlinear defect force $\calR_p$ vanishes identically.  Indeed, since $G_2$ is linear, we have
\[
\calK_t(z,\zeta) = \intrr (\omega_t(y)-\omega_t(x)) \,\nu_{t,\zeta}(\dy) \nu_{t,z}(\dx) =0,
\]
since the fibre fluctuations satisfy $\intr \omega_t\,\nu_{t,z}(\dx)=0$. Hence, $\calR_2 \equiv 0$ and $(\rho, u, \tau)$ satisfies the Euler--alignment formulation with Reynolds stress:
\begin{align*}
\pa_t\rho_t+\nabla \cdot(\rho_t u_t)&=0, \\
\pa_t(\rho_t u_t)+\nabla \cdot(\rho_t u_t\otimes u_t + \tau_t)
&=\kappa\rho \intr \phi(z-\zeta) (u_t(\zeta)-u_t(z))\,\rho_t(\rd\zeta), 
\end{align*}
in the sense of distributions on $(0,\infty)\times\R^d$. 

In this case, the dissipation also admits a particularly clean Eulerian decomposition.  Writing $v_t(x)=u_t(\eta_t(x))+\omega_t(x)$ with zero fibre mean $\intr \omega_t\,\nu_{t,z}(\dx)=0$, the identity
\[ 
|v_t(x)-v_t(y)|^2 = |u_t(z)-u_t(\zeta)|^2 + 2(u_t(z)-u_t(\zeta)) \cdot(\omega_t(x)-\omega_t(y)) + |\omega_t(x)-\omega_t(y)|^2
\] 
shows that the mixed term vanishes after integrating against $\nu_{t,z}(\dx)\nu_{t,\zeta}(\dy)\rho_t(\dz)\rho_t(\rd\zeta)$.  Hence, the total dissipation splits into the sum of a macroscopic Euler--alignment part,
\[
D_{\rm{EA}}(t) =\kappa \intrr \phi(z-\zeta) |u_t(z)-u_t(\zeta)|^2\,\rho_t(\dz)\rho_t(\rd\zeta),
\]
and a purely microscopic Reynolds contribution,
\[
D_{{\rm Rey}}(t) =\kappa \iiiint_{\R^d \times \R^d \times \R^d \times \R^d} \phi(z-\zeta) |\omega_t(x)-\omega_t(y)|^2\,\nu_{t,z}(\dx)\nu_{t,\zeta}(\dy)\,\rho_t(\dz)\rho_t(\rd\zeta).
\]
Thus, when $p=2$, both the forcing and the dissipation exhibit a clean separation between macroscopic alignment effects and microscopic fibre fluctuations encoded in $\tau_t$. In particular, we have
\[
\frac12 \intr |u_t|^2 \rho_t(\dz) + \frac\kappa2\int_0^t \intrr \phi(z-\zeta)\,|u_t(z)-u_t(\zeta)|^2 \rho_t(\dz)\rho_t(\rd\zeta)\ds \leq \frac12\intr |u_0|^2 \rho_0(\dz).
\]
\end{remark}
 
\begin{remark} 
It is sometimes convenient to lift the Lagrangian dynamics to phase space by considering the pushforward measure
\[
\mu_t := (\eta_t,v_t)_\#\rho_0 \in \calP(\R^d\times\R^d).
\]
By construction, $\rho_t$ is the spatial marginal of $\mu_t$, while its first and second velocity moments recover the barycentric velocity $u_t$ and the Reynolds stress $\tau_t$.

Moreover, $\mu_t$ satisfies a kinetic transport equation associated with the alignment dynamics, namely \eqref{VA}, in the sense of distributions. This phase-space formulation provides a natural measure-valued description of the ERA system.

While we postpone the detailed derivation of this kinetic formulation to Appendix \ref{app:kinetic}, we emphasize that the lifted measures $\mu_t$ will play a crucial role in the subsequent asymptotic closure analysis. Their compactness properties under time translation allow us to extract limiting kinetic measures, whose structure, together with the decay of the velocity diameter, leads to the vanishing of the Reynolds stress and the nonlinear defect force under flocking.
\end{remark}



\subsubsection{Asymptotic closure: from Euler--Reynolds--alignment to Euler--alignment under flocking}
A central issue in the hydrodynamic description of alignment dynamics is whether, and in what sense, the macroscopic equations asymptotically reduce to a mono-kinetic Euler--alignment system.
 The ERA system obtained in Theorem \ref{thm:ERA} is, in general, not closed at the macroscopic level: the momentum balance involves the Reynolds stress $\tau_t$ and, for $p>2$, a genuinely nonlinear defect force $\calR_p[\rho_t,u_t]$. These two terms encode complementary microscopic effects. The tensor $\tau_t$ measures the fibrewise velocity dispersion generated by possible non-injectivity of the Lagrangian flow, while $\calR_p$ quantifies the mismatch between the nonlinear alignment interaction $G_p(\xi)=|\xi|^{p-2}\xi$ and its barycentric approximation.

Our third main result shows that both defect mechanisms are \emph{asymptotically suppressed by velocity flocking alone}. More precisely, we prove that the decay of the velocity diameter $\rd_v(t)$ forces both the Reynolds stress and the nonlinear defect force to vanish as $t\to\infty$, yielding an asymptotic closure of the ERA system to the Eulerian $p$-alignment dynamics \eqref{EA}. Notably, this closure requires no spatial confinement, no global injectivity of the Lagrangian flow map $\eta_t$, and no additional Eulerian regularity assumptions: the sole driving mechanism is the large-time alignment of velocities.

\begin{theorem} \label{thm:asymp_closure}
Let $(\rho_t,u_t,\tau_t)$ be the Eulerian fields associated with a global  Lagrangian solution constructed in Theorem \ref{thm:ext}.  Assume that $\rho_0$ is compactly supported and the communication kernel satisfies the heavy-tail condition \eqref{condi_phi}. Then the following hold.

\medskip
\noindent
\textbf{(Asymptotic closure)} The Reynolds stress vanishes asymptotically:
\[
\|\tau_t\|_{\calM(\R^d;\R^{d\times d})}\to0 \quad\text{as }t\to\infty.
\]
In addition, when $p>2$, the nonlinear defect force also vanishes asymptotically:
\[
\|\rho_t \calR_p[\rho_t,u_t]\|_{\calM(\R^d;\R^d)}\to0 \quad\text{as }t\to\infty.
\]
Consequently, the defect terms in the ERA system vanish as $t\to\infty$, and the dynamics become asymptotically consistent with the mono-kinetic Euler--alignment system \eqref{EA}.

\medskip
\noindent
\textbf{(Asymptotic flocking)} The diameter of Eulerian velocities decays to zero in the essential sense:
\[
\esssup_{(z,\zeta)\in S_t\times S_t}|u_t(z)-u_t(\zeta)|
\to 0 \quad \text{as }t\to\infty, \qquad S_t :=\text{supp}\,\rho_t.
\]

\medskip
\noindent
\textbf{(Asymptotic mono-kinetic dynamics)}
Let $T>0$ and let $\mu_t=(\eta_t,v_t)_\#\rho_0$ denote the lifted kinetic measure. Then there exist a sequence $R_n\to\infty$ and a family of probability measures $\{\rho_t^\ast\}_{t\in[0,T]}\subset\calP(\R^d)$ such that
\[
\mu_{R_n+t} \rightharpoonup \rho_t^\ast(\rd z)\,\delta_{\bar u}(\rd\xi) \quad
\text{in }\calM_{\mathrm{loc}}(\R^d\times\R^d)
\]
for every $t\in[0,T]$, where
\[
\bar u=\intr u_0(x)\rho_0(\rd x).
\]
Moreover,
\[
\rho_t^\ast=(z\mapsto z+t\bar u)_\#\rho_0^\ast,
\]
where $\rho_0^\ast\in\mathcal P(\R^d)$ denotes the weak limit of the spatial marginals at the initial shifted time.
\end{theorem}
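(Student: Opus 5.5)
The whole argument will rest on the qualitative flocking statement of Theorem \ref{thm:ext}: under compact support of $\rho_0$ and the heavy-tail condition \eqref{condi_phi}, $\rd_v(t)\to0$. Two further facts will be used throughout. First, the mean velocity $\bar u=\intr v_t\,\rho_0(\dx)$ is conserved; this follows by integrating the $v$-equation against $\rho_0$ and using the symmetry of $\phi$ together with the oddness of $G_p$. Second, $\rho_0$-a.e. we have $|v_t(x)-\bar u|\le \rd_v(t)$, since $\bar u$ is an average of values $v_t(y)$. Every defect quantity will then be bounded by a power of $\rd_v(t)$.

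\textbf{Asymptotic closure.} The fibre mean $u_t(z)=\intr v_t\,\nu_{t,z}$ is an average of values of $v_t$ over $\rho_0$-a.e. points of the fibre, so $|u_t(\eta_t(x))-\bar u|\le \rd_v(t)$ and $|\omega_t|\le 2\rd_v(t)$ hold $\rho_0$-a.e. (here I use that $\nu_{t,z}\ll\rho_0$ restricted to the fibre for $\rho_t$-a.e. $z$). Since $\tau_t=\rho_t\theta_t$ with $\theta_t\succeq0$, its total variation is controlled by its trace:
\[
\|\tau_t\|_{\calM}\lesssim \intr {\rm tr}\,\theta_t\,\rho_t(\dz)=\intr|\omega_t|^2\rho_0(\dx)\le 4\rd_v(t)^2\to0.
\]
For $\calR_p$, both arguments of $G_p$ in \eqref{calK} are bounded by $C\rd_v(t)$, and $|G_p(\xi)|=|\xi|^{p-1}$. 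Hence $|\calK_t|\le C\rd_v(t)^{p-1}$, and $\|\rho_t\calR_p\|_{\calM}\le C\kappa\|\phi\|_\infty\rd_v(t)^{p-1}\to0$. The Eulerian flocking statement follows the same way. Because $|u_t-\bar u|\le \rd_v(t)$ holds $\rho_t$-a.e., the essential supremum over $S_t\times S_t$ (taken with respect to $\rho_t\otimes\rho_t$) is at most $2\rd_v(t)$. The one point to handle carefully is the a.e. bound on fibre averages, which is a direct consequence of the disintegration identity $\rho_0=\int\nu_{t,z}\,\rho_t(\dz)$.

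\textbf{Mono-kinetic limit.} Let $\hat\rho_t:=(\eta_t-t\bar u)_\#\rho_0$. Since $|\eta_t(x)-\eta_s(x)-(t-s)\bar u|\le\int_s^t\rd_v\to$ small increments, tightness of the shifted frame has to be argued separately. I would not claim uniform tightness of $\rho_t$ itself; the $\calM_{\rm loc}$ convergence allows mass to escape in principle. I therefore plan to use vague compactness. The family $\{\mu_{R+t}\}$ consists of probability measures, hence it is vaguely precompact in $\calM_{\rm loc}$, and its velocity marginals are supported in $\bar B(\bar u,\rd_v(R+t))$. Choose $R_n\to\infty$ such that $\rho_{R_n}\to\rho_0^\ast$ vaguely, the limit being an element of $\calP$ when tightness holds, which I expect from the compact-support bound on $\rd_\eta$ in the uniformly bounded regimes. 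For a test function $\varphi(z,\xi)$, Lipschitz in $z$, we then have
\[
\int\varphi\,\rd\mu_{R_n+t}=\intr\varphi(\eta_{R_n}(x)+t\bar u+e_n(x),\,v_{R_n+t}(x))\,\rho_0(\dx),
\]
where $|e_n|\le t\sup_{s\ge R_n}\rd_v(s)\to0$ and $|v_{R_n+t}-\bar u|\to0$ uniformly on $[0,T]$. This gives the convergence to $\rho_0^\ast$ translated by $t\bar u$, tensored with $\delta_{\bar u}$, uniformly in $t\in[0,T]$.

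\textbf{Main obstacle.} I expect the hardest point to be the justification of the a.e. fibre bounds without assuming that the disintegration is regular. The second delicate point is the compactness or tightness of the spatial marginals along $R_n$. If tightness fails (when $\rd_\eta$ is unbounded for $p\ge3$), I would fall back on vague limits, which is exactly the weaker $\calM_{\rm loc}$ formulation used in the statement.
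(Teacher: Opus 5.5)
\textbf{Closure and Eulerian flocking.} These parts follow the paper's mechanism, in a slightly more direct form. The paper goes through the fluctuation functional $\delE(t)$ and its Eulerian decomposition into $|u_t(z)-u_t(\zeta)|^2+\tr\,\theta_t(z)+\tr\,\theta_t(\zeta)$. You instead bound $\intr\tr\,\theta_t\,\rho_t(\dz)=\intr|\omega_t|^2\rho_0(\dx)\le 4\rd_v^2(t)$ directly, which also yields a rate. Your bounds on $\calK_t$ and on the essential diameter of $u_t$ are the paper's.

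One slip: $\nu_{t,z}\ll\rho_0$ is false in general. For injective $\eta_t$ the fibre measures are Dirac masses. What you actually need, and what the paper uses, is weaker: a fixed $\rho_0$-null set $N$ satisfies $\nu_{t,z}(N)=0$ for $\rho_t$-a.e.\ $z$. This follows from $0=\rho_0(N)=\intr\nu_{t,z}(N)\,\rho_t(\dz)$, as you note at the end of that part.

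\textbf{Mono-kinetic limit: a different route.} The paper proceeds in four steps:
\begin{itemize}
\item it lifts to the kinetic equation and extracts a weak-$\ast$ limit on $[0,T]\times K\times\overline{B(0,M)}$;
\item it upgrades this to every $t$ via equicontinuity in $W^{-1,\infty}$ and Arzel\`a--Ascoli;
\item it shows the velocity marginal collapses to $\delta_{\bar u}$;
\item it identifies $\rho^\ast_t$ by passing to the limit in the continuity equation and solving transport with constant speed $\bar u$.
\end{itemize}
You use the trajectory identity $\eta_{R+t}=\eta_R+t\bar u+e$, with $\|e\|_{L^\infty(\rho_0)}\le t\,\rd_v(R)$ and $\|v_{R+t}-\bar u\|_{L^\infty(\rho_0)}\le\rd_v(R)$. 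Then a single vague extraction $\rho_{R_n}\to\rho^\ast_0$ suffices.

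This is shorter and gives more. The translation structure is read off rather than derived through a limiting PDE and uniqueness for transport. One sequence serves every $T$, and convergence is uniform for $t$ in compacts. The comparison is even quantitative:
$\rd_\infty\bigl(\mu_{R+t},\bigl((z\mapsto z+t\bar u)_\#\rho_R\bigr)\otimes\delta_{\bar u}\bigr)\le(1+t)\rd_v(R)$.
The paper's scheme is the one that would survive without explicit trajectory control, but here that control is available.

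\textbf{Unit mass of $\rho^\ast_0$.} Neither argument delivers $\rho^\ast_0\in\calP(\R^d)$. Your expectation that bounded $\rd_\eta$ yields tightness of $\rho_{R_n}$ is wrong when $\bar u\neq0$. Indeed $\supp\rho_R\subset\overline{B}\bigl(\intr x\,\rho_0(\dx)+R\bar u,\ \rd_\eta(R)\bigr)$, so these measures leave every compact set and the vague limit is $0$. This already happens for $\rho_0=\delta_0$, $u_0\equiv\bar u$. The paper's compactness on $K\times\overline{B(0,M)}$ does not give unit mass either; it is only asserted. So the statement must be read in the co-moving frame, or after the Galilean normalisation $\bar u=0$.

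Your $\hat\rho_t=(\eta_t-t\bar u)_\#\rho_0$ is exactly the right object. It satisfies $\|(\eta_t-t\bar u)-(\eta_s-s\bar u)\|_{L^\infty(\rho_0)}\le\int_s^t\rd_v$. For $p\in[2,3)$ the heavy tail gives the rates \eqref{decay_v_sub3}, so $\rd_v\in L^1(\R_+)$. Hence $\hat\rho_t$ converges in $\rd_\infty$ to a probability measure, with no extraction at all.

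For $p\ge3$, as you anticipated, mass can genuinely escape even when $\bar u=0$. Take $\phi\equiv1$, $p=3$, $\rho_0=\frac12(\delta_a+\delta_b)$ with $a>b$ and $u_0(a)=-u_0(b)=1$. The two atoms separate like $\kappa^{-1}\log(1+2\kappa t)$ about a fixed centre, so the vague limit is again $0$.
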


\begin{remark} The limiting density is transported by the constant velocity $\bar u$ so that
\[
\pa_t\rho_t^\ast+\nabla\cdot(\rho_t^\ast \bar u)=0
\]
holds in the sense of distributions. Moreover, since $\bar u$ is spatially constant, the alignment interaction vanishes identically:
\[
\calA_p[\rho_t^\ast,\bar u]=0.
\]
Hence $(\rho_t^\ast,\bar u)$ is a global distributional solution to the Eulerian $p$-alignment system \eqref{EA}.
\end{remark}

\begin{remark}
The heavy-tail condition \eqref{condi_phi} in Theorem \ref{thm:asymp_closure} is assumed only as a sufficient condition ensuring, through Theorem \ref{thm:ext}, that the associated Lagrangian solution satisfies
\[
\rd_v(t)\to0 \quad\text{as }t\to\infty.
\]
Accordingly, the conclusions of Theorem \ref{thm:asymp_closure} remain valid for any global Lagrangian solution enjoying this asymptotic alignment property.
\end{remark}

\begin{remark}
The asymptotic closure mechanism in Euler $p$-alignment systems was originally developed  in \cite[Corollary 4.2]{Tad23} and \cite[Theorem 4]{Tadpre}. It states that if $(\rho_t,u_t,{\mathbb P}_t)$ is a weak $p$-alignment solution satisfying the dissipative energy inequality and the  heavy-tail condition, then one obtains the asymptotic emergence of monokinetic closure, namely,
\[
\intr \tr \,\mathbb P_t \,\dx \to 0  \quad \text{as }t\to\infty.
\]
The decay of velocity fluctuations in the linear velocity alignment case $p=2$ has been further investigated in \cite{Tadpre} through an entropy-based approach, establishing convergence toward mono-kinetic closure under the structural condition of uniform thickness. While the analysis in \cite{Tad23,Tadpre} focuses on velocity fluctuations of dissipative solutions, our approach is fundamentally different: the closure here is obtained at the level of weak, measure-valued limits induced by the Lagrangian flow. In this sense, Theorem \ref{thm:asymp_closure} provides a complementary mechanism for mono-kinetic emergence, driven purely by alignment, which collapses the fibrewise velocity fluctuations of both $\tau_t$ and $\rho_t\calR_p$ carried by the Lagrangian transport.
\end{remark}

A key structural ingredient in our analysis is the measure-valued compactness framework introduced in \cite{DiP85} and further developed in the kinetic formulation of \cite{LPT94}. In this perspective, weak limits of nonlinear transport dynamics are naturally described by Young measures: possible microscopic oscillations in the velocity variable are encoded by a probability kernel whose barycenter produces the limiting velocity, while residual oscillations appear as a nonnegative covariance tensor of Reynolds type. This barycentric--covariance decomposition is consistent with the structure emphasized in the computational theory of measure-valued solutions developed in \cite{FMT16}. 

In the present setting, this viewpoint is naturally implemented by lifting the Lagrangian dynamics to phase space through an associated kinetic measure $\mu_t$, whose spatial and velocity moments recover the Euler--Reynolds variables. This kinetic representation provides the compactness framework needed to pass to the long-time limit. Under velocity flocking, the associated velocity Young measures collapse to Dirac masses as $t\to\infty$, which forces both the Reynolds stress and the nonlinear defect force to vanish. As a result, any such long-time limit is described by a mono-kinetic Eulerian $p$-alignment state of the form $(\rho^\ast, \bar u)$.



\subsubsection{Global weak solutions to the Euler--alignment system} 
Our next results concern global-in-time weak solutions in the case of linear velocity alignment, that is, $p=2$. In this regime, the ERA system simplifies substantially, since the nonlinear defect force vanishes identically, $\calR_2 \equiv 0$. As a consequence, the only possible obstruction to a closed Eulerian description is the Reynolds stress $\tau_t$, which encodes the loss of injectivity of the Lagrangian flow through fibrewise velocity fluctuations. 
 
Recall that any Lagrangian solution to the alignment dynamics induces, via pushforward and disintegration along the flow map $\eta_t$, an Eulerian triple $(\rho_t,u_t,\tau_t)$ satisfying the
ERA system \eqref{eq:ERA}. If the flow $\eta_t$ remains injective, then the fibres $\eta_t^{-1}(\{z\})$ reduce to singletons and no microscopic velocity fluctuations can occur.
In this case, the Reynolds stress vanishes identically, $\tau_t\equiv0$, and the Eulerian dynamics close to the classical Euler--alignment system
\begin{align}\label{eq:EA-2}
\begin{aligned}
\pa_t\rho_t+\nabla \cdot(\rho_t u_t)&=0, \\
\pa_t(\rho_t u_t)+\nabla \cdot(\rho_t u_t\otimes u_t) &=\kappa \rho_t\intr \phi(z-\zeta)(u_t(\zeta)-u_t(z))\,\rho_t(\rd\zeta).
\end{aligned}
\end{align}
 
We establish global weak solutions to \eqref{eq:EA-2} by exploiting two distinct mechanisms, depending on the spatial dimension. In one spatial dimension, the Lagrangian formulation yields a sharp and complete characterization of injectivity, leading to an exact subcritical--supercritical dichotomy. In higher dimensions, we derive a conditional global existence result based on quantitative control of the Lagrangian deformation.

To state the one-dimensional result, let $\Phi$ be a $C^2$ primitive of the communication kernel $\phi$, and define the effective initial velocity by
\[
\widehat v(x) := u_0(x)-\kappa \int_\R \Phi(y-x)\,\rho_0(\dy).
\]

The following theorem provides a sharp dichotomy between global mono-kinetic Euler--alignment dynamics and the non-injective regime in one dimension.

\begin{theorem}\label{thm:1DEA}
Let $d=1$, and assume $p =2$ and $u_0 \in L^\infty(\rho_0)$. Let $(\rho_t, u_t, \tau_t)$ be the Eulerian variables induced by the Lagrangian solution $(\eta_t,v_t)$ of \eqref{Lag_p} through pushforward and disintegration as in Section \ref{ssec:ERA}. Then the following hold.

\medskip
\noindent\textbf{(Subcritical region: mono-kinetic regime)}  
If the effective velocity $\widehat v$ is non-decreasing, then $(\rho,u) \in C([0,\infty);\calP(\R)) \times L^\infty([0,\infty); L^\infty(\rho))$ is a global weak solution of the mono-kinetic 1D
Euler--alignment system 
\begin{align}\label{1D_EA}
\begin{aligned}
\pa_t \rho_t + \partial_x(\rho_t u_t) &= 0, \\
\pa_t(\rho_t u_t) + \partial_x(\rho_t u_t^2)
&= \kappa\,\rho_t \int_\R \phi(z-\zeta)(u_t(\zeta)-u_t(z))\,\rho_t(\rd\zeta),
\end{aligned}
\end{align}
for all $t\ge0$.

\medskip
\noindent\textbf{(Supercritical region: collision-induced Reynolds regime)} 
If $\widehat v$ is not non-decreasing, then collisions occur in finite time, and the Lagrangian flow loses injectivity. In this regime, the associated Eulerian description is given by the Euler--Reynolds--alignment system of Theorem \ref{thm:ERA}, and a nontrivial Reynolds stress may occur.
\end{theorem}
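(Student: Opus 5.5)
The key observation in one dimension with $p=2$ is that the Lagrangian velocity equation can be integrated once. Since $\pa_t \Phi(\eta_t(y)-\eta_t(x)) = \phi(\eta_t(y)-\eta_t(x))(v_t(y)-v_t(x))$ (using $\Phi'=\phi$ and the evenness of $\phi$), the second equation of \eqref{Lag_p} reads
\[
\pa_t\Big(v_t(x) - \kappa\int_\R \Phi(\eta_t(y)-\eta_t(x))\,\rho_0(\dy)\Big) = 0 .
\]
Hence the quantity $\widehat v_t(x):=v_t(x)-\kappa\int_\R\Phi(\eta_t(y)-\eta_t(x))\rho_0(\dy)$ is conserved along the flow, so $\widehat v_t=\widehat v$. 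The position equation therefore becomes a first-order nonlocal ODE:
\[
\pa_t\eta_t(x)=\widehat v(x)+\kappa\int_\R \Phi(\eta_t(y)-\eta_t(x))\,\rho_0(\dy).
\]

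\textbf{Subcritical case: the flow preserves order.} For $\rho_0$-a.e.\ labels $x<x'$, set $\delta(t):=\eta_t(x')-\eta_t(x)$. Subtracting the two ODEs gives
\[
\dot\delta = \widehat v(x')-\widehat v(x) + \kappa\int_\R\big(\Phi(\eta_t(y)-\eta_t(x'))-\Phi(\eta_t(y)-\eta_t(x))\big)\rho_0(\dy).
\]
Since $\Phi$ is Lipschitz with constant $\|\phi\|_\infty$, the integral is bounded below by $-\kappa\|\phi\|_\infty|\delta|$. If $\widehat v$ is non-decreasing, this yields $\dot\delta\ge -\kappa\|\phi\|_\infty|\delta|$, and a Gr\"onwall argument shows that $\delta(0)>0$ forces $\delta(t)>0$ for all $t$. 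Thus $\eta_t$ is strictly increasing on a full-measure subset of $\supp\rho_0$, and in particular injective $\rho_0$-a.e. The fibres are then singletons, the disintegration is Dirac, $\nu_{t,z}=\delta_{\eta_t^{-1}(z)}$, so $\omega_t=0$ and $\tau_t=0$. With $\calR_2\equiv0$, Theorem \ref{thm:ERA} reduces to \eqref{1D_EA} in the sense of distributions. The regularity $\rho\in C([0,\infty);\calP(\R))$ is inherited from Theorem \ref{thm:ERA}, and $u_t=v_t\circ\eta_t^{-1}$ satisfies $\|u_t\|_{L^\infty(\rho_t)}\le\|u_0\|_{L^\infty(\rho_0)}$ by the maximum principle of Theorem \ref{thm:ext}.

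\textbf{Supercritical case: collisions.} Suppose $\widehat v$ is not non-decreasing, so there exist labels $x<x'$ in $\supp\rho_0$ (taken as Lebesgue/density points of $\rho_0$ to avoid null-set issues) with $\widehat v(x)>\widehat v(x')$. I will show that $\delta(t)$ reaches $0$ in finite time. At a time $t_*$ with $\delta(t_*)=0$ the two ODE right-hand sides differ exactly by $\widehat v(x')-\widehat v(x)<0$, so $\delta$ strictly crosses zero. The trajectories then meet and the map loses injectivity (on sets of positive measure if one works with neighbourhoods of density points). To get the crossing I bound the integral term by $\kappa\|\phi\|_\infty\delta$, which gives $\dot\delta\le -\big(\widehat v(x)-\widehat v(x')\big)+\kappa\|\phi\|_\infty\delta$. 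This only forces $\delta$ to vanish when $\delta(0)$ is small relative to $\big(\widehat v(x)-\widehat v(x')\big)/(\kappa\|\phi\|_\infty)$. A better estimate uses the monotonicity of $\Phi$ (which is non-decreasing, since $\phi\ge0$), yielding
\[
\int_\R\big(\Phi(\eta(y)-\eta(x'))-\Phi(\eta(y)-\eta(x))\big)\,\rho_0(\dy)\le 0
\]
whenever $\delta\ge0$. Then $\dot\delta\le\widehat v(x')-\widehat v(x)<0$, and collision happens by time $\delta(0)/\big(\widehat v(x)-\widehat v(x')\big)$.

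Once a collision occurs, the fibres are non-trivial and we remain in the setting of Theorem \ref{thm:ERA}. A nonzero $\tau_t$ appears whenever colliding labels carry different velocities. The main obstacle I expect is making the measure-theoretic statements rigorous for $u_0$ only in $L^\infty(\rho_0)$: choosing representatives so that the ODE holds for every relevant label, and ensuring the collision affects a set of positive $\rho_0$-measure. This is most delicate when $\rho_0$ has atoms or gaps, where ``non-decreasing'' has to be understood on $\supp\rho_0$ modulo null sets.
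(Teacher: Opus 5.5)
Your proposal is correct and follows essentially the same route as the paper. The shared steps are:
\begin{itemize}
\item the renormalized first-order form \eqref{eq:rLag}, obtained from conservation of $\widehat v$;
\item a Gr\"onwall lower bound $\eta_t(x')-\eta_t(x)\ge (x'-x)e^{-\kappa\|\phi\|_{L^\infty}t}$ for order preservation (the paper uses the mean value theorem where you use the Lipschitz bound on $\Phi$, which makes no difference);
\item the monotonicity of $\Phi$, which forces a collision by time $(x'-x)/(\widehat v(x)-\widehat v(x'))$;
\item in the subcritical case, Theorem \ref{thm:ERA} with $\tau_t\equiv 0$ and $\calR_2\equiv 0$, which gives \eqref{1D_EA}.
\end{itemize}
Your explicit derivation of the conserved quantity and your attention to $\rho_0$-null sets are, if anything, more careful than the paper's presentation in Proposition \ref{prop:inj_reg}.
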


 \begin{remark} 
If $u_0$ is $C^1$ on the support of $\rho_0$, then
\[
\widehat v'(x)=u_0'(x)+\kappa(\phi*\rho_0)(x).
\]
Hence, the condition $\widehat v'(x)\ge0$ coincides exactly with the sharp subcritical threshold for global $C^1$ regularity of the 1D Euler--alignment system established in \cite{CCTT16}. When this condition fails, the Eulerian $C^1$ theory predicts finite-time blow-up of the velocity gradient $\pa_x u$. 

 From the Lagrangian viewpoint, however, this loss of regularity is explained by the loss of injectivity of the characteristic flow. The Lagrangian flow itself remains globally defined, but the associated Eulerian description may develop nontrivial Reynolds defects once different Lagrangian labels reach the same Eulerian position with different velocities. Thus, failure of the subcritical condition does not destroy the global Lagrangian dynamics, but it may prevent mono-kinetic closure at the Eulerian level.
\end{remark}

Global weak solutions to \eqref{eq:EA-2} in one spatial dimension have been constructed in several settings beyond the breakdown of classical solutions, that is, precisely in the \emph{super-critical 1D regime} where collisions occur and the mono-kinetic description can no longer be continued classically. For the Euler--alignment system, a global well-posedness theory for measure-valued weak solutions was established in \cite{LT23} by introducing an entropic selection principle through an associated scalar balance law, together with an approximation by sticky particle Cucker--Smale dynamics. This entropy-based characterization was subsequently complemented by a gradient-flow formulation in \cite{Gal25}, where the same sticky particle dynamics were identified as the unique $L^2$-gradient flow and shown to be equivalent to entropy solutions in one dimension. Related gradient-flow and Lagrangian formulations for pressureless Euler and Euler--Poisson systems were shown to be equivalent to entropy solutions in \cite{CGpre} and references therein, providing canonical continuations beyond collisions.

Theorem \ref{thm:1DEA} provides a complementary perspective in the one-dimensional setting. Rather than selecting a distinguished weak solution among many admissible continuations in the super-critical 1D regime, it identifies the subcritical regime in which the Lagrangian flow remains injective for all times, so that the induced Eulerian dynamics is genuinely mono-kinetic. Outside this regime, collisions may occur, and the Eulerian description is naturally given by the Euler--Reynolds--alignment system.
 
We next turn to the higher-dimensional case, where such a sharp characterization is no longer available. Instead, under a sufficiently large coupling strength, we derive a conditional global existence result based on quantitative control of the Lagrangian deformation.
 
 \begin{theorem} \label{thm:EA_global}
Let $d \geq 1$, and assume $p=2$ and $u_0 \in W^{1,\infty}(\rho_0)$. Let $(\rho_t, u_t)$ be the Eulerian variables induced by the Lagrangian solution $(\eta_t,v_t)$ of \eqref{Lag_p} through pushforward and disintegration as in Section \ref{ssec:ERA}. Suppose that:\newline
(i)   $\rho_0$ is compactly supported;\newline
(ii) $\phi$ satisfies the heavy-tail condition \eqref{condi_phi}; and\newline
(iii)  the coupling strength $\kappa > 0$ is large enough so that
 \begin{equation}\label{eq:large_kappa}
 \kappa >  \frac{\phi(\rd_\eta^\infty)\|\nabla u_0\|_{L^\infty(\rho_0)} + 2\|\nabla\phi\|_{L^\infty} \rd_v(0) }{\phi^2(\rd_\eta^\infty)},
 \end{equation}
where $\rd_\eta^\infty > 0$ is given by the relation \eqref{rel_deta}.\newline
 Then  $(\rho, u)$ is a global-in-time solution to \eqref{eq:EA-2} in the sense of distributions on $(0,\infty)\times\R^d$, with initial data 
\[
(\rho_t, u_t)|_{t=0} = (\rho_0, u_0).
\]
Moreover, we have
\[
\rho \in C([0,\infty);\calP(\R^d))  \quad u \in L^\infty(0,\infty; L^\infty(\rho)).
\]
In the class of Eulerian pairs arising from the Lagrangian flow of Theorem \ref{thm:ext}, this solution is unique.
\end{theorem}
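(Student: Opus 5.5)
The plan is to show that under \eqref{eq:large_kappa} the Lagrangian flow $\eta_t$ stays injective, with quantitative control, for all $t\ge0$. Once this holds, the fibres $\eta_t^{-1}(\{z\})$ are $\rho_t$-a.e. singletons, the disintegration is Dirac, and $\theta_t\equiv0$. Since $p=2$ gives $\calR_2\equiv0$, Theorem \ref{thm:ERA} then reduces directly to \eqref{eq:EA-2}, together with the stated regularity and uniqueness. It therefore suffices to prove a uniform lower Lipschitz bound
\[
|\eta_t(x)-\eta_t(y)|\ge c\,|x-y|,\qquad c>0,
\]
for $\rho_0$-a.e. $x,y$ and all $t\ge0$.

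\textbf{Step 1: the a priori bounds.} First I will use the hypotheses (i)--(ii) with Theorem \ref{thm:ext} and the $p=2$ case of \eqref{eq:conditional-flocking}--\eqref{decay_v_sub3}. The heavy tail makes \eqref{rel_deta} solvable for every $\rd_v(0)$, which gives
\[
\rd_\eta(t)\le \rd_\eta^\infty,\qquad \rd_v(t)\le \rd_v(0)e^{-\kappa\phi(\rd_\eta^\infty)t}.
\]
Consequently $\phi(\eta_t(x)-\eta_t(y))\ge\phi(\rd_\eta^\infty)=:\phi_\infty$ on the support, by monotonicity of $\phi$.

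\textbf{Step 2: propagating $W^{1,\infty}$ control.} Since $u_0\in W^{1,\infty}(\rho_0)$, I will argue on the Lagrangian difference quotients rather than on derivatives, which the paper avoids. Set $\delta\eta=\eta_t(x)-\eta_t(y)$ and $\delta v=v_t(x)-v_t(y)$, and write the alignment term as $\kappa(\phi*\ldots)$. Then
\[
\pa_t\delta v=-\kappa\,(\phi\star\rho_0)(x)\,\delta v+\kappa\int\bigl[\phi(\eta_t(x)-\eta_t(w))-\phi(\eta_t(y)-\eta_t(w))\bigr](v_t(w)-v_t(y))\,\rho_0(\rd w),
\]
where $(\phi\star\rho_0)(x):=\int\phi(\eta_t(x)-\eta_t(w))\rho_0(\rd w)\ge\phi_\infty$. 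The bracket is bounded by $\|\nabla\phi\|_{L^\infty}|\delta\eta|$ and the last factor by $\rd_v(t)$. This yields the differential inequality
\[
\frac{\rd}{\dt}|\delta v|\le-\kappa\phi_\infty|\delta v|+\kappa\|\nabla\phi\|_{L^\infty}\rd_v(t)\,|\delta\eta|.
\]

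\textbf{Step 3: closing a bootstrap (the main obstacle).} I will combine $\frac{\rd}{\dt}|\delta\eta|\ge-|\delta v|$ with Step 2 to obtain a lower bound on $|\delta\eta|/|x-y|$. The cleanest route is to consider the ratio $r(t)=|\delta v|/|\delta\eta|$, normalized by $r(0)\le\|\nabla u_0\|_{L^\infty(\rho_0)}$, which satisfies
\[
r'\le -\kappa\phi_\infty r+r^2+\kappa\|\nabla\phi\|_{L^\infty}\rd_v(0).
\]
A Riccati comparison shows that $r$ stays below a fixed root $r_\ast<\kappa\phi_\infty$ as long as the discriminant-type condition holds and $r(0)<r_\ast$. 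I expect exactly this condition to be \eqref{eq:large_kappa}, up to the factor $2$ coming from the two directions of $|\nabla\phi|$ or symmetrization. The delicate part is choosing the comparison and the factor of $\phi_\infty$ so that they match \eqref{eq:large_kappa}. If the Riccati form does not match, my fallback is the cruder bound $|\delta v|\le(\|\nabla u_0\|+\ldots)e^{-\kappa\phi_\infty t}|x-y|$ inside a continuity argument. Either way, $\int_0^\infty r\,\ds<\infty$, or at least $\int_0^t r\,\ds$ is controlled, so
\[
|\delta\eta(t)|\ge|x-y|\exp\Bigl(-\int_0^t r\,\ds\Bigr).
\]
Injectivity then follows once this is made uniform and $r$ decays exponentially, which holds since $\rd_v$ decays exponentially.

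\textbf{Step 4: passing to the Eulerian statement.} With injectivity established, $\nu_{t,z}=\delta_{\eta_t^{-1}(z)}$, so $u_t=v_t\circ\eta_t^{-1}$ and $\tau_t=0$. Theorem \ref{thm:ERA} then gives the distributional form of \eqref{eq:EA-2}, the initial data, $\rho\in C([0,\infty);\calP(\R^d))$, and the bound $\|u_t\|_{L^\infty(\rho_t)}\le\|u_0\|_{L^\infty(\rho_0)}$. Uniqueness within the class is inherited from the uniqueness of the Lagrangian flow.
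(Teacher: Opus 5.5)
\textbf{Gap in Step 3.} Your Step 2 is correct, and pairwise difference quotients are a sensible variant of the paper's gradient estimate. The gap is in Step 3, which is the step that has to produce \eqref{eq:large_kappa}, and your main route does not. Set $a:=\kappa\phi(\rd_\eta^\infty)$, $b:=\kappa\|\nabla\phi\|_{L^\infty}\rd_v(0)$ and $L:=\|\nabla u_0\|_{L^\infty(\rho_0)}$.

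Once you freeze $\rd_v(t)\le\rd_v(0)$, the comparison $r'\le r^2-ar+b$ keeps $r$ bounded only if $r^2-ar+b$ has a real root above $r(0)$. That requires $a^2\ge 4b$, i.e. $\kappa\phi^2(\rd_\eta^\infty)\ge 4\|\nabla\phi\|_{L^\infty}\rd_v(0)$. If $a^2<4b$, the right-hand side is at least $b-a^2/4>0$, the comparison solution blows up in finite time, and you get no global bound.

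But \eqref{eq:large_kappa} is exactly $a^2>aL+2b$, and this does not imply $a^2\ge 4b$ when $\phi(\rd_\eta^\infty)L<2\|\nabla\phi\|_{L^\infty}\rd_v(0)$. Take $\kappa\phi^2(\rd_\eta^\infty)$ strictly between $\phi(\rd_\eta^\infty)L+2\|\nabla\phi\|_{L^\infty}\rd_v(0)$ and $4\|\nabla\phi\|_{L^\infty}\rd_v(0)$. Then the hypothesis holds, but your Riccati argument only gives injectivity up to a finite time. The factor $2$ in \eqref{eq:large_kappa} does not come from symmetrization; it comes from $f(0)=1$ together with the $-b$ in $a^2-b$ below.

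Your closing sentence is also circular. The decay of $\rd_v$ controls only the numerator of $r=|\delta v|/|\delta\eta|$, and a lower bound on $|\delta\eta|$ is exactly what has to be proved. Your fallback points in the right direction, but it is left unspecified, and the bootstrap hypothesis and constants are precisely what decide whether you land on \eqref{eq:large_kappa}.

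\textbf{How to repair it.} Keep the decay $\rd_v(t)\le\rd_v(0)e^{-at}$ (Remark \ref{rmk:p-sub3} with $p=2$) in the source term, and do not divide by $|\delta\eta|$. Set $f:=|\delta\eta|/|x-y|$ and $g:=|\delta v|/|x-y|$. Your Step 2 then gives $f'\le g$ and $g'\le -ag+be^{-at}f$, with $f(0)=1$ and $g(0)\le L$. This is exactly the system of Lemma \ref{lem:tech}, which yields $\int_0^\infty g(s)\,\ds\le (aL+b)/(a^2-b)$.

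Alternatively, bootstrap on $f\le 2$. This gives $g(t)\le (L+2bt)e^{-at}$, hence $\int_0^\infty g(s)\,\ds\le L/a+2b/a^2$. Then $f\le 1+L/a+2b/a^2<2$, which closes the bootstrap. Either bound is $<1$ precisely when $a^2>aL+2b$, i.e. under \eqref{eq:large_kappa}. It follows that $|\delta\eta(t)|\ge |x-y|\bigl(1-\int_0^\infty g(s)\,\ds\bigr)>0$ for all $t$.

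This is the paper's proof done pairwise. The paper runs Lemma \ref{lem:uni_g} on $\|\nabla\eta_t\|_{L^\infty(\rho_0)}$ and $\|\nabla v_t\|_{L^\infty(\rho_0)}$, then deduces injectivity from $\|\int_0^t\nabla v_s\,\ds\|_{L^\infty(\rho_0)}<1$. The pairwise form gives injectivity directly, without the implicit segment argument on $\supp\rho_0$. This requires reading $L$ as the Lipschitz constant of $u_0$ on $\supp\rho_0$, which your normalization $r(0)\le L$ already assumes.
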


\begin{remark}
If $\phi$ is integrable, then the condition \eqref{eq:large_kappa} should be modified to include  the effect of its ``short tail''
\[
\kappa > \max\lt\{\frac{\rd_v(0)}{\int_{\rd_\eta(0)}^\infty \phi(r)\,\dr}, \, \frac{\phi(\rd_\eta^\infty)\|\nabla u_0\|_{L^\infty(\rho_0)} + 2\|\nabla\phi\|_{L^\infty} \rd_v(0) }{\phi^2(\rd_\eta^\infty)} \rt\}.
\]
\end{remark}

 \begin{remark} 
The global result of Theorem \ref{thm:EA_global} is specific to the case of linear velocity alignment $p=2$. Nevertheless, for general nonlinear couplings $p\ge2$ and initial data
$u_0\in W^{1,\infty}(\rho_0)$, the Lagrangian formulation developed in this work still yields local-in-time weak solutions to the Eulerian $p$-alignment system, as stated in Theorem \ref{thm:EA_local}.

The restriction to $p=2$ in the global existence theory is not due to a lack of well-posedness of the underlying Lagrangian dynamics, but rather to the difficulty of obtaining global-in-time control of the velocity gradient. In the linear case, the gradient system closes with a constant damping rate and an integrable source term, which allows us to prove that
\[
\int_0^\infty \|\nabla v_t\|_{L^\infty(\rho_0)}\,\dt < \infty
\]
under a sufficiently large coupling strength, and hence to enforce global injectivity of the flow.

By contrast, for nonlinear velocity couplings $p\in(2,3)$, although flocking and algebraic decay of the velocity diameter still hold, the available estimates do not provide a uniform lower bound on the effective alignment strength. As a consequence, the gradient system does not admit a time-integrable damping structure, and the global-in-time injectivity of the flow remains open in this regime. A detailed discussion of this obstruction is given in Remark \ref{rem:p23} below.
\end{remark}

At the level of strong solutions, multi-dimensional  Euler alignment was recently treated in \cite[Theorem 3]{Tadpre}, under the assumption of limited initial velocity fluctuations $\rd_v(0)$,
\begin{subequations}\label{eqs:Tadpre}
\begin{equation}\label{eq:Tadpre_a}
8\|\nabla\phi\|_{L^\infty}\rd_v(0) \leq \kappa \phi^2(\rd_\eta^\infty),
\end{equation}
and for sub-critical initial data satisfying 
\begin{equation}\label{eq:Tadpre_b}
\lambda_{\textnormal{min}}(\nabla_{\!{}_S}u_0) + \kappa \phi(\rd_\eta^\infty)  > 0,
\end{equation}
\end{subequations}
where $\nabla_{\!{}_S}$ denotes the symmetric part of the velocity gradient. Theorem \ref{thm:EA_global} extends this strong existence result to the weak regime, under the (slightly) stronger \eqref{eq:large_kappa} compared with \eqref{eqs:Tadpre}.

In higher dimensions, the theory of global weak solutions for the Euler--alignment system remains limited outside regimes where strong structural or dissipative effects are present. A notable structural setting is provided by the unidirectional velocity framework, in which global measure-valued and weak solutions in arbitrary spatial dimensions were established in \cite{LLST22}, allowing for the formation of mass concentrations. There, the dynamics are governed by the scalar quantity $e=\nabla \cdot u+\phi*\rho$, and the analysis yields a refined geometric description of concentration phenomena through the pushforward of singular measures along the limiting Lagrangian flow.

Outside the unidirectional setting, global weak solution theories in multiple spatial dimensions remain extremely limited. To the best of our knowledge, the only available results concern measure-valued solutions constructed under strong singularity assumptions on the communication kernel. For linear velocity alignment ($p=2$), global measure-valued solutions were obtained in \cite{FP24} under the assumption that the singularity exponent exceeds the spatial dimension, exploiting the regularizing effect induced by the singular alignment force. This approach was subsequently extended to nonlinear velocity couplings ($p\neq 2$) in \cite{CFP25}, again in a strongly singular regime, where singular dissipation plays a crucial role in suppressing velocity dispersion and ensuring compactness.
 
By contrast, Theorem \ref{thm:EA_global} establishes global weak solutions for bounded and Lipschitz communication kernels without relying on singular dissipation mechanisms. Instead of constructing solutions via compactness arguments, we start from the globally well-posed Lagrangian dynamics and obtain a quantitative control of the Lagrangian deformation under a sufficiently large coupling strength. This control enforces global injectivity of the flow map $\eta_t$, and hence the Reynolds stress vanishes identically, $\tau_t\equiv0$. As a consequence, the ERA system closes globally in time and yields a global-in-time distributional solution to the Euler--alignment system.

Taken together, Theorems \ref{thm:1DEA} and \ref{thm:EA_global} place the Euler--alignment system with regular kernels into a unified framework that connects sharp one-dimensional thresholds, global weak solvability, and the structural role of Lagrangian injectivity across dimensions. They complement entropy-based one-dimensional theories and kinetic approaches for singular interactions, while providing a transparent dynamical interpretation of Reynolds defects and their disappearance.


 
\subsubsection{Uniform-in-time mean-field limit and Euler--alignment}\label{sssec:mf}
Our final result concerns a uniform-in-time quantitative mean-field limit for the $N$-particle Cucker--Smale system \eqref{CS} in the case of linear velocity alignment, that is, $p = 2$, formulated in phase space and measured in Wasserstein distance. More precisely, we establish a stability estimate that holds uniformly for all $t \geq 0$ and is independent of the number of particles. Under additional structural conditions on the limiting dynamics, this phase-space convergence can be further reduced to a mono-kinetic Eulerian description, yielding the Euler--alignment system \eqref{eq:EA-2}.

To describe the limiting dynamics, let $(\eta_t,v_t)$ denote the global Lagrangian solution of the alignment system \eqref{Lag_p} with $p=2$ provided by Theorem \ref{thm:ext}, and let $(\rho_t,u_t)$ be the associated Eulerian pair constructed in Theorem \ref{thm:ERA}. The Lagrangian formulation plays a central role in our analysis, as it provides a natural reference dynamics against which the particle system can be compared at all times.

We begin by studying a uniform-in-time mean-field limit from the particle system \eqref{CS} to the limiting Lagrangian dynamics \eqref{Lag_p}. This first step is naturally formulated at the level of trajectories. From a broader conceptual viewpoint, our approach is rooted in the classical deterministic coupling method developed in \cite{BH77, Dob79, Neu84}  for the derivation of mean-field limits in kinetic theory. Under suitable smoothness assumptions on the interaction kernel, these works establish stability estimates for particle approximations of general initial measures, typically measured in bounded Lipschitz or Wasserstein-type distances; see also the reviews \cite{CCH14b, Jab14, Spo91} and references therein. The stochastic extensions of this framework, relevant in the presence of diffusion or noise, were developed in \cite{Mel96, Szn91}. Uniform-in-time propagation of chaos implies that the continuum model describes the behavior of the particle system at all time scales with respect to the number of particles, results that are important in different contexts, see for instance \cite{DGPS} and the references therein.

In the present work, we remain entirely within a deterministic setting and follow this classical trajectory-based philosophy. We directly compare the $N$-particle Cucker--Smale dynamics with the limiting Lagrangian flow associated with the Vlasov--alignment equation. This viewpoint is particularly natural in the mono-kinetic regime relevant to pressureless Euler-type limits, where the macroscopic dynamics is most transparently described through Lagrangian characteristics.

The modulated Wasserstein quantities introduced below provide a quantitative measure of the discrepancy between particle trajectories and the limiting Lagrangian flow. They play the role of a deterministic coupling error and allow us to establish stability estimates that are uniform in time and independent of
the number of particles. In this sense, our analysis can be viewed as a Lagrangian, characteristic-based mean-field stability theory tailored to alignment dynamics and Eulerian closure.
 
With this preparation, we now turn to the quantitative comparison between the particle system and the limiting Lagrangian dynamics. To this end, we introduce suitable modulated quantities measuring the discrepancy in phase space between the empirical measure $\mu_t^N$ and the reference Lagrangian flow. These quantities will later allow us to control Wasserstein distances between the particle system and its macroscopic limit.

Let $q\ge1$ and define the modulated energies
\begin{align*}
\mathscr{E}_q(X^N|\eta)(t)  &:= \lt( \frac1N\sum_{i=1}^N \intr |x_i(t) - \eta_t(x)|^q \,\rho_0(\dx)\rt)^\frac1q, \cr
\mathscr{E}_q(V^N|v)(t) &:= \lt(\frac1N\sum_{i=1}^N \intr |v_i(t) - v_t(x)|^q \,\rho_0(\dx)\rt)^\frac1q.
\end{align*}
Here the integration with respect to $\rho_0$ reflects the comparison of each particle trajectory with the entire reference Lagrangian flow, in the spirit of a deterministic coupling.

Finally, without loss of generality, we may assume that the total momentum is matched at $t=0$:
\[
\frac1N \sum_{i=1}^N v_i(0) = \intr  u_0\,\rho_0(\dx), 
\]
so that by conservation of momentum for the particle system and the Lagrangian model, 
\[
\frac1N \sum_{i=1}^N v_i(t) = \intr v_t(x)\,\rho_0(\dx), \quad \forall\, t \geq 0. 
\]

The following theorem provides a uniform-in-time stability estimate for the modulated Wasserstein quantities introduced above, comparing the $N$-particle Cucker--Smale dynamics with the limiting Lagrangian alignment flow.

\begin{theorem}\label{thm:deri_LA} 
 Let $p=2$ and $q \in [1,\infty]$. Let $\{(x_i,v_i)\}_{i=1}^N$ be a global classical solution to \eqref{CS} satisfying
\[
\rd_{V^N}\in L^1(\R_+), \quad \rd_{V^N}(t):=\max_{1\leq i,j \leq N}|v_i(t)-v_j(t)|.
\]
Let $(\eta, v)$ be a global solution to the system \eqref{Lag_p} satisfying
\[
\sup_{t \ge 0}\rd_\eta(t) <\infty.
\]
Then there exists a constant $C>0$, independent of $N$, $q$, and $t$, such that:
\[
\sup_{t \ge 0}\lt( \mathscr{E}_q(X^N|\eta)(t)+\mathscr{E}_q(V^N|v)(t)\rt) \le C\lt(\mathscr{E}_q(X^N|\eta)(0)+\mathscr{E}_q(V^N|v)(0)\rt),
\]
and moreover
\[
\mathscr{E}_q(V^N|v)(t) \to 0 \quad \text{as } t \to \infty.
\]
\end{theorem}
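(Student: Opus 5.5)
We compare each particle $i$ with each Lagrangian label $x$ through the pointwise discrepancies
\[
X_i(t,x):=x_i(t)-\eta_t(x),\qquad W_i(t,x):=v_i(t)-v_t(x).
\]
We then estimate these $\rho_0\otimes\frac1N\sum_i$-averaged quantities in $L^q$. Let $D:=\sup_t \rd_\eta(t)$ and $\phi_*:=\phi(D + \sup_t \rd_{X^N}(t))$. Since $\rd_{V^N}\in L^1$, the particle spatial diameter stays bounded, so $\phi_*>0$ gives a uniform lower bound on the kernel along both dynamics and across them.

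The time derivative of $W_i$ splits into two terms:
\[
\pa_t W_i(t,x)=\frac\kappa N\sum_j\intr\Big[\phi(x_j-x_i)(v_j-v_i)-\phi(\eta_t(y)-\eta_t(x))(v_t(y)-v_t(x))\Big]\rho_0(\dy).
\]
Here I have used $\rho_0(\dy)$ as a probability measure against the index $j$ as well. This is the coupling trick: both sums can be written as averages over $(j,y)$. I add and subtract $\phi(x_j-x_i)(v_t(y)-v_t(x))$.

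\begin{itemize}
\item The first piece is $\frac\kappa N\sum_j\int\phi(x_j-x_i)(W_j(y)-W_i(x))\,\rho_0(\dy)$. This is a dissipative alignment term with coefficients bounded below by $\kappa\phi_*$. It is dissipative only on fluctuations, because momentum conservation kills the mean.
\item The second piece is bounded by $\kappa\|\nabla\phi\|_{L^\infty}\,|X_j(y)-X_i(x)|\,|v_t(y)-v_t(x)|\le 2\kappa\|\nabla\phi\|_{L^\infty}\rd_v(t)\,(|X_j(y)|+|X_i(x)|)$.
\end{itemize}

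The key step is to use the mean-zero structure. The averages $\frac1N\sum_j v_j$ and $\int v_t\,\rho_0$ coincide, so $\bar W:=\frac1N\sum_j\int W_j\,\rho_0=0$. The dissipative term therefore drives $W_i(x)$ toward $0$ at rate at least $\kappa\phi_*$, up to an error controlled by the spread of $W$. That spread is itself bounded by $\rd_{V^N}(t)+\rd_v(t)$.

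I would make this precise using the $q=\infty$ version first: take the index and label that attain the maximal $|W_i(x)|$, and use convexity of the alignment sum to get
\[
\frac{\rd}{\rd t}\|W\|_{L^\infty}\le -\kappa\phi_*\|W\|_{L^\infty}+C\big(\rd_v(t)+\rd_{V^N}(t)\big)\,\|X\|_{L^\infty}\cdot(\text{Lipschitz}).
\]
For finite $q$, I multiply by $|W_i|^{q-2}W_i$, average, and symmetrize $(i,x)\leftrightarrow(j,y)$. The cross term then becomes nonpositive, up to the $\phi$-variation error. This yields $\frac{\rd}{\rd t}\mathscr E_q(V^N|v)\le C(\rd_v+\rd_{V^N})\,\mathscr E_q(X^N|\eta)$, plus a damping term. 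The constants involve only $\kappa$, $\phi_*$ and $\|\nabla\phi\|_{L^\infty}$, so they are independent of $q$ and $N$.

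For positions, $\frac{\rd}{\rd t}\mathscr E_q(X^N|\eta)\le\mathscr E_q(V^N|v)$. Setting $A(t)=\mathscr E_q(X^N|\eta)+\mathscr E_q(V^N|v)$, the two inequalities do not close with the naive bound, because $\int_0^\infty\mathscr E_q(V^N|v)$ need not be small. The main obstacle is therefore to exploit that $W$ has zero mean, so its size is controlled by fluctuations.

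Concretely, I split $W_i(x)=(v_i-\bar v)-(v_t(x)-\bar v)$ with the common mean $\bar v$. Then $\|W\|_{L^\infty}\le \rd_{V^N}(t)+\rd_v(t)$, which is integrable in time by hypothesis and by the exponential decay \eqref{decay_v_sub3} for $p=2$. This gives the uniform bound on $\mathscr E_q(X^N|\eta)$ in terms of $A(0)$.

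To obtain the linear bound $CA(0)$ rather than an additive one, I combine this with Grönwall on
\[
A'\le -\kappa\phi_*\,\mathscr E_q(V^N|v)+C(\rd_v+\rd_{V^N})A,
\]
applied to the weighted quantity $\mathscr E_q(X^N|\eta)+\lambda\mathscr E_q(V^N|v)$ with $\lambda$ large. The time integrability of $\rd_v+\rd_{V^N}$ then gives $\sup_t A(t)\le e^{C\int_0^\infty(\rd_v+\rd_{V^N})}A(0)$.

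Finally, $\mathscr E_q(V^N|v)(t)\le\rd_{V^N}(t)+\rd_v(t)\to0$, which gives the asymptotic statement.
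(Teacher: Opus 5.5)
Your argument is correct, but it uses the ``dual'' splitting of the interaction difference. The paper adds and subtracts so that the Lagrangian kernel $\phi(\eta_t(x)-\eta_t(y))\ge\phi(\sup_t\rd_\eta(t))$ supplies the damping, while the particle difference $v_j-v_i$, bounded by $\rd_{V^N}$, multiplies the kernel variation. The two hypotheses then enter verbatim in Lemma~\ref{lem1} as $c_0=\kappa\phi(\sup_t\rd_\eta)$ and $h=\kappa\|\phi\|_{\rm Lip}\rd_{V^N}\in L^1$.

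You put the particle kernel on the damping term and $\rd_v(t)$ on the error, so you first have to convert the hypotheses. You use $\sup_t\rd_{X^N}(t)\le\rd_{X^N}(0)+\|\rd_{V^N}\|_{L^1}$ and $\rd_v(t)\le\rd_v(0)e^{-\kappa\phi(D)t}$ with $D=\sup_t\rd_\eta(t)$. The latter comes straight from \eqref{eq:dia_v}; cite that rather than \eqref{decay_v_sub3}, which is stated under the threshold condition.

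This changes which constant is particle-independent. Your error coefficient has an $N$-independent $L^1$ norm, but your damping rate now depends on $\rd_{X^N}(0)$. The paper's damping rate is $N$-independent, and its $N$-dependence sits only in $\|\rd_{V^N}\|_{L^1}$. In either version, ``$C$ independent of $N$'' tacitly requires these particle quantities to be bounded uniformly in $N$.

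For the uniform bound, your weighted functional with $\lambda\ge1/(\kappa\phi_*)$ is exactly the Lyapunov functional $f+g/c_0$ of Lemma~\ref{lem1}, and it closes on its own. Two small corrections: the displayed inequality for the unweighted $A$ is not right as written, since the position equation contributes $+\mathscr{E}_q(V^N|v)$. Also, the additive bound obtained from the zero-mean observation is not needed at that step.

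Where you genuinely simplify is the decay. Instead of the auxiliary function $\ell(t)=g(t)+\int_t^\infty hf\,\ds$, momentum matching gives directly $|v_i(t)-v_t(x)|\le\rd_{V^N}(t)+\rd_v(t)$ for all $i$ and $\rho_0$-a.e.\ $x$, which even yields a rate. You should add why $\rd_{V^N}(t)\to0$: integrability alone does not force this, but $\rd_{V^N}$ is nonincreasing for \eqref{CS} by the maximum principle, and that closes the gap.
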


\begin{remark}  
In Theorem \ref{thm:deri_LA}, the assumptions $p=2$,
\bq\label{asp0}
\rd_{V^N} \in L^1(\R_+), \quad\text{and}\quad \sup_{t\ge0} \rd_\eta(t)<\infty
\eq
ensure a uniform (in time and $N$) Gr\"onwall-type estimate. If these assumptions are not imposed, the above argument still yields a stability bound of the form
\[
\mathscr{E}_2(X^N|\eta)(t)+\mathscr{E}_2(V^N|v)(t)  \le C(t)\lt(\mathscr{E}_2(X^N|\eta)(0)+\mathscr{E}_2(V^N|v)(0)\rt),
\]
but the constant $C(t)$ generally depends on time. In particular, for general $p\ge2$, Appendix \ref{app:gene_mf} establishes such a finite-time stability estimate and the corresponding mean-field convergence toward the Lagrangian/kinetic alignment dynamics. Thus, the assumptions in Theorem \ref{thm:deri_LA} are precisely those that upgrade finite-time mean-field stability to a \emph{uniform-in-time} estimate.

On the other hand, when both the particle system and the Lagrangian limit model exhibit flocking, the assumptions \eqref{asp0} are automatically satisfied, and the stability estimate holds uniformly in time.
\end{remark}

The stability estimate established in Theorem \ref{thm:deri_LA} provides a uniform-in-time control on the discrepancy between the $N$-particle dynamics and the limiting nonlinear Lagrangian flow. Among rigorous mean-field analyses of alignment models, a closely related framework is due to \cite{CC21}, where a general modulated-energy method is developed to pass from Newtonian particle systems with alignment interactions (possibly combined with damping and external or interparticle potentials) to pressureless Euler-type models with nonlocal dissipation.
Their approach builds on the idea of measuring the discrepancy between the particle system and a macroscopic velocity field through a suitable modulated kinetic energy.

In the case where only alignment acts, namely in the absence of damping and external or interparticle potentials, the central quantity in \cite{CC21} is the discrete modulated kinetic energy
\[
\calE_N(Z^N(t) | U(t)) = \frac1{2N}\sum_{i=1}^N |v_i(t) - u_t(x_i(t))|^2 = \frac12 \intrr |v - u_t(x)|^2 \mu^N_t(\dx,\dv)
\]
which originates from the modulated energy concept introduced in \cite{Bre00, BMNP03, Mas01, Ser20}.

This quantity provides a metric-like control between the particle configuration $(x_i(t),v_i(t))_{i=1}^N$ and the macroscopic velocity field $u_t$, even in the absence of a convex pressure potential, as is typical in the pressureless Euler regime.

Restricting to alignment-only dynamics, \cite{CC21} derives a differential inequality of the form
\[
\frac{\rd}{\dt}\calE_N(t) + (\mbox{alignment dissipation}) \leq C \calE_N(t) + C{\rd}_{\rm BL}^2(\rho^N_t, \rho_t), \quad \rho^N_t = \pi_x {}_\# \mu^N_t,
\]
together with a transport inequality which plays a crucial role in the pressureless setting. Indeed, in the absence of any pressure term, there is no direct coercive control on the density $\rho_t$.
The transport structure of the dynamics, however, allows one to show that the discrepancy between the particle density $\rho_t^N$ and its macroscopic counterpart $\rho_t$ can still be controlled in terms of the modulated kinetic energy \cite{Cho21}:
\[
{\rd}^2_{\rm BL}(\rho^N_t, \rho_t) \leq C {\rd}_{\rm BL}^2(\rho^N_0, \rho_0) + C \int_0^t \calE_N(s)\,\ds,
\]
where ${\rd}_{\rm BL}$ denotes the bounded Lipschitz distance. Here the constant $C>0$ depends on $\|\nabla u\|_{L^\infty}$ and on the final time horizon $T$. A Gr\"onwall argument then yields convergence toward mono-kinetic macroscopic dynamics on finite time intervals, with constants that grow with
$\|u\|_{L^\infty(0,T;W^{1,\infty})}$ and with $T$.
In particular, one obtains
\begin{align*}
{\rd}_{\rm BL}^2(\mu^N_t, \rho_t \otimes \delta_{u_t}) 
&\leq C\calE_N(Z^N(0) | U(0)) + C {\rd}^2_{\rm BL}(\rho^N_0, \rho_0),
\end{align*}
for some $C>0$ independent of $N$, but depending implicitly on $T$. 

By contrast, the approach developed in the present work is formulated purely in Wasserstein geometry and follows an intrinsically Lagrangian perspective. Rather than comparing the particle system directly with an Eulerian velocity field, we measure the discrepancy between the particle dynamics and the limiting alignment flow at the level of characteristics. This viewpoint allows us to bypass any reliance on a priori $W^{1,\infty}$ bounds for the Eulerian velocity $u$ and to exploit instead the stability properties of the Lagrangian dynamics.

A further distinction with respect to \cite{CC21} lies in the choice of metric used to quantify convergence. The analysis in \cite{CC21} is formulated in terms of the bounded Lipschitz distance, which, on bounded domains, is equivalent to the $1$-Wasserstein distance and therefore captures only first-order transport effects. By contrast, the present framework relies directly on Wasserstein distances of higher order. In the case of linear velocity coupling ($p=2$), the modulated estimates can be performed at arbitrary Wasserstein orders, yielding a significantly stronger notion of convergence. In particular, this allows for quantitative control of higher-order moments as well as of the maximal displacement between the particle system and the limiting dynamics, uniformly in time. Such a strengthening of the convergence topology is not accessible within the bounded Lipschitz framework.

To turn the uniform-in-time stability bound of Theorem \ref{thm:deri_LA} into a mean-field convergence statement for empirical measures, we now formulate the limit in Wasserstein distance. We recall that for probability measures $\mu,\nu$ on $\R^d$ with finite $q$-th moment, the $q$-Wasserstein distance is defined by
\[
 \rd_q(\mu,\nu) := \inf_{\pi \in \Pi(\mu,\nu)} \lt(\iint_{\R^d\times\R^d} |x-y|^q \, \rd\pi(x,y)\rt)^{\frac1q},
\]
where $\Pi(\mu,\nu)$ denotes the set of all couplings of $\mu$ and $\nu$. In particular, we denote by $ \rd_\infty(\mu,\nu)$ the $\infty$-Wasserstein distance
\[
 \rd_\infty(\mu,\nu) := \inf_{\pi \in \Pi(\mu,\nu)} \, \sup_{(x,y)\in\supp \pi} |x-y|,
\]
which measures the maximal displacement between supports of $\mu$ and $\nu$.

We are now in a position to state the uniform-in-time mean-field convergence result toward the Eulerian alignment dynamics.

\begin{theorem} \label{thm:mf}
Let $p=2$ and $q \in (1,\infty]$. Assume that the hypotheses of Theorem \ref{thm:deri_LA} hold.   Suppose in addition that the disintegration of $\rho_0$ along $\eta_t$ is Dirac for almost every $t\ge0$.  Let $(\rho_t,u_t)$ be the Eulerian pair associated with the Lagrangian flow $(\eta_t,v_t)$ through the Lagrangian--Eulerian correspondence described in Section \ref{ssec:ERA}.

If the initial modulated energies satisfy
\[
\mathscr{E}_q(X^N|\eta)(0)+\mathscr{E}_q(V^N|v)(0)\to 0\quad\text{as }N\to\infty,
\]
the empirical measures $\mu_t^N$ converge uniformly in time toward the mono-kinetic measure $\rho_t\otimes\delta_{u_t}$, in the sense that
\[
\esssup_{t\ge0} \rd_q (\mu_t^N, \rho_t\otimes\delta_{u_t} ) \to 0\quad\text{as }N\to\infty.
\]
Moreover, the pair $(\rho_t,u_t)$ is a distributional solution to the Euler--alignment
system \eqref{eq:EA-2}.
\end{theorem}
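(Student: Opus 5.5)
The plan is to compare $\mu_t^N$ with the lifted kinetic measure $\mu_t=(\eta_t,v_t)_\#\rho_0$ through an explicit coupling, and then to identify $\mu_t$ with $\rho_t\otimes\delta_{u_t}$ using the Dirac-disintegration hypothesis.

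\textbf{Step 1: an explicit coupling.} For each $t$ I take the transport plan
\[
\pi_t:=\frac1N\sum_{i=1}^N \delta_{(x_i(t),v_i(t))}\otimes (\eta_t,v_t)_\#\rho_0 ,
\]
which lies in $\Pi(\mu_t^N,\mu_t)$. Using $|(a,b)|^q\le 2^{q-1}(|a|^q+|b|^q)$, for $q<\infty$ this gives
\[
\rd_q(\mu^N_t,\mu_t)\le C_q\lt(\mathscr{E}_q(X^N|\eta)(t)+\mathscr{E}_q(V^N|v)(t)\rt).
\]
For $q=\infty$, the sup over $\supp\pi_t$ equals the $\rho_0$-esssup of the pointwise discrepancies, which is exactly the $q=\infty$ modulated quantity. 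The Lagrangian fields are defined only $\rho_0$-a.e., so this bound holds $\rho_0$-a.e.; that is the reason the statement uses $\esssup$ in $t$. Applying Theorem \ref{thm:deri_LA} with an $N$-independent constant $C$ then yields
\[
\sup_{t\ge0}\rd_q(\mu_t^N,\mu_t)\le C\lt(\mathscr{E}_q(X^N|\eta)(0)+\mathscr{E}_q(V^N|v)(0)\rt)\to0 .
\]

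\textbf{Step 2: identifying $\mu_t$.} Let $t$ be a time at which the disintegration $\{\nu_{t,z}\}$ of $\rho_0$ along $\eta_t$ is Dirac, say $\nu_{t,z}=\delta_{X_t(z)}$. Then $u_t(z)=v_t(X_t(z))$ for $\rho_t$-a.e. $z$, so $v_t(x)=u_t(\eta_t(x))$ for $\rho_0$-a.e. $x$. For any test function $\varphi$,
\[
\int\varphi\,\rd\mu_t=\intr \varphi(\eta_t(x),u_t(\eta_t(x)))\,\rho_0(\dx)=\intr\varphi(z,u_t(z))\,\rho_t(\dz),
\]
so $\mu_t=\rho_t\otimes\delta_{u_t}$. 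This holds for a.e. $t$, and combined with Step 1 it gives the $\esssup$ convergence.

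\textbf{Step 3: the Euler--alignment equation.} Since $p=2$, Theorem \ref{thm:ERA} gives that $(\rho,u,\tau)$ solves the ERA system with $\calR_2\equiv0$. Under the Dirac hypothesis the fibre variance $\theta_t$ vanishes for a.e. $t$, so $\tau_t=0$ for a.e. $t$. In the distributional formulation, time enters only through integrals $\int_0^\infty\cdots\,\dt$, so a null set of times does not matter and $\tau\equiv0$ as an element of $L^\infty_{\rm loc}(\calM)$. Hence \eqref{eq:EA-2} holds in the sense of distributions.

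\textbf{Main obstacle.} The delicate point is Step 2: the passage from Dirac disintegration to $v_t=u_t\circ\eta_t$ $\rho_0$-a.e. It requires choosing measurable representatives of $u_t$ and of the disintegration consistently. I would handle this by evaluating $\intr |v_t(x)-u_t(\eta_t(x))|^2\rho_0(\dx)$ through the disintegration: it equals $\intr \tr\,\theta_t\,\rho_t(\dz)$, which is zero in the Dirac case.

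A secondary point is $q=\infty$ with $\rho_0$ not finitely supported. The product coupling has support $\{(x_i,v_i)\}\times\overline{\{(\eta_t,v_t)(x)\}}$, so I would control the sup over the closure by the esssup via continuity of the norm.
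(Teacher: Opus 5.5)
Your proposal is correct and follows essentially the paper's route. The paper also uses the product coupling (Lemma \ref{lem:d_pZ_p}, which couples $\mu_t^N$ directly with $\rho_t\otimes\delta_{u_t}$) together with the uniform bound of Theorem \ref{thm:deri_LA}, and it obtains the Euler--alignment equation from the a.e.-in-time vanishing of $\tau_t$ exactly as in your Step 3. Your deviations are minor: you factor the comparison through the kinetic lift $(\eta_t,v_t)_\#\rho_0$, and you justify $v_t=u_t\circ\eta_t$ $\rho_0$-a.e. via the identity $\int|v_t-u_t\circ\eta_t|^2\,\rd\rho_0=\int\tr\,\theta_t\,\rd\rho_t$, which makes explicit a step the paper uses tacitly.
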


\begin{remark}
In the one-dimensional case, if $u_0 \in L^\infty(\rho_0)$ and the effective velocity $\widehat v$ is non-decreasing, then Theorem \ref{thm:1DEA} implies that the Lagrangian flow remains injective for all times. Consequently, the disintegration of $\rho_0$ along $\eta_t$ is Dirac for every $t\ge0$, and under the assumptions of Theorem \ref{thm:deri_LA} the uniform-in-time mean-field convergence holds toward a mono-kinetic Eulerian limit satisfying the Euler--alignment system in the sense of distributions.

When $d\ge2$, if the initial data satisfy the assumptions of Theorem \ref{thm:EA_global}, then the associated Lagrangian flow enjoys global injectivity and boundedness properties. As a result, all the a priori assumptions required in Theorem \ref{thm:mf} are automatically satisfied.
\end{remark}

\begin{remark}
A natural way to ensure the assumption
\[
\mathscr{E}_q(X^N|\eta)(0) + \mathscr{E}_q(V^N|v)(0)\to 0 \quad \text{as } N \to \infty
\]
is to generate the initial particle configuration by sampling positions independently according to $\rho_0$ and assigning velocities consistently with the initial Lagrangian velocity field, namely
\[
X_i(0)\sim\rho_0 \ \text{i.i.d.},\quad v_i(0):=u_0(X_i(0)), \quad 1\le i\le N,
\]
where $u_0$ denotes the Eulerian velocity field associated with the initial Lagrangian data \eqref{ini:Lag_p}.

Then the random variables $(X_i(0),v_i(0))$ are i.i.d. with common law $\rho_0\otimes\delta_{u_0}$, and the empirical measure
\[
\mu_0^N:=\frac1N\sum_{i=1}^N \delta_{(X_i(0),v_i(0))}
\]
converges almost surely toward $\rho_0\otimes\delta_{u_0}$ as $N\to\infty$. As a consequence, the initial modulated energies satisfy
\[
\mathscr{E}_q(X^N|\eta)(0)+\mathscr{E}_q(V^N|v)(0)\to 0, \quad \forall\, q>1.
\]
\end{remark}

\begin{remark}
Independently of the injectivity of the Lagrangian flow map $\eta_t$, the argument developed above yields a uniform-in-time mean-field convergence toward the kinetic measure $(\eta_t,v_t)_\#\rho_0$, which is a distributional solution to the Vlasov--alignment equation \eqref{VA} with $p=2$.

The additional assumption is only required to identify this kinetic limit with a mono-kinetic Eulerian state. Indeed, when the disintegration of $\rho_0$ along $\eta_t$ is Dirac, the Lagrangian--Eulerian correspondence implies that
\[
(\eta_t,v_t)_\#\rho_0 = \rho_t \otimes \delta_{u_t}.
\]
In particular, injectivity of $\eta_t$ is a sufficient condition for this property, but not the only one.

We emphasize that this mono-kinetic structure is consistent with the choice of the initial particle approximation. The condition
\[
\mathscr{E}_q(X^N|\eta)(0)+\mathscr{E}_q(V^N|v)(0)\to 0
\quad\text{as }N\to\infty
\]
requires the particle system to approximate a single Lagrangian velocity field at the initial time. This setting is therefore more restrictive than classical mean-field limits for general kinetic initial data, but it is precisely tailored to capture the uniform-in-time convergence toward mono-kinetic Eulerian dynamics.
\end{remark}
 

 
\begin{figure}[t]
\begin{center}
\begin{tikzpicture}[
  font=\small,
  sbox/.style={  draw,  rounded corners=2pt,  align=center,  inner sep=5pt,  minimum width=28mm,
  minimum height=14mm},
  box/.style={draw, rounded corners=2pt, align=center,
    inner sep=6pt, minimum width=48mm, minimum height=18mm},
  bigbox/.style={draw, rounded corners=2pt, align=center,
    inner sep=8pt, minimum width=58mm, minimum height=22mm},
  arr/.style={-Latex, line width=0.6pt},
  darr/.style={-Latex, dashed, line width=0.6pt},
  lab/.style={midway, fill=white, inner sep=1.5pt}
]

\node[coordinate] (C) at (0,0) {};

\node[sbox] (L) at (C) { \textbf{Lagrangian system}\\[1mm]
$(\eta_t, v_t)$ };

\node[sbox] (P) at (-6.1,3.2) { \textbf{Particle system}\\[1mm]
$\{(x_i(t),v_i(t))\}_{i=1}^N$ };

\node[sbox] (K) at (6.1,3.2) { \textbf{Kinetic equation}\\[1mm]
$f_t(x,v)$ };

\node[sbox] (E) at (-6.1,-3.2) { \textbf{Euler--alignment system}\\[1mm]
$(\rho_t,u_t)$ };

\node[sbox] (ER) at (6.1,-3.2) { \textbf{Euler--Reynolds system}\\[1mm]
$(\rho_t,u_t,\tau_t)$ };

\node[align=left, anchor=north] at ($(L.south)+(0,-2mm)$) {%
\begin{minipage}{0.62\linewidth}
\[
\rho_t=\eta_{t\#}\rho_0,
\]
\[
m_t=\eta_{t\#}(v_t\rho_0),
\]
\[
u_t=\frac{\rd m_t}{\rd\rho_t}.
\]
\end{minipage}
};

\draw[arr] (P) -- (K)
  node[lab, above] {mean-field limit $(N\to\infty)$};

\draw[arr] (P) -- (L)
  node[lab, midway] {stability};

\draw[arr] (P) -- (E)
  node[lab, midway, align=center] {mean-field limit  \\ via Lagrangian closure};

\draw[arr] (L) -- (K)
  node[lab, midway, align=center] {kinetic lift \\ $(\eta_t,v_t)_\#\rho_0$};

\draw[arr] (K) -- (ER)
  node[lab, midway, align=center] {disintegration \\ \&  moments};

\draw[arr, line width=0.8pt] (L) -- (E)
  node[lab, midway] {Dirac disintegration};

\draw[arr] (L) -- (ER)
  node[lab, midway] {nontrivial fibres};

\draw[arr] (ER) -- (E)
  node[lab, below] {Dirac disintegration $\Rightarrow \tau_t=0$};

\end{tikzpicture}
\end{center}

\caption{Schematic relations between the particle dynamics, the kinetic description, and the Lagrangian continuum system. The Lagrangian system $(\eta_t,v_t)$ serves as a reference flow (structurally close to the particle dynamics) and provides a push-forward representation of the kinetic measure. When the disintegration of $\rho_0$ along $\eta_t$ is Dirac (for instance when the flow is injective), the dynamics close to the Euler--alignment system; otherwise the induced Eulerian description takes the form of an Euler--Reynolds system.}
\end{figure}
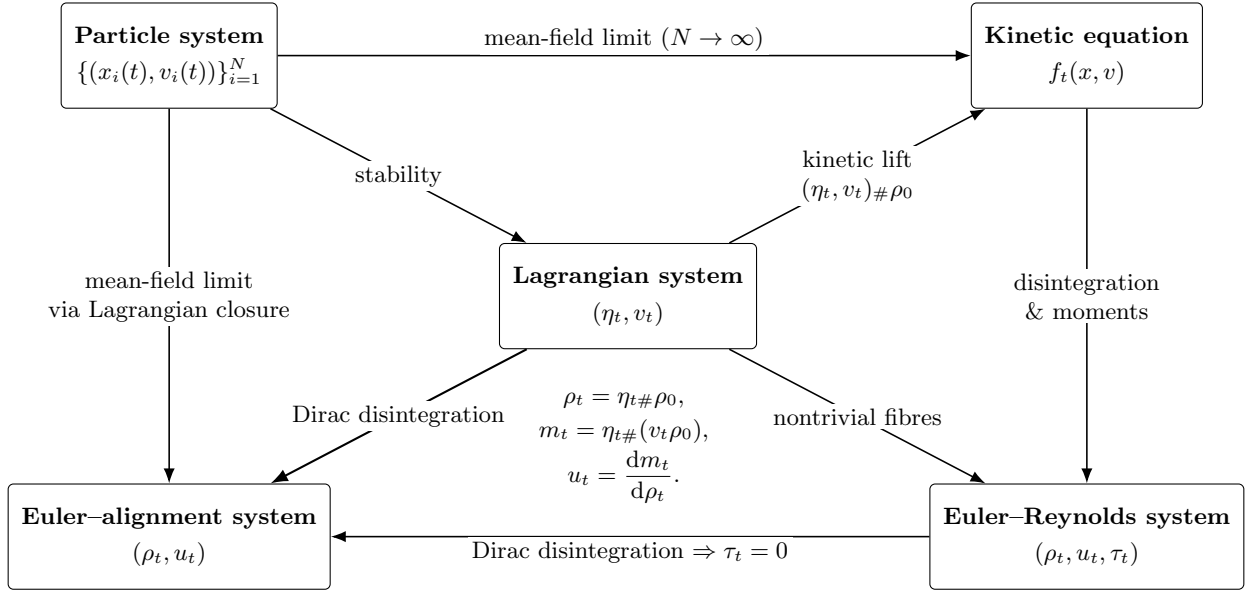



\subsection{Organization of the paper}
The rest of this paper is organized as follows. Section \ref{sec:LA} is devoted to the Lagrangian $p$-alignment system \eqref{Lag_p}; in particular, it proves global well-posedness and quantitative flocking estimates (Theorem \ref{thm:ext}). Section \ref{sec:ERA} develops the measure-theoretic Lagrangian--Eulerian correspondence and shows that the induced Eulerian triple solves the ERA system, together with the energy inequality (Theorem \ref{thm:ERA}). In Section \ref{sec:ac_ERA}, we establish the asymptotic closure mechanism under velocity flocking, showing that the Reynolds stress and, for $p>2$, the nonlinear defect force vanish in the long-time limit (Theorem \ref{thm:asymp_closure}). Section \ref{sec:1DEA} treats the one-dimensional linear case $p=2$ and characterizes the injective mono-kinetic regime through a sharp critical-threshold condition, while describing the loss of injectivity outside this regime (Theorem \ref{thm:1DEA}). Section \ref{sec:EA} addresses the multi-dimensional linear case $p=2$, deriving global weak solutions to the Euler--alignment system under a quantitative large-coupling condition that enforces closure (Theorem \ref{thm:EA_global}). Finally, Section \ref{sec:uni_mf} proves uniform-in-time mean-field stability and convergence results for the particle Cucker--Smale dynamics in the linear regime, yielding a uniform-in-time mono-kinetic Eulerian limit under an almost everywhere Dirac disintegration condition for the fibres of the Lagrangian flow (Theorems \ref{thm:deri_LA} and \ref{thm:mf}). Appendix \ref{app:kinetic} collects the phase-space (kinetic) reformulation of the Lagrangian flow and its moment relations with the Eulerian variables. Appendix \ref{app:gene_mf} provides complementary mean-field estimates for general $p\ge2$ on finite time intervals and discusses the resulting kinetic and mono-kinetic limits under additional structural assumptions.

%
%
%
%
%

 \section{Dynamics of Lagrangian $p$-alignment formulation}\label{sec:LA}
 
This section completes the proof of Theorem \ref{thm:ext}. We first prove global well-posedness of the Lagrangian system and then derive diameter-based estimates leading to flocking and explicit decay rates.

%
%
%
%
%
 \subsection{Global existence and uniqueness}
 
We prove the global existence and uniqueness part of Theorem \ref{thm:ext}. The argument is based on a fixed-point approach for the Lagrangian system and relies on uniform $L^\infty$ estimates.
 
\begin{proof}[Proof of Theorem \ref{thm:ext}: existence]
We work with the displacement variable
\[
w_t(x):=\eta_t(x)-x,
\]
so that $w(0,x)=0$ and $\pa_t w=v$. In terms of $(w,v)$, the Lagrangian $p$-alignment system becomes 
\bq\label{eq:w-formulation}
\frac{\rd}{\dt} \binom{w}{v} = \binom{v}{\calA_p[w,v]},
\quad
(w,v)\big|_{t=0}=(0,u_0),
\eq
where
\[
\calA_p[w,v](x) :=\kappa\intr \phi\lt((x+w(x))-(y+w(y))\rt) G_p(v(y)-v(x))\,\rho_0(\dy), \quad  G_p(\xi):=|\xi|^{p-2}\xi.
\]
The natural phase space for \eqref{eq:w-formulation} is 
\[
X:=L^\infty(\rho_0)\times L^\infty(\rho_0), \quad  \|(w,v)\|_X:=\|w\|_{L^\infty(\rho_0)}+\|v\|_{L^\infty(\rho_0)}.
\]
We first verify that the vector field on the right-hand side of \eqref{eq:w-formulation} is locally Lipschitz. Let $R>0$ and $(w_i,v_i)\in X$ satisfy $\|(w_i,v_i)\|_X \le R$, $i=1,2$. Since the first component $(w,v)\mapsto v$ is linear (hence Lipschitz), it remains to control the alignment operator. Writing
\[
\calA_p[w_1,v_1] - \calA_p[w_2,v_2] =: I + II,
\]
with
\[
I = \kappa\intr \lt\{\phi((x+w_1(x))-(y+w_1(y))) - \phi((x+w_2(x))-(y+w_2(y))) \rt\}  G_p(v_1(y)-v_1(x))\rho_0(\dy)
\]
and
\[
II = \kappa\intr   \phi((x+w_2(x))-(y+w_2(y))) \lt(  G_p(v_1(y)-v_1(x)) - G_p(v_2(y)-v_2(x))\rt) \rho_0(\dy),
\]
we estimate, using Lipschitz continuity of $\phi$,  
\[
|\phi((x+w_1(x))-(y+w_1(y))) - \phi((x+w_2(x))-(y+w_2(y)))|\le 2 \|\phi\|_{\rm Lip} \|w_1-w_2\|_{L^\infty(\rho_0)},
\]
while $|v_1(y)-v_1(x)|\le 2\|v_1\|_{L^\infty(\rho_0)} \le 2R$ ensures
\[
|I| \le 2\kappa \|\phi\|_{\rm Lip} (2R)^{p-1} \|w_1-w_2\|_{L^\infty(\rho_0)}.
\]
For $p\ge2$, note that 
\[
\nabla G_p(\xi) =(p-2)|\xi|^{p-4}\xi\otimes\xi + |\xi|^{p-2}I, 
\]
and this gives
\[
\|\nabla G_p(\xi)\|_{\rm op} \le C_p |\xi|^{p-2},
\]
for some constant $C_p>0$ depending only on $p$. Applying the mean-value form of Taylor's theorem, we get
\[
|G_p(a)-G_p(b)| \le \sup_{\theta\in[0,1]}\|\nabla G_p(\theta a+ (1-\theta) b)\|_{\rm op} |a-b| \le C_p (|a|+|b|)^{p-2}|a-b|.
\]
Thus, since $\|v_i\|_{L^\infty(\rho_0)} \le R$,
\[
|II| \le 2 \kappa \|\phi\|_{L^\infty} C_p (4R)^{p-2} \|v_1-v_2\|_{L^\infty(\rho_0)}.
\]
Hence, we have
\[
\|\calA_p[w_1,v_1]-\calA_p[w_2,v_2]\|_{L^\infty(\rho_0)} \le C(R,p,\phi,\kappa)\lt(\|w_1-w_2\|_{L^\infty(\rho_0)} + \|v_1-v_2\|_{L^\infty(\rho_0)}\rt),
\]
showing that the right-hand side of \eqref{eq:w-formulation} is locally Lipschitz on $X$. By the Picard--Lindel\"of theorem, there exists a unique maximal solution
\[
(w,v)\in C^1([0,T_{\max});X)
\]
for some $T_{\max}\in(0,\infty]$.

We now establish an $L^\infty$ maximum principle for $v$ using the upper Dini derivative. Fix a unit vector $e\in\R^d$ and set $s_t(x):=v_t(x)\cdot e$. Then, $s$ satisfies
\[
\pa_t s_t(x) =\intr K_t(x,y) (s_t(y)-s_t(x))\,\rho_0(\dy),
\]
where
\[
K_t(x,y):=\kappa \phi ((x+w_t(x))-(y+w_t(y)))|v_t(y)-v_t(x)|^{p-2}.
\]
Since $\kappa\ge0$ and $\phi\ge0$, we have $K_t\ge0$. For each fixed $t$, local existence guarantees  $\|v_t\|_{L^\infty(\rho_0)}<\infty$, and thus
\[
\|K_t\|_{L^\infty(\rho_0\otimes\rho_0)} \le \kappa \|\phi\|_{L^\infty} \lt(2\|v_t\|_{L^\infty(\rho_0)}\rt)^{p-2}<\infty.
\]
Define $M_e(t):=\esssup_{x}s_t(x)$, and let 
\[
D^+M_e(t):=\limsup_{h\downarrow0}\frac{M_e(t+h)-M_e(t)}{h}
\]
be its upper right Dini derivative. Fix $t\ge0$ and $\varepsilon>0$, and then choose $x_\e$ such that $s_t(x_\e)\ge M_e(t)-\e$. Then 
\[
s_t(y)-s_t(x_\e) \le \e \quad \text{for $\rho_0$-a.e.  $y$}, 
\]
and hence,
\[
\pa_t s_t(x_\e) =\intr K_t(x_\e,y)(s_t(y)-s_t(x_\e))\,\rho_0(\dy) \le \e \|K_t\|_{L^\infty(\rho_0\otimes\rho_0)}.
\]
By the standard Dini envelope argument (letting $\varepsilon\downarrow0$ at fixed $t$), we conclude $D^+M_e(t)\le0$ for all $t\geq 0$ and unit vector $e$.   Hence, we have
\bq\label{eq:v-Linf-decay}
\|v_t\|_{L^\infty(\rho_0)} =\sup_{|e|=1} M_e(t)\le \sup_{|e|=1} M_e(0)\le \|u_0\|_{L^\infty(\rho_0)} \quad\text{for all }t<T_{\max}.
\eq

Since $\pa_t w=v$, the bound \eqref{eq:v-Linf-decay} yields
\[
\|w_t\|_{L^\infty(\rho_0)} \le t \|u_0\|_{L^\infty(\rho_0)}, \quad \|\pa_t v_t\|_{L^\infty(\rho_0)} \le C(\phi,p,\kappa) \|u_0\|_{L^\infty(\rho_0)}^{p-1}.
\]
Thus $(w,v)$ remains bounded and uniformly Lipschitz in time on every finite interval, hence the continuation principle yields $T_{\max}=\infty$. Together with the local uniqueness from the Picard--Lindel\"of theorem, this gives a unique global solution. Restoring $\eta=x+w$, we obtain the desired solution of \eqref{Lag_p}. This completes the proof.
\end{proof}

Before proceeding to the flocking estimates, we record a few standard identities satisfied by sufficiently regular solutions of \eqref{Lag_p}, including conservation of momentum and dissipation of the kinetic energy.

\begin{lemma}\label{lem:apr} Let $(\eta,v)$ be a global classical solution to the system \eqref{Lag_p}. Then we have
\[
\frac{\rd}{\dt} \intr \eta_t(x)\,\rho_0(\dx) = \intr v_t(x)\,\rho_0(\dx), \quad \frac{\rd}{\dt}\intr v_t(x)\,\rho_0(\dx) =0,
\]
and
\[
\frac12 \frac{\rd}{\dt} \intr |v_t(x)|^2\,\rho_0(\dx) + \frac\kappa2\intrr \phi(\eta_t(x)-\eta_t(y))|v_t(x) - v_t(y)|^p \,\rho_0(\dx) \rho_0(\dy) =0.
\]
\end{lemma}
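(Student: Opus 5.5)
The three identities follow by differentiating under the integral sign and then symmetrizing the interaction term through the exchange of labels $x\leftrightarrow y$. Since the solution from Theorem \ref{thm:ext} satisfies $(\eta-{\rm id},v)\in C^1([0,\infty);L^\infty(\rho_0)\times L^\infty(\rho_0))$ and $\rho_0$ is a probability measure, each map $t\mapsto\intr \eta_t\,\rho_0(\dx)$, $t\mapsto\intr v_t\,\rho_0(\dx)$, $t\mapsto\intr |v_t|^2\,\rho_0(\dx)$ is $C^1$. This holds because integration against $\rho_0$ is a bounded linear functional on $L^\infty(\rho_0)$, and squaring is a $C^1$ map on $L^\infty$. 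When $\supp\rho_0$ is unbounded, I will work with $\eta_t-{\rm id}$ for the first identity; the identity term contributes a time-independent constant, whenever $\intr x\,\rho_0(\dx)$ makes sense. Hence derivatives may be moved inside the integrals, and the first identity is immediate from $\pa_t\eta_t=v_t$.

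For momentum conservation, I will integrate the $v$-equation against $\rho_0(\dx)$. This gives
\[
\kappa\intrr \phi(\eta_t(x)-\eta_t(y))\,G_p(v_t(y)-v_t(x))\,\rho_0(\dy)\rho_0(\dx).
\]
The integrand is bounded, so Fubini applies. Swapping $x\leftrightarrow y$ leaves $\phi$ unchanged because $\phi$ is even, while it flips the sign of $G_p$ because $G_p$ is odd. The integral therefore equals its own negative and vanishes.

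For the energy identity, I will compute
\[
\frac12\frac{\rd}{\dt}\intr|v_t|^2\,\rho_0(\dx)=\intr v_t\cdot\pa_t v_t\,\rho_0(\dx)=\kappa\intrr\phi(\eta_t(x)-\eta_t(y))\,G_p(v_t(y)-v_t(x))\cdot v_t(x)\,\rho_0(\dy)\rho_0(\dx).
\]
I will then average this expression with its relabeled version, where $x\leftrightarrow y$. Evenness of $\phi$ and oddness of $G_p$ turn the average into
\[
-\frac{\kappa}{2}\intrr \phi\,G_p(v_t(y)-v_t(x))\cdot(v_t(y)-v_t(x)).
\]
Finally, $G_p(\xi)\cdot\xi=|\xi|^p$, which gives the stated dissipation term.

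The only point needing care is the justification of differentiation under the integral sign and of Fubini. Both follow from the $L^\infty$ bounds of Theorem \ref{thm:ext} together with the boundedness of $\phi$. Beyond that, the argument is purely algebraic.
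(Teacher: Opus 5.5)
Your proposal is correct and is the standard argument the paper has in mind; the paper states this lemma without proof. You differentiate in $L^\infty(\rho_0)$ using the $C^1$ regularity from Theorem \ref{thm:ext}, then symmetrize under $x\leftrightarrow y$ using the evenness of $\phi$ and the oddness of $G_p$ — the same antisymmetrization as in Step 2 of the proof of Theorem \ref{thm:ERA} — and your caveat that the first identity needs $\intr |x|\,\rho_0(\dx)<\infty$ when $\supp\rho_0$ is unbounded is a precision the paper leaves implicit.
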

\begin{remark}
The center of mass can be explicitly given as
\[
 \intr \eta_t(x)\,\rho_0(\dx) =  \intr x\,\rho_0(\dx) + t  \intr u_0(x)\,\rho_0(\dx).
\]
Thus, by Galilean invariance, without loss of generality, one may assume
\[
 \intr \eta_t(x)\,\rho_0(\dx) =  \intr v_t(x)\,\rho_0(\dx) = 0, \quad \forall\, t \geq 0.
\]
\end{remark}

%
%
%
%
%

\subsection{Flocking estimates}

We now derive the quantitative flocking bounds in Theorem \ref{thm:ext}. Throughout this subsection, $(\eta,v)$ denotes the global Lagrangian solution constructed above. The argument follows the standard diameter method (cf.  \cite{BT24,CCH14,HHK10,NSTpre}), adapted to the present continuum Lagrangian setting.

\begin{proof}[Proof of Theorem \ref{thm:ext}: flocking dynamics]  
 Since 
 \[
 \pa_t (\eta_t(x) - \eta_t(y)) = v_t(x) - v_t(y), \quad x,y \in \supp \rho_0,
 \]
the envelope property of Dini upper derivative gives
 \[
 |D^+ \rd_\eta(t)| \leq \rd_v(t).
 \]
 
We next derive the differential inequality for $\rd_v(t)$. Let $x,y \in \supp \rho_0$ maximize $|v_t(x)-v_t(y)|$, and set 
\[
n := \frac{v_t(x) - v_t(y)}{|v_t(x) - v_t(y)|}.
\]
Projecting the equations along $n$ and using that $r\mapsto|r|^{p-2}r$ is odd and increasing ($p\ge2$), one obtains the sign relations
\[
n \cdot (v_t(z)-v_t(x))\le0, \quad n  \cdot (v_t(z)-v_t(y))\ge0 
\]
for $\rho_0$-a.e. $z$, and thus
\[
n \cdot G_p(v_t(z)-v_t(x))\le0,\quad n \cdot G_p(v_t(z)-v_t(y))\ge0.
\]
Now recall that $\phi$ is radial and nonincreasing in the distance. Since $|\eta_t(x)-\eta_t(z)|\le\rd_\eta(t)$ and $|\eta_t(y)-\eta_t(z)|\le\rd_\eta(t)$, we have
\[
\phi(\eta_t(x)-\eta_t(z)) \ge \phi(\rd_\eta(t)),\quad \phi(\eta_t(y)-\eta_t(z)) \ge \phi(\rd_\eta(t)) \quad\text{for $\rho_0$-a.e. }z.
\]
Using the strong monotonicity of $G_p$ (valid for all $p\ge2$),
\[
\big(G_p(\xi)-G_p(\zeta)\big) \cdot(\xi-\zeta) \ge  2^{\,2-p}\,|\xi-\zeta|^p, \quad \xi,\zeta\in\R^d,
\]
we arrive at
\bq\label{eq:dia_v}
D^+ \rd_v(t) \leq - 2^{2-p}\kappa\phi(\rd_\eta(t))\rd_v^{p-1}(t).
\eq
When $p=2$, Gr\"onwall's inequality yields
\[
\rd_v(t)\le \rd_v(0)\exp\left(-\kappa\int_0^t \phi(\rd_\eta(s))\,ds\right).
\]
When $p>2$, integration of \eqref{eq:dia_v} gives
\[
\rd_v(t) \le \lt(\rd_v(0)^{2-p} + (p-2)2^{2-p}\kappa \int_0^t \phi(\rd_\eta(s))\,\ds\rt)^{-\frac1{p-2}}. 
\]
Hence, in either case, velocity alignment follows provided
\[
\int_0^\infty \phi(\rd_\eta(s))\,\ds = \infty.
\]
On the other hand, since $\rd_v(t)\le \rd_v(0)$, we have
 \[
 \rd_\eta(t)\le \rd_\eta(0)+t \rd_v(0).
 \]
and therefore
\[
\int_0^\infty \phi(\rd_\eta(s))\,\ds \ge \int_0^\infty \phi (\rd_\eta(0)+s \rd_v(0))\,\ds = \frac1{\rd_v(0)}\int_{\rd_\eta(0)}^\infty \phi(r)\,\dr.
\]
Under the heavy-tail condition \eqref{condi_phi}, the right-hand side diverges, and thus $\rd_v(t)\to0$ as $t\to\infty$. This completes the proof.
 \end{proof}
 
  \begin{remark}[A sharper flocking estimate for $2\le p<3$]\label{rmk:p-sub3}
 Assume $p\in[2,3)$ and suppose that the initial data satisfy
  \[
\frac1{3-p} \rd_v^{3-p}(0) <  2^{2-p}\kappa \int_{\rd_\eta(0)}^\infty \phi(r)\,\dr.
 \]
 This condition is clearly weaker than the heavy-tail assumption \eqref{condi_phi}. Consider the Lyapunov functional
\[
\widehat\calL_p(t) := \frac1{3-p} \rd_v^{3-p}(t) + 2^{2-p}\kappa \int_{\rd_\eta(0)}^{\rd_\eta(t)} \phi(r)\,\dr.
\]
Then, using
\[
D^+ \rd_\eta(t) \leq \rd_v(t), \quad D^+ \rd_v(t) \leq - 2^{2-p}\kappa\phi(\rd_\eta(t))\rd_v^{p-1}(t),
\]
one obtains
\[
D^+ \widehat\calL_p(t)\le0.
\]
Hence
\[
\widehat\calL_p(t)\le \widehat\calL_p(0) \quad\text{for all }t\ge0.
\]
By the assumption on the initial data, there exists a unique $\rd_\eta^\infty>0$ such that
\[
\frac1{3-p} \rd_v^{3-p}(0) =  2^{2-p}\kappa \int_{\rd_\eta(0)}^{\rd_\eta^\infty} \phi(r)\,\dr,
\]
and consequently
\[
0 \leq  \frac1{3-p} \rd_v^{3-p}(t) \leq 2^{2-p}\kappa \int_{\rd_\eta(t)}^{\rd_\eta^\infty} \phi(r)\,\dr.
\]
In particular,
\[
\sup_{t \geq 0} \rd_\eta(t) \leq \rd_\eta^\infty.
\]
Therefore, we have
\[
D^+ \rd_v(t) \leq - 2^{2-p}\kappa \phi(\rd_\eta^\infty)\rd_v^{p-1}(t),
\]
which yields the explicit decay estimate \eqref{decay_v_sub3}. This argument is specific to the range $2\le p<3$, since for $p\ge3$ the coefficient $(3-p)^{-1}$ is nonpositive and the above Lyapunov structure no longer provides a useful coercive control.
 \end{remark}

%
%
%
%
%
\section{Euler--Reynolds--alignment system}\label{sec:ERA}

This section develops the Lagrangian--Eulerian correspondence stated in the introduction for the global $L^\infty$ Lagrangian flow $(\eta_t,v_t)$ constructed in Theorem \ref{thm:ext}. We first show how to associate to $(\eta_t,v_t)$ an Eulerian density $\rho_t$, momentum $m_t$, barycentric velocity $u_t$, and a nonnegative Reynolds stress $\tau_t$, using only pushforward and disintegration of the reference measure $\rho_0$. We then establish the basic time-regularity of these objects and verify that the resulting triple $(\rho_t,u_t,\tau_t)$ solves the ERA system \eqref{eq:ERA} in the sense of distributions, satisfies the global energy inequality \eqref{ERA_energy}, and is unique within the class of Eulerian triples induced by Lagrangian solutions.

%
%
%
%
%
\subsection{Lagrangian--Eulerian correspondence}\label{ssec:LEc}

Let $(\eta_t,v_t)$ be the global Lagrangian solution from Theorem \ref{thm:ext}. Since $\eta_t$ is only available in an $L^\infty$ framework and may be non-injective, Eulerian quantities cannot be recovered pointwise. We therefore use disintegration of $\rho_0$ along the fibres of $\eta_t$ to define the Eulerian density and momentum by pushforward, the barycentric velocity by Radon--Nikodym differentiation, and the Reynolds stress as the associated fibrewise covariance.  

We begin by recalling the disintegration theorem of \cite[Theorem 5.3.1]{AGS08} and then apply it to the flow map $\eta_t$ to define the Eulerian density, velocity, and Reynolds stress.

\begin{theorem}\label{thm:disint}
Let $X$, $\bar X$ be Radon separable metric spaces, $\mu \in \calP(X)$, let $\pi: X \to \bar X$ be a Borel map and let $\nu = \pi_\# \mu \in \calP(\bar X)$. Then there exists a $\nu$-a.e. uniquely determined Borel family of probability measures $\{\mu_{\bar x}\}_{\bar x \in \bar X} \subset \calP(X)$ such that
\[
\mu_{\bar x} (X \setminus \pi^{-1}(\{\bar x\})) = 0 \quad \text{for $\nu$-a.e. $\bar x \in \bar X$},
\]
and for every Borel map $f: X \to [0,+\infty)$,
\[
\int_X f(x)\,\mu(\dx) = \int_{\bar X} \lt(  \int_{\pi^{-1}(\bar x)} f(x) \,\mu_{\bar x}(\dx)\rt)  \nu(\rd\bar x).
\]
In particular, if $X = X_1 \times X_2$, $\bar X = X_1$, and $\mu \in \calP(X_1 \times X_2)$ with first marginal $\nu = \pi_1 {}_\# \mu$, then one can identify each fibre $\pi_1^{-1}(x_1) = \{x_1\} \times X_2$ with $X_2$ and find a Borel family $\{\mu_{x_1}\}_{x_1 \in X_1} \subset \calP(X_2)$ (unique $\nu$-a.e.) such that
\[
\mu = \int_{X_1} \mu_{x_1} \nu(\rd x_1).
\]
\end{theorem}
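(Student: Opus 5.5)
The statement is the disintegration theorem of \cite[Theorem 5.3.1]{AGS08}, recalled here without proof. If one wants a self-contained argument, the natural route is through conditional measures.

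\textbf{Step 1: the product case.} Treat first $X=X_1\times X_2$ with $\pi=\pi_1$. Fix a countable family $\{f_k\}\subset C_b(X_2)$ that is closed under rational linear combinations and determines measures. For each $k$, the signed measure $A\mapsto\int_{A\times X_2}f_k\,\rd\mu$ is absolutely continuous with respect to $\nu$, so the Radon--Nikodym theorem gives a density $g_k\in L^1(\nu)$.

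\textbf{Step 2: building the fibre measures.} Off a single $\nu$-null set, the map $f_k\mapsto g_k(x_1)$ is positive, linear over $\mathbb Q$, and normalized, since $g_{1}=1$. The main obstacle is extending this functional to a genuine probability measure on $X_2$; this needs tightness, i.e.\ countable additivity.
\begin{itemize}
\item For $X_2$ compact metric, Riesz representation applied to the extension to $C(X_2)$ suffices.
\item For Radon separable $X_2$, exhaust by compacts $K_n$ with $\mu(X_1\times K_n^c)<1/n$.
\item Show that the conditional masses of $K_n^c$ tend to $0$ for $\nu$-a.e.\ $x_1$. This follows from monotone convergence, because their integrals tend to zero and they are monotone in $n$.
\item Alternatively, embed $X_2$ into a compact metric space, disintegrate there, and use the tightness just described to check that the fibre measures charge $X_2$ fully for $\nu$-a.e.\ $x_1$.
\end{itemize}
Borel measurability of $x_1\mapsto\mu_{x_1}(B)$ then follows by a monotone class argument, starting from $B$ open.

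\textbf{Step 3: the integral formula.} The identity holds for products of indicators by construction. A monotone class argument, followed by monotone convergence, extends it to all Borel $f\ge0$.

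\textbf{Step 4: uniqueness.} Two families agreeing in the defining identity give the same integral of $f_k$ on a.e.\ fibre for every $k$. Since the family is countable, they agree simultaneously for all $k$ off a null set, and the measure-determining property forces equality.

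\textbf{Step 5: general map $\pi$.} Apply the product case to the graph measure $(\pi,\mathrm{id})_\#\mu$ on $\bar X\times X$, whose first marginal is $\nu$. This produces measures $\mu_{\bar x}$ on $X$ satisfying the integral formula.

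\textbf{Step 6: concentration on fibres.} Apply the formula to $f(\bar x,x)=|{\rm dist}(\pi(x),\bar x)|\wedge1$, using a metric on $\bar X$. The graph measure sees $f=0$, so $\mu_{\bar x}(\{\pi\neq\bar x\})=0$ for $\nu$-a.e.\ $\bar x$. Separability of $\bar X$ ensures the relevant sets are Borel.
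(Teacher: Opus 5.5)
Your proposal is correct and matches the paper, which gives no proof of its own and simply imports the result from \cite[Theorem 5.3.1]{AGS08}; your sketch is the standard argument behind that reference: Radon--Nikodym densities of $A\mapsto\int_{A\times X_2}f_k\,\rd\mu$ on a countable family, Riesz representation after compactifying the fibre space, tightness along the compacts $K_n$ to put the fibre measures back on $X_2$, and the graph measure $(\pi,\mathrm{id})_\#\mu$ for general $\pi$. Two steps should be made explicit: for uniqueness with a general $\pi$, test the identity with $f(x)=\mathbf 1_A(\pi(x))f_k(x)$ and use the concentration on fibres to replace $\mathbf 1_A(\pi(x))$ by $\mathbf 1_A(\bar x)$, which reduces it to your Step 4; and in the compact case, $\{f_k\}$ must be dense in $C(X_2)$, not merely measure-determining, so that the functional extends by continuity.
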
 

\begin{lemma} \label{lem:ERA-prep}
Assume $p\geq 2$ and the hypotheses of Theorem \ref{thm:ext}. For each $t\ge0$, define
\[
\rho_t := (\eta_t)_\# \rho_0, \quad m_t := (\eta_t)_\#(v_t\,\rho_0).
\]
Then the following holds:
\begin{enumerate}[label=(\roman*)]
\item $m_t \ll \rho_t$ and the barycentric (mean) velocity
\[
u_t := \frac{\rd m_t}{\rd\rho_t}
\]
is well-defined and satisfies $u_t\in L^\infty(\rho_t;\R^d)$.

\item There exists a (unique $\rho_t$-a.e.) family of probability measures 
$\{\nu_{t,z}\}_{z\in \supp(\rho_t)}$ concentrated on $\eta_t^{-1}(\{z\})$ such that
\[
\rho_0 = \intr \nu_{t,z}\,\rho_t(\dz),
\]
that is, for every Borel set $A\subset\R^d$,
\[
\rho_0(A)=\intr \nu_{t,z}(A)\,\rho_t(\dz).
\]
In particular, for $\rho_t$-a.e.  $z$, the barycentric velocity admits the fibre-average representation
\[
u_t(z)=\intr v_t(x)\,\nu_{t,z}(\dx),
\]
where the integral may be taken over $\R^d$ since $\nu_{t,z}$ is supported on $\eta_t^{-1}(\{z\})$.

\item The Reynolds stress tensor is given by the fibre variance
\[
\tau_t := \rho_t \theta_t,  \quad \theta_t(z) := \intr (v_t(x)-u_t(z))\otimes(v_t(x)-u_t(z))\,\nu_{t,z}(\dx),
\]
and satisfies $\tau_t \ge 0$ (as a matrix-valued measure with symmetric positive semidefinite density $\theta_t$ $\rho_t$-a.e.) and $\tau_{t=0} = 0$. Moreover, for every $T>0$, the Reynolds stress satisfies the uniform bound
\[
\sup_{t\in[0,T]}\|\tau_t\|_{\calM(\R^d;\R^{d\times d})} \le \intr |u_0(x)|^2\,\rho_0(\dx).
\]
\end{enumerate}
\end{lemma}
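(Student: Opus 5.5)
For part (i), we fix $t\ge0$ and use that $\eta_t$ is a Borel map, choosing a Borel representative of the $L^\infty(\rho_0)$ class. Then $\rho_t=(\eta_t)_\#\rho_0$ is a probability measure and $m_t=(\eta_t)_\#(v_t\rho_0)$ is a finite vector measure. For any Borel $B$ with $\rho_t(B)=0$ we have $\rho_0(\eta_t^{-1}(B))=0$, so
\[
|m_t|(B)\le\int_{\eta_t^{-1}(B)}|v_t|\,\rho_0(\dx)\le \|v_t\|_{L^\infty(\rho_0)}\,\rho_0(\eta_t^{-1}(B))=0 .
\]
Hence $m_t\ll\rho_t$, and the Radon--Nikodym theorem gives $u_t$. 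The same inequality shows $|m_t|\le \|v_t\|_{L^\infty(\rho_0)}\rho_t$, and therefore $\|u_t\|_{L^\infty(\rho_t)}\le\|v_t\|_{L^\infty(\rho_0)}\le\|u_0\|_{L^\infty(\rho_0)}$ by the maximum principle in Theorem \ref{thm:ext}.

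For part (ii), we apply Theorem \ref{thm:disint} with $X=\bar X=\R^d$, $\mu=\rho_0$ and $\pi=\eta_t$. This yields a $\rho_t$-a.e. unique Borel family $\{\nu_{t,z}\}$ concentrated on $\eta_t^{-1}(\{z\})$ with $\rho_0=\int\nu_{t,z}\,\rho_t(\dz)$. Nonnegativity of $f$ is not restrictive: splitting $f$ into positive and negative parts, the disintegration formula extends to bounded Borel $f$ of any sign. To get the fibre average, we take $f(x)=\psi(\eta_t(x))\,v_t(x)\cdot e$ with $\psi$ bounded Borel and $e$ a unit vector. Since $\eta_t(x)=z$ holds $\nu_{t,z}$-a.e., this gives
\[
\int \psi\,e\cdot \rd m_t=\int \psi(z)\Big(\int v_t\cdot e\,\rd\nu_{t,z}\Big)\rho_t(\dz).
\]
By uniqueness of the Radon--Nikodym density, $u_t(z)=\int v_t\,\rd\nu_{t,z}$ for $\rho_t$-a.e. $z$. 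One point needs checking: the inner integral must be Borel in $z$, which follows from the Borel measurability of the family $\{\nu_{t,z}\}$ in Theorem \ref{thm:disint}.

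For part (iii), $\theta_t(z)$ is a covariance matrix of the bounded map $v_t$ under the probability measure $\nu_{t,z}$. Hence it is symmetric, and for every $\xi$,
\[
\xi^\top\theta_t(z)\xi=\int|(v_t-u_t(z))\cdot\xi|^2\,\rd\nu_{t,z}\ge0 .
\]
Its Borel measurability in $z$ again follows from that of the disintegration, and $\tau_t=\rho_t\theta_t$ is independent of the representative since $\nu_{t,z}$ is unique $\rho_t$-a.e. At $t=0$ we have $\eta_0={\rm id}$, so $\nu_{0,z}=\delta_z$, which gives $u_0(z)=v_0(z)$ and $\theta_0=0$.

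For the mass bound, we use that $\theta_t$ is positive semidefinite, so its operator norm is at most its trace:
\[
\|\tau_t\|_{\calM}\le\int\tr\,\theta_t\,\rd\rho_t=\int\Big(\int|v_t|^2\,\rd\nu_{t,z}-|u_t(z)|^2\Big)\rho_t(\dz)\le\int|v_t|^2\,\rd\rho_0 .
\]
Here the middle equality is the variance identity, and the last step drops the nonpositive $-|u_t|^2$ term and applies the disintegration formula. The energy identity of Lemma \ref{lem:apr} gives $\int|v_t|^2\,\rd\rho_0\le\int|u_0|^2\,\rd\rho_0$. That identity holds for our solution because $v\in C^1([0,\infty);L^\infty(\rho_0))$, so we may differentiate under the integral and symmetrize. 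Taking the supremum over $t\in[0,T]$ finishes the argument. (If one prefers a matrix norm other than the operator norm, the bound changes only by a dimensional constant, or is exact for the trace norm.)

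The main technical obstacle is the measurability bookkeeping in part (ii): choosing a Borel representative of $\eta_t$ and justifying that fibre integrals of $v_t$ are Borel in $z$. Everything else is direct.
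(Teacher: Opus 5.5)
Your proof is correct and follows essentially the same route as the paper. The shared steps are the null-set argument for $m_t\ll\rho_t$, Radon--Nikodym, disintegration of $\rho_0$ along (a Borel representative of) $\eta_t$, identification of $u_t$ as the fibre barycentre by testing, positive semidefiniteness of the fibre covariance, $\nu_{0,z}=\delta_z$, and the variance identity for the trace bound.

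Your deviations are minor, sound, and in one place more complete than the paper. You read off $\|u_t\|_{L^\infty(\rho_t)}$ directly from $|m_t|\le\|v_t\|_{L^\infty(\rho_0)}\rho_t$, and you do the variance identity fibrewise rather than via truncations of $u_t$. You also invoke the energy identity of Lemma \ref{lem:apr} to pass from $\int|v_t|^2\,\rd\rho_0$ to $\int|u_0|^2\,\rd\rho_0$. The paper's proof leaves that step implicit: it only records $\int{\rm tr}\,\theta_t\,\rd\rho_t\le\int|v_t|^2\,\rd\rho_0<\infty$.
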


\begin{remark}
If the Lagrangian flow map $\eta_t$ is injective (in fact, bi-measurable with measurable inverse on its image), then each fibre $\eta_t^{-1}(\{z\})$ is a singleton and hence $\nu_{t,z}=\delta_{\eta_t^{-1}(z)}$ for $\rho_t$-a.e.  $z$. Consequently, $\theta_t \equiv 0$ and thus $\tau_t \equiv 0$. Conversely, non-injectivity of $\eta_t$ does not necessarily imply that $\tau_t$ is nonzero: if all points in a fibre share the same velocity, the fibre variance still vanishes. Thus $\tau_t$ measures the velocity dispersion within fibres, not the geometric injectivity of $\eta_t$. In particular, the vanishing of $\tau_t$ is equivalent to the collapse of the disintegration to Dirac masses with zero fibre variance, whereas injectivity is only a sufficient condition for this to occur.
\end{remark}

\begin{proof}[Proof of Lemma \ref{lem:ERA-prep}]
We proceed in several steps. Throughout, $t\ge0$ is fixed.

\medskip
\noindent  {\bf Measurability and pushforwards.}
By Theorem \ref{thm:ext}, for each fixed $t\ge0$ the maps
$x\mapsto \eta_t(x)$ and $x\mapsto v_t(x)$ are Borel on $(\R^d,\rho_0)$, and
$v_t\in L^\infty(\rho_0;\R^d)$.  Hence the pushforward
\[
\rho_t(B):=\rho_0(\eta_t^{-1}(B))
\]
is a probability (Radon) measure on $\R^d$ for every Borel set $B$, and
\[
m_t(B):=\int_{\eta_t^{-1}(B)} v_t(x)\,\rho_0(\dx)
\]
defines a finite $\R^d$-valued Radon measure. 

\medskip
\noindent {\bf Absolute continuity $m_t\ll\rho_t$ and definition of $u_t$.}
Let $A\subset\R^d$ be Borel with $\rho_t(A)=0$. Then $\rho_0(\eta_t^{-1}(A))=0$ by definition of pushforward. Consequently,
\[
|m_t|(A) =\int_{\eta_t^{-1}(A)} |v_t(x)|\,\rho_0(\dx) \leq \|v_t\|_{L^\infty(\rho_0)}\int_{\eta_t^{-1}(A)} \rho_0(\dx) =0,
\]
due to $v_t\in L^\infty(\rho_0)$.
Thus $m_t\ll \rho_t$. By the Radon--Nikodym theorem, there exists a (class of) Borel function(s)
$u_t\in L^1(\rho_t;\R^d)$ such that
\[
m_t = u_t\,\rho_t, \quad\text{i.e.,}\quad u_t = \frac{\rd m_t}{\rd \rho_t}\ \ \rho_t\text{-a.e.}
\]

\medskip
\noindent {\bf $L^\infty$ bound on $u_t$ and barycentric identity.}
Apply Theorem \ref{thm:disint} with 
\[
X=\R^d,\quad \bar X=\R^d,\quad \mu=\rho_0,\quad \pi=\eta_t,
\]
so that the pushforward measure is $\nu=\pi_\#\mu=\rho_t$.  This yields a $\rho_t$-a.e.  uniquely determined Borel family of probability measures 
$\{\nu_{t,z}\}_{z\in\supp(\rho_t)}$ such that
\begin{equation*}
\rho_0=\intr \nu_{t,z}\,\rho_t(\dz), \quad \supp\,\nu_{t,z}\subset   \eta_t^{-1}(\{z\}).
\end{equation*}
Define the fibre barycentre 
\[
\bar v_t(z):=\intr v_t(x)\,\nu_{t,z}(\dx),\quad \rho_t\text{-a.e. }z.
\]
Since $v_t\in L^\infty(\rho_0)$, there exists a $\rho_0$-null set $N_t$ such that
\[
|v_t(x)|  \le \|v_t\|_{L^\infty(\rho_0)} \le \|u_0\|_{L^\infty(\rho_0)}   \quad \text{for all } x \in \R^d \setminus N_t
\]
due to \eqref{eq:v-Linf-decay}. By the disintegration identity, we get
\[
0 = \rho_0(N_t) = \intr \nu_{t,z}(N_t) \,\rho_t(\dz),
\]
and thus, $\nu_{t,z}(N_t) = 0$ for $\rho_t$-a.e. $z$. Hence, for $\rho_t$-a.e $z$
\[
|\bar v_t(z)| \le \intr |v_t(x)|\,\nu_{t,z}(\dx)  \leq \|v_t\|_{L^\infty(\rho_0)} \le \|u_0\|_{L^\infty(\rho_0)}.  
\]
In particular, $\bar v_t\in L^\infty(\rho_t)$ and
\[
\|\bar v_t(z)\|_{L^\infty(\rho_t)} \le \|u_0\|_{L^\infty(\rho_0)}.  
\]

Next, for any $F\in C_b(\R^d)$, by Fubini and disintegration,
\[
\intr F(z)\,\bar v_t(z)\,\rho_t(\dz) = \intr \left(\intr F(z)\,v_t(x)\,\nu_{t,z}(\dx)\right)\rho_t(\dz) = \intr F(\eta_t(x))\,v_t(x)\,\rho_0(\dx).
\]
On the other hand, for the vector measure $m_t$,
\[
\intr F(z)\,m_t(\dz) = \intr F(\eta_t(x)) v_t(x)\,\rho_0(\dx).
\]
Since $m_t=u_t\,\rho_t$ and $F$ is arbitrary in $C_b$, we conclude that $u_t=\bar v_t$ $\rho_t$-a.e., and
\bq\label{eq:bary-identity-lemma}
\intr  F(z)\,u_t(z)\,\rho_t(\dz) = \intr  F(\eta_t(x))\,v_t(x)\,\rho_0(\dx),\quad \forall\,F\in C_b(\R^d).
\eq

\medskip
\noindent {\bf Fibre covariance and definition of $\theta_t$, $\tau_t$.} With the disintegration $\{\nu_{t,z}\}$ fixed as above, define for $\rho_t$-a.e.  $z$,
\[
\theta_t(z):=\intr (v_t(x)-u_t(z))\otimes (v_t(x)-u_t(z))\,\nu_{t,z}(\dx)\in\R^{d\times d}.
\]
Measurability of $z\mapsto\theta_t(z)$ follows from the standard kernel measurability of disintegrations and the fact that $(z,x)\mapsto (v_t(x)-u_t(z))\otimes(v_t(x)-u_t(z))$ is Borel and integrably bounded. Define the matrix-valued measure $\tau_t:=\rho_t\,\theta_t$, i.e.,
\[
\intr \Psi : \rd\tau_t := \intr \Psi(z):\theta_t(z)\,\rho_t(\dz)
\quad\text{for all }\Psi\in C_b(\R^d;\R^{d\times d}).
\]
Each $\theta_t(z)$ is a covariance matrix, hence symmetric and positive semidefinite; therefore $\tau_t$ is a finite symmetric positive semidefinite matrix-valued Radon measure.

\medskip
\noindent {\bf Finiteness of $\tau_t$ and the variance identity.}
Finiteness follows from the trace estimate. Using the identity
\[
\intr {\rm tr}\,\theta_t(z)\,\rho_t(\dz)
= \intrr |v_t(x)-u_t(z)|^2\,\nu_{t,z}(\dx) \rho_t(\dz),
\]
expand the square and use \eqref{eq:bary-identity-lemma} with $F\equiv 1$ and $F\equiv u_t$ (the latter obtained by approximating $u_t$ with its truncations $u_t^R:=u_t {\bf 1}_{{|u_t|\le R}}$ and passing to the limit via dominated convergence and the $L^2$-isometry $\|g\circ\eta_t\|_{L^2(\rho_0)} = \|g\|_{L^2(\rho_t)}$) to obtain the variance decomposition
\[
\intr  {\rm tr}\,\theta_t(z)\, \rho_t(\dz) = \intr |v_t(x)|^2\,\rho_0(\dx) - \intr |u_t(z)|^2\,\rho_t(\dz)
\le \intr |v_t|^2\,\rho_0(\dx) < \infty.
\] 
Thus $\tau_t$ is finite. More generally, for any $\Psi\in C_b(\R^d;\R^{d\times d})$, expanding fibrewise yields
\begin{align*}
\intr \Psi:\theta_t\,\rho_t(\dz)
&= \intr \Psi(\eta_t(x)):(v_t\otimes v_t)\,\rho_0(\dx)
 - \intr \Psi(z): (u_t(z)\otimes u_t(z))\,\rho_t(\dz)\\
&= \intr \Psi(\eta_t(x)):\lt(v_t\otimes v_t - u_t(\eta_t(x))\otimes u_t(\eta_t(x))\rt)\rho_0(\dx),
\end{align*}
which is the claimed identity (independence of the choice of disintegration follows from equality of all integrals against test fields $\Psi$).

\medskip
\noindent {\bf Initial traces and positivity.}
At $t=0$, $\eta_0={\rm id}$; therefore each fibre $\eta_0^{-1}(\{z\})=\{z\}$ is a singleton and $\nu_{0,z}=\delta_z$. Hence,
\[
u_{t=0}(z)=\intr v_0(x)\,\delta_z(\dx)=v_0(z)=u_0(z),
\]
\[
\theta_{t=0}(z)=\intr (v_0(x)-u_0(z))\otimes(v_0(x)-u_0(z)) \,\delta_z(\dx)=0,
\]
and thus $\tau_{t=0}=0$. Positivity of $\tau_t$ follows from positivity of $\theta_t(z)$ for $\rho_t$-a.e. $z$.
\end{proof}

 We next provide the temporal regularity of the Eulerian quantities $(\rho_t,m_t)$  constructed in Lemma \ref{lem:ERA-prep}. Since the Lagrangian flow map $t\mapsto\eta_t$ and velocity $t\mapsto v_t$ are continuous in $L^\infty(\rho_0)$ by Theorem \ref{thm:ext}, one expects the corresponding pushforward objects to vary continuously in time. This is essential for formulating the Eulerian system in a weak sense.

\begin{lemma} \label{lem:cont}
Assume $p\geq 2$ and the hypotheses of Theorem \ref{thm:ext}.  Let $(\rho_t,m_t)$ be the Eulerian quantities associated with the Lagrangian solution $(\eta_t,v_t)$ as in Lemma \ref{lem:ERA-prep}. Then the following properties hold:
\begin{align*}
t &\mapsto \rho_t \quad\text{is narrowly continuous in }\calP(\R^d), \cr
t &\mapsto m_t \quad\text{is weak$^\ast$ continuous in }\calM(\R^d;\R^d).
\end{align*}
\end{lemma}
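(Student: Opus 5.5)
The plan is to test against a bounded Lipschitz function and use the fact that $t\mapsto(\eta_t,v_t)$ is continuous (indeed $C^1$) into $L^\infty(\rho_0)$, by Theorem \ref{thm:ext}. For $\rho_t$ I would first take $F\in C_b(\R^d)$ Lipschitz. By the pushforward definition,
\[
\Big|\intr F\,\rd\rho_t-\intr F\,\rd\rho_s\Big| = \Big|\intr \big(F(\eta_t(x))-F(\eta_s(x))\big)\rho_0(\dx)\Big| \le \|F\|_{\rm Lip}\,\|\eta_t-\eta_s\|_{L^\infty(\rho_0)}.
\]
Since $\eta_t-\eta_s=(\eta_t-{\rm id})-(\eta_s-{\rm id})$ and $\eta-{\rm id}\in C^1([0,\infty);L^\infty(\rho_0))$, this difference is bounded by $|t-s|\,\|u_0\|_{L^\infty(\rho_0)}$ (using $\|v\|_{L^\infty}\le\|u_0\|_{L^\infty}$). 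So the pairing is continuous, even Lipschitz, in $t$ for every bounded Lipschitz $F$. As a byproduct, $t\mapsto\rho_t$ is Lipschitz in the bounded-Lipschitz distance.

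To pass from Lipschitz to general $F\in C_b$, I would use that convergence against bounded Lipschitz test functions characterizes narrow convergence of probability measures (Portmanteau). The step needing a little care is that $\rho_0$ need not be compactly supported here, so tightness is not automatic. I would handle this either by citing the Portmanteau theorem directly, or by an explicit argument: given $\varepsilon>0$, pick a compact $K$ with $\rho_0(K^c)<\varepsilon$. Then $\rho_t$ assigns mass at least $1-\varepsilon$ to the $R$-neighbourhood of $K$ for $t$ in a bounded interval, with $R=T\|u_0\|_{L^\infty}$. On that compact neighbourhood, uniform continuity of $F$ lets one replace $F$ by a Lipschitz approximation up to an error $\varepsilon$. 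This gives narrow continuity of $t\mapsto\rho_t$.

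For $m_t$ the test functions are $F\in C_0(\R^d)$ (or $C_b$, which is stronger). I would split
\[
\intr F\,\rd m_t-\intr F\,\rd m_s = \intr \big(F(\eta_t)-F(\eta_s)\big)v_t\,\rho_0(\dx) + \intr F(\eta_s)(v_t-v_s)\,\rho_0(\dx).
\]
The second term is bounded by $\|F\|_\infty\|v_t-v_s\|_{L^\infty(\rho_0)}$, which tends to $0$ by continuity of $v$ in $L^\infty(\rho_0)$. The first term is bounded by $\|u_0\|_{L^\infty}\int|F(\eta_t)-F(\eta_s)|\,\rd\rho_0$. For Lipschitz $F$ this is again $O(|t-s|)$. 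For general $F\in C_0$ I would use density of Lipschitz functions in $C_0$ in the sup norm, together with the uniform mass bound $|m_t|(\R^d)\le\|u_0\|_{L^\infty(\rho_0)}$. A uniformly bounded family that is continuous on a dense set of test functions is continuous on all of them, which gives weak$^\ast$ continuity.

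I expect no serious obstacle. The only subtle point is the density/tightness argument that upgrades Lipschitz test functions to $C_b$ in the narrow topology. The displacement bound $\|\eta_t-\eta_s\|_{L^\infty}\le|t-s|\,\|u_0\|_{L^\infty}$ handles it as described above.
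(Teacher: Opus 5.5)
Your proof is correct. For $m_t$ it follows the paper's argument: the same split into a displacement term and a velocity term, estimated with Lipschitz test functions. The difference is in how you reach general test functions.

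\textbf{The paper's route.} For $\rho_t$, the paper never restricts to Lipschitz $F$. Since $\|w_s-w_t\|_{L^\infty(\rho_0)}\to0$, along any sequence $s_n\to t$ one has $\eta_{s_n}(x)\to\eta_t(x)$ for $\rho_0$-a.e.\ $x$. Hence $F(\eta_{s_n})\to F(\eta_t)$ a.e.\ for \emph{every} $F\in C_b(\R^d)$, and dominated convergence gives narrow continuity in one line. No Portmanteau or localization step is needed. The tightness issue you flagged is not an obstruction in any case: convergence against bounded Lipschitz functions characterizes narrow convergence of probability measures on any metric space, without tightness.

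\textbf{Comparison for $m_t$.} Dominated convergence applied to the first term $\int (F(\eta_t)-F(\eta_s))\cdot v_t\,\rho_0(\dx)$ would let you treat all $F\in C_b$ for $m_t$ directly. The paper instead tests $m_t$ only against $W^{1,\infty}$ functions and leaves the passage to $C_0$ implicit. Your density argument, with the uniform bound $|m_t|(\R^d)\le\|u_0\|_{L^\infty(\rho_0)}$, makes that step explicit.

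\textbf{What your route buys.} It gives quantitative information: $\rd_{\rm BL}(\rho_t,\rho_s)\le\|u_0\|_{L^\infty(\rho_0)}|t-s|$. Using $\|\pa_t v\|_{L^\infty(\rho_0)}\le C$, you also get a Lipschitz-in-time bound for $m_t$ in the bounded-Lipschitz norm. Both are stronger than the qualitative continuity the lemma states.

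\textbf{One small caution.} Your parenthetical ``or $C_b$, which is stronger'' does not follow by density. Lipschitz functions are not sup-norm dense in $C_b(\R^d)$, since uniform limits of Lipschitz functions are uniformly continuous. For $C_b$ test functions against $m_t$ you would need either the localization argument you gave for $\rho_t$ or dominated convergence. For the weak$^\ast$ statement in $\calM(\R^d;\R^d)$, the class $C_0$ is what is required, and your argument covers it.
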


\begin{proof}
Fix $t>0$. Since $t\mapsto w_t:= \eta_t - {\rm id}$ is continuous in $L^\infty(\rho_0)$, we have
\[
\lim_{s\to t}\|w_s-w_t\|_{L^\infty(\rho_0)}=0,
\]
hence for $\rho_0$-a.e.  $x$,
$
\eta_s(x)=x+w_s(x)\to x+w_t(x)=\eta_t(x).
$
Let $\xi \in C_b(\R^d)$. Then
\[
\intr \xi\,\rho_s(\dx) = \intr \xi(\eta_s(x))\,\rho_0(\dx).
\]
Since $\xi$ is bounded and $\xi(\eta_s(x))\to \xi(\eta_t(x))$ for $\rho_0$-a.e.  $x$, the dominated convergence theorem yields
\[
\intr \xi(\eta_s(x))\,\rho_0(\dx)\to \intr \xi(\eta_t(x))\,\rho_0(\dx), \quad \text{i.e., $t\mapsto\rho_t$ is narrowly continuous.}
\]

For $m_t$, take $\varphi\in W^{1,\infty}(\R^d;\R^d)$. Then, we obtain
\begin{align*}
\lt| \intr \varphi\cdot (m_s(\dx) - m_t(\dx))  \rt| &= \lt|\intr \lt(\varphi(\eta_s(x))\cdot v_s(x) - \varphi(\eta_t(x))\cdot v_t(x)\rt) \rho_0(\dx)  \rt| \cr
&\leq \intr |\varphi(\eta_s)-\varphi(\eta_t)|\,|v_s|\, \rho_0(\dx) + \intr |\varphi(\eta_t)|\,|v_s-v_t|\, \rho_0(\dx)\cr
&\leq \|\varphi\|_{\rm Lip}\|v_s\|_{L^\infty(\rho_0)} \|w_s-w_t\|_{L^\infty(\rho_0)} + \|\varphi\|_{L^\infty}\|v_s-v_t\|_{L^\infty(\rho_0)}\cr
&\to 0 \quad \mbox{as } s\to t.
\end{align*}
Thus, $t\mapsto m_t$ is weak$^\ast$ continuous. This completes the proof.
\end{proof}

%
%
%
%
%

\subsection{Global Eulerian solutions}

In this subsection, we verify that the Eulerian objects $(\rho_t,u_t,\tau_t)$ constructed in Lemma \ref{lem:ERA-prep} indeed satisfy the ERA system \eqref{eq:ERA} in the sense of distributions.
The construction relies exclusively on the underlying Lagrangian flow $(\eta_t,v_t)$ provided by Theorem \ref{thm:ext} without \emph{a priori} Eulerian regularity assumptions. We also show that this Eulerian solution satisfies the corresponding global energy inequality and is unique within the class of Eulerian triples induced by Lagrangian solutions of Theorem \ref{thm:ext}, thereby completing the proof of Theorem \ref{thm:ERA}.
 
\begin{proof}[Proof of Theorem \ref{thm:ERA}]
Let $(\eta_t,v_t)$ be the global Lagrangian solution provided by Theorem \ref{thm:ext}. All Eulerian objects $\rho_t$, $m_t$, $u_t$, $\theta_t$ and $\tau_t:=\rho_t\,\theta_t$ are already constructed in Lemma \ref{lem:ERA-prep}, where we also recorded: $m_t\ll\rho_t$ with $u_t=\frac{\rd m_t}{\rd\rho_t}\in L^\infty(\rho_t)$, $\tau_t$ is symmetric positive semidefinite and finite, and the barycentric identity
\bq\label{eq:bary-identity-proof}
\intr F(z) u_t(z)\,\rho_t(\dz) =\intr F(\eta_t(x)) v_t(x)\,\rho_0(\dx) \quad\text{for all bounded Borel }F:\R^d\to\R^d,
\eq
together with the fibre support property of the disintegration $\{\nu_{t,z}\}$.

Fix $T>0$. In what follows, we take test functions
\[
\xi\in C_c^\infty((0,T)\times\R^d),\quad \varphi\in C_c^\infty((0,T)\times\R^d;\R^d),
\]
so that no boundary terms in time appear.  

\medskip
\noindent {\bf  Step 1: Continuity equation in the sense of distributions.} By Theorem \ref{thm:ext}, $t\mapsto \eta_t- {\rm id}$ and $t\mapsto v_t$ are $C^1$ in $L^\infty(\rho_0)$, hence $t\mapsto \intr \xi(t,\eta_t(x))\,\rho_0(\dx)$ is absolutely continuous and
\[
\frac{\rd}{\dt}\intr \xi(t,z)\,\rho_t(\dz)=\frac{\rd}{\dt}\intr \xi(t,\eta_t(x))\,\rho_0(\dx) = \intr \lt(\pa_t\xi+\nabla\xi\cdot v_t\rt)(t,\eta_t(x))\,\rho_0(\dx).
\]
Using \eqref{eq:bary-identity-proof} with $F=\nabla\xi(t,\cdot)$, we obtain
\[
\frac{\rd}{\dt}\intr \xi(t,z)\,\rho_t(\dz)
=\intr (\pa_t\xi) \rho_t(\dz)+\intr (\nabla\xi)\cdot u_t\,\rho_t(\dz).
\]
Integrating in time over $(0,T)$ and using $\xi\in C_c^\infty((0,T)\times\R^d)$ yield
\[
\int_0^T\intr (\pa_t\xi) \rho_t(\dz)dt+\int_0^T\intr (\nabla\xi)\cdot u_t\,\rho_t(\dz)dt=0 ,
\]
which is the distributional form of $\pa_t\rho+\nabla \cdot(\rho u)=0$ on $(0,T)\times\R^d$.

\medskip
\noindent {\bf  Step 2: Momentum equation in the sense of distributions.} Define for $t \in (0,T)$
\[
I(t):=\intr \varphi(t,\eta_t(x))\cdot v_t(x)\,\rho_0(\dx)=\intr \varphi(t,z)\cdot m_t(\dz).
\]
Differentiating in $t$ and using $\pa_t\eta_t=v_t$ and the Lagrangian equation for $v_t$,
\[
\pa_t v_t(x)=\kappa\intr \phi(\eta_t(x)-\eta_t(y)) G_p(v_t(y)-v_t(x))\,\rho_0(\dy),
\]
we get
\begin{align*}
\frac{\rd}{\dt} I(t) &=\intr \pa_t\varphi(t,\eta_t(x))\cdot v_t(x)\,\rho_0(\dx)
 +\intr \nabla\varphi(t,\eta_t(x)) : (v_t(x)\otimes v_t(x))\,\rho_0(\dx)\\
&\quad +\kappa\intrr \varphi(t,\eta_t(x))\cdot \phi(\eta_t(x)-\eta_t(y))G_p(v_t(y)-v_t(x))\,\rho_0(\dy) \rho_0(\dx).
\end{align*}
Antisymmetrizing the last term via the evenness of $\phi$ (swap $(x,y)$ and average) yields
\begin{equation}\label{new}  
\frac12\intrr \phi(\eta_t(x)-\eta_t(y)) \lt(\varphi(t,\eta_t(x))-\varphi(t,\eta_t(y))\rt)\cdot G_p(v_t(y)-v_t(x))\,\rho_0(\dy) \rho_0(\dx).
\end{equation}
We now pass to Eulerian variables in each term.

\smallskip
\noindent  \emph{Step 2.(a) Transport and quadratic terms.}
By \eqref{eq:bary-identity-proof} with $F=\pa_t\varphi(t,\cdot)$, 
\[
\intr \pa_t\varphi(t,\eta_t)\cdot v_t\,\rho_0(\dx)
=\intr \pa_t\varphi(t,z)\cdot u_t(z)\,\rho_t(\dz)=\intr \pa_t\varphi\cdot m_t(\dz).
\]
For the quadratic term we use the macro/fluctuation decomposition (Lemma \ref{lem:ERA-prep}): writing
\[
v_t=u_t\circ\eta_t+\omega_t \quad \text{with }\intr \omega_t\,\nu_{t,z}(\dx)=0
\]
on each fibre, expanding $v_t\otimes v_t$ and integrating fibrewise gives
\[
\intr \nabla\varphi(t,\eta_t): v_t\otimes v_t\,\rho_0(\dx) =\intr \nabla\varphi(t,z): u_t\otimes u_t \, \rho_t(\dz) + \intr \nabla\varphi(t,z): \tau_t (\dz).
\]

\smallskip
\noindent \emph{Step 2.(b) Nonlocal alignment term.}
Disintegrate the product measure 
\[
\rho_0(\dx)\rho_0(\dy)=\nu_{t,z}(\dx)\,\nu_{t,\zeta}(dy)\,\rho_t(\dz)\rho_t(\rd\zeta)
\] 
and write, for $x \in \supp \nu_{t,z}$, $y \in \supp \nu_{t,\zeta}$,
\[
v_t(x) = u_t(z) + \omega_t(x), \quad v_t(y) = u_t(\zeta) + \omega_t(y),
\]
where the fibre fluctuations satisfy
\[
\intr \omega_t(x) \nu_{t,z}(\dx) = 0, \quad \intr \omega_t(y) \nu_{t,\zeta}(\dy) = 0.
\]
Then, on each pair of fibres $(z,\zeta)$,
\begin{align*}
&\int_{\eta_t^{-1}(z)} \int_{\eta_t^{-1}(\zeta)} G_p (v_t(y)-v_t(x))\,\nu_{t,z}(\dx) \nu_{t,\zeta}(\dy) \cr
&\quad= \int_{\eta_t^{-1}(z)} \int_{\eta_t^{-1}(\zeta)}
G_p (u_t(\zeta)-u_t(z) + \omega_t(y)-\omega_t(x))\,\nu_{t,z}(\dx) \nu_{t,\zeta}(\dy).
\end{align*}
Adding and subtracting $G_p(u_t(\zeta)-u_t(z))$ inside the integral, we get
\[
\int_{\eta_t^{-1}(z)} \int_{\eta_t^{-1}(\zeta)} G_p (v_t(y)-v_t(x))\,\nu_{t,z}(\dx) \nu_{t,\zeta}(\dy) = G_p(u_t(\zeta)-u_t(z)) + \calK_t(z,\zeta),
\]
where $\calK$ is given as in \eqref{calK}. Hence, the antisymmetrized nonlocal term \eqref{new} equals tp
\[
\intr \varphi(t,z)\cdot \lt(\kappa\intr \phi(z-\zeta)
\lt( G_p (u_t(\zeta)-u_t(z)) + \calK_t(z,\zeta)\rt)\rho_t(\rd\zeta)\rt)\rho_t(\dz).
\]
 
Collecting (a)--(b) gives, for all $\varphi\in C_c^\infty((0,T)\times\R^d;\R^d)$,
\begin{align*}
\frac{\rd}{\dt}\intr \varphi \cdot \,m_t(\dz) &=\intr \pa_t\varphi\cdot m_t(\dz) +\intr \nabla\varphi: u_t\otimes u_t \rho_t(\dz) +\intr \nabla\varphi:  \tau_t(\dz) \cr
&\quad +\intr \varphi(t,z)\cdot \lt(\kappa\intr \phi(z-\zeta)
\lt( G_p (u_t(\zeta)-u_t(z)) + \calK_t(z,\zeta)\rt)\rho_t(\rd\zeta)\rt)\rho_t(\dz),
\end{align*}
and integrating in time over $(0,T)$ yields the distributional form of
\[
\pa_t(\rho u)+\nabla \cdot(\rho u\otimes u+\tau) =\rho (\calA_p[\rho,u]+\calR_p[\rho,u]).
\]
Since $T>0$ was arbitrary, both distributional identities hold on $(0,\infty)$.

\medskip
\noindent {\bf Step 3: Energy inequality.} 
Since $(\eta_t,v_t)$ is a global classical solution of the Lagrangian $p$-alignment system \eqref{Lag_p}, Lemma \ref{lem:apr} yields for all $t\geq 0$,
\begin{align}\label{lag-energy}
\begin{aligned}
&\frac12 \intr |v_t|^2\,\rho_0(\dx)+\frac{\kappa}{2} \int_0^t \intrr
\phi(\eta_s(x)-\eta_s(y)) |v_s(x)-v_s(y)|^p\,\rho_0(\dx)\rho_0(\dy)\ds \cr
&\quad \le \frac12 \intr |u_0|^2\,\rho_0(\dx).
\end{aligned}
\end{align}
 
Using the fibre decomposition
\[
v_t(x)=u_t(z)+\omega_t(x), \quad z=\eta_t(x),
\]
with zero fibre mean
\[
\intr \omega_t(x)\,\nu_{t,z}(\dx)=0,
\]
we compute
\[
|v_t(x)|^2=|u_t(z)|^2+2u_t(z)\cdot \omega_t(x)+|\omega_t(x)|^2.
\]
Integrating first with respect to $\nu_{t,z}$ and using the zero-mean property of $\omega_t$, we obtain
\[
\intr |v_t(x)|^2\,\nu_{t,z}(\dx) = |u_t(z)|^2 + \intr |\omega_t(x)|^2\,\nu_{t,z}(\dx).
\]
Integrating next with respect to $\rho_t(\dz)$ and recalling that
\[
\tr\,\theta_t(z)=\intr|\omega_t(x)|^2\,\nu_{t,z}(\dx),
\]
we arrive at
\begin{equation}\label{eq:v-decompose}
\intr  \tr\,\theta_t(z)\, \rho_t(\dz) = \intr |v_t(x)|^2\,\rho_0(\dx) - \intr |u_t(z)|^2\,\rho_t(\dz),
\end{equation}
and thus,
\[
\intr \left(|u_t|^2 + \tr\,\theta_t(z)\right)\rho_t(\dz) = \intr|v_t|^2\,\rho_0(\dx).
\]

Using the fibre-variance identity from Lemma \ref{lem:ERA-prep}, we have
\begin{equation}\label{eq:v-decompose}
\intr  {\rm tr}\,\theta_t(z)\, \rho_t(\dz) = \intr |v_t(x)|^2\,\rho_0(\dx) - \intr |u_t(z)|^2\,\rho_t(\dz),
\end{equation}
and thus,
\[
\intr \left(|u_t|^2+{\rm tr}\,\theta_t(z)\right)\rho_t(\dz) = \intr|v_t|^2\,\rho_0(\dx).
\]
Substituting this identity into \eqref{lag-energy}, while writing $\eta_r(x)=z$ and $\eta_r(y)=\zeta$ and disintegrating $\rho_0(\dx)\rho_0(\dy)$ along the fibres, yields the Eulerian energy inequality \eqref{ERA_energy}.

\medskip
\noindent {\bf Step 4: Regularity and uniqueness in the Lagrangian--compatible class.}
By Theorem \ref{thm:ext} and Lemma \ref{lem:cont},  $t\mapsto \rho_t$ is narrowly continuous and $t\mapsto m_t$ is weak$^\ast$ continuous. All structural properties of $u_t$ and $\tau_t$ are those listed in Lemma \ref{lem:ERA-prep}.

Finally, if $(\tilde\rho,\tilde u,\tilde\tau)$ is obtained from another Lagrangian solution with the same initial data $(\rho_0,u_0)$, uniqueness for the Lagrangian ODE (Theorem \ref{thm:ext}) gives $(\eta,v)=(\tilde\eta,\tilde v)$, hence $\rho=\tilde\rho$, $m=\tilde m$, and by the barycentric/variance identities $u=\tilde u$, $\tau=\tilde\tau$. This completes the proof.
\end{proof}

%
%
%
%
%

\section{Asymptotic closure of Euler--Reynolds--alignment to Euler--alignment under flocking}\label{sec:ac_ERA}

In the previous sections, we derived the ERA system induced by the global Lagrangian flow and identified the two defect terms preventing macroscopic closure: the Reynolds stress $\tau_t$ and, for $p>2$, the nonlinear defect force $\calR_p[\rho_t,u_t]$. Both terms arise from microscopic velocity fluctuations along Lagrangian fibres. Equivalently, they vanish precisely in the mono-kinetic regime, where the Eulerian velocity is uniquely determined at each spatial point.

The purpose of this section is twofold. First, we show that under the sole assumption of velocity flocking, the two defect terms in the Euler--Reynolds--alignment system, the Reynolds stress $\tau_t$ and the nonlinear defect force $\calR_p[\rho_t,u_t]$, vanish asymptotically as $t\to\infty$. This yields an asymptotic suppression of the obstruction to Eulerian macroscopic closure, without requiring injectivity of the flow map, spatial confinement, or additional Eulerian regularity.

Second, using the kinetic lifting of the Lagrangian flow and a compactness argument for time-translates, we show that any subsequential long-time limit is mono-kinetic and is transported by the conserved mean velocity $\bar u$.

%
%
%
%
%

\subsection{Decay of Reynolds stress and nonlinear defect force}

We begin by showing that velocity flocking forces the decay of both defect terms in the ERA system. More precisely, the uniform decay of the velocity diameter implies that velocities along the
same Lagrangian fibre become asymptotically indistinguishable. As a consequence, the fibrewise velocity variance encoded in $\tau_t$ vanishes, and the nonlinear defect force $\calR_p$, which measures the mismatch between nonlinear alignment interactions and their barycentric approximation, disappears as well.

The following lemma provides the quantitative formulation of this decay at the level of distributions.

\begin{lemma}\label{lem:asymp_closure}
Let $p \geq 2$, assume the hypotheses of Theorem \ref{thm:ext}, and let $(\rho_t,u_t,\tau_t)$ be the Eulerian objects induced by the Lagrangian solution as in Theorem \ref{thm:ERA}. Then the Reynolds stress vanishes asymptotically in the sense that
\[
\|\tau_t\|_{\calM(\R^d;\R^{d\times d})} \to 0 \quad \text{as }t\to\infty.
\]
In addition, when $p>2$, the nonlinear defect force also vanishes asymptotically:
\[
\|\rho_t \calR_p[\rho_t,u_t]\|_{\calM(\R^d;\R^d)} \to 0 \quad \text{as }t\to\infty.
\]
Moreover, the Eulerian velocity diameter decays to zero in the essential sense:
\[
\esssup_{(z,\zeta)\in S_t\times S_t}|u_t(z)-u_t(\zeta)|
\to 0 \quad \text{as }t\to\infty, \qquad S_t=\text{supp}\,\rho_t.
\]
\end{lemma}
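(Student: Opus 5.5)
The plan is to reduce every claim to the decay of the Lagrangian velocity diameter $\rd_v(t)$, which tends to zero under the standing hypotheses ($\rho_0$ compactly supported and the heavy-tail condition \eqref{condi_phi}), by Theorem \ref{thm:ext}. Whatever the precise hypotheses, the only input used is $\rd_v(t)\to0$. All the defect quantities are fibrewise averages of velocity differences. Because $\nu_{t,z}$ does not charge $\rho_0$-null sets for $\rho_t$-a.e. $z$ (the argument in Lemma \ref{lem:ERA-prep}), each such difference is bounded pointwise by $\rd_v(t)$ off a null set.

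\textbf{Step 1 (fluctuations).} Since $u_t(z)=\intr v_t(x')\,\nu_{t,z}(\rd x')$ is a $\nu_{t,z}$-average, we get
\[
|\omega_t(x)|=|v_t(x)-u_t(\eta_t(x))|\le \rd_v(t)
\]
for $\rho_0$-a.e. $x$, and hence for $\nu_{t,z}$-a.e. $x$ for $\rho_t$-a.e. $z$. The same reasoning applied to two fibres gives
\[
|u_t(z)-u_t(\zeta)|\le \iint |v_t(x)-v_t(y)|\,\nu_{t,z}(\dx)\nu_{t,\zeta}(\dy)\le \rd_v(t)
\]
for $\rho_t\otimes\rho_t$-a.e. $(z,\zeta)$. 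This is exactly the essential-sup statement for the Eulerian velocity diameter. Some care is needed in choosing the null sets. We fix one $\rho_0$-null set $N_t$ outside which $|v_t(x)-v_t(y)|\le\rd_v(t)$ for $\rho_0\otimes\rho_0$-a.e. pairs; using Fubini on the product disintegration, the exceptional set of pairs $(z,\zeta)$ is then $\rho_t\otimes\rho_t$-null.

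\textbf{Step 2 (Reynolds stress).} Since $\theta_t(z)\succeq0$, the total variation of $\tau_t$ is controlled by the trace:
\[
\|\tau_t\|_{\calM}\le C_d\intr \tr\,\theta_t\,\rho_t(\dz)=C_d\intr|\omega_t|^2\,\rho_0(\dx)\le C_d\,\rd_v(t)^2\to0 .
\]
Here the equality is the variance identity \eqref{eq:v-decompose}, and $C_d$ depends only on the choice of matrix norm.

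\textbf{Step 3 (defect force, $p>2$).} We use the local Lipschitz bound from the existence proof,
\[
|G_p(a)-G_p(b)|\le C_p(|a|+|b|)^{p-2}|a-b|,
\]
with $a=u_t(\zeta)-u_t(z)+\omega_t(y)-\omega_t(x)$ and $b=u_t(\zeta)-u_t(z)$. Then $|a|\le \rd_v(t)$, $|b|\le\rd_v(t)$, and $|a-b|\le 2\rd_v(t)$, so
\[
|\calK_t(z,\zeta)|\le 2C_p\,2^{p-2}\rd_v(t)^{p-1}
\]
for a.e. pair. Consequently,
\[
\|\rho_t\calR_p\|_{\calM}\le\kappa\|\phi\|_{L^\infty}\iint|\calK_t|\,\rho_t\rho_t\lesssim \rd_v(t)^{p-1}\to0 .
\]
Even cruder bounds, such as $|G_p(\xi)|=|\xi|^{p-1}$ applied to each term, already suffice.

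\textbf{Main obstacle.} The analysis is routine once $\rd_v(t)\to0$ is available. The only delicate point is the measure-theoretic bookkeeping: making sure that the pointwise bounds on $\omega_t$ and on $v_t(x)-v_t(y)$ hold $\nu_{t,z}\otimes\nu_{t,\zeta}$-a.e. for $\rho_t\otimes\rho_t$-a.e. $(z,\zeta)$. This is handled by the null-set transfer through the disintegration identity $\rho_0\otimes\rho_0=\iint\nu_{t,z}\otimes\nu_{t,\zeta}\,\rho_t\rho_t$.
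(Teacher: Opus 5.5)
Your proof is correct. For the Eulerian velocity diameter and for the defect force you argue as the paper does: fibre averaging gives $|u_t(z)-u_t(\zeta)|\le\rd_v(t)$ and $|\omega_t|\le\rd_v(t)$, hence $|\calK_t|\lesssim\rd_v^{p-1}(t)$.

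For the Reynolds stress you take a different route. The paper does not use the pointwise bound on $\omega_t$ at this stage. Instead, it uses momentum conservation to rewrite the energy balance of Lemma \ref{lem:apr} as an evolution law for the fluctuation functional $\delE(t)=\intrr|v_t(x)-v_t(y)|^2\rho_0(\dx)\rho_0(\dy)$. H\"older then gives $\frac{\rd}{\dt}\delE\le-2\kappa\phi(\rd_\eta(t))\delE^{p/2}$, from which $\delE(t)\to0$. Finally it identifies $\delE(t)=\intrr\big(|u_t(z)-u_t(\zeta)|^2+\tr\,\theta_t(z)+\tr\,\theta_t(\zeta)\big)\rho_t(\dz)\rho_t(\rd\zeta)$, which yields $\|\tau_t\|\to0$ together with averaged Eulerian flocking.

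Your argument is more elementary. It gives the explicit rate $\|\tau_t\|\lesssim\rd_v(t)^2$ and uses only the single input $\rd_v(t)\to0$, so all three conclusions follow from one quantity. The paper's argument ties the collapse of $\tau_t$ to energy dissipation, in the $L^2$-averaged spirit of dissipative Eulerian solutions, and needs no fibrewise pointwise bounds. As written, it still relies on $\int_0^\infty\phi(\rd_\eta(s))\,\ds=\infty$, so it uses the same hypotheses as yours.

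One precision on the null-set issue you single out. Transferring through $\rho_0\otimes\rho_0=\iint\nu_{t,z}\otimes\nu_{t,\zeta}\,\rho_t(\dz)\rho_t(\rd\zeta)$ handles pairs of labels in two fibres, for $\rho_t\otimes\rho_t$-a.e.\ $(z,\zeta)$. It does not give the within-fibre bound $|\omega_t|\le\rd_v(t)$ that you use in Steps 1--2. The reason is that $\intr\nu_{t,z}\otimes\nu_{t,z}\,\rho_t(\dz)$ can be singular with respect to $\rho_0\otimes\rho_0$, for instance when a square collapses onto a segment.

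What makes Step 1 work is a genuinely single null set. Let $E_t:=\supp\,(v_t)_\#\rho_0$; then $\diam E_t=\rd_v(t)$. Hence $|v_t(x)-v_t(y)|\le\rd_v(t)$ for \emph{all} $x,y\notin N_t:=v_t^{-1}(\R^d\setminus E_t)$, and $\nu_{t,z}(N_t)=0$ for $\rho_t$-a.e.\ $z$.

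Alternatively, Step 2 can avoid the within-fibre bound altogether, because the fibre mean minimizes the fibre variance. With $\bar u=\intr v_t\,\rho_0(\dx)$ this gives $\intr\tr\,\theta_t\,\rho_t(\dz)\le\intr|v_t-\bar u|^2\rho_0(\dx)=\tfrac12\delE(t)\le\tfrac12\rd_v(t)^2$. This inequality is also exactly where your route and the paper's meet.
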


\begin{proof}
We follow the systematization argument of \cite[\S4]{Tad23},\cite[\S3]{Tadpre}, expressing  the energy balance stated in Lemma \ref{lem:apr}, as an equivalent  statement for the decay of \emph{velocity fluctuations}
\[
\frac14 \frac{\rd}{\dt} \intrr |v_t(x)-v_t(y)|^2\,\rho_0(\dx)\rho_0(\dy) + \frac\kappa2\intrr \phi(\eta_t(x)-\eta_t(y))|v_t(x) - v_t(y)|^p \,\rho_0(\dx) \rho_0(\dy) =0.
\]
By H\"{o}lder inequality (recall that $\rho_0$ is a probability measure)
\[
\frac{\rd}{\dt} \delE(t) \leq -2\kappa \phi(\rd_\eta(t)) \delE^{p/2}(t), \qquad \delE(t):=\intrr |v_t(x)-v_t(y)|^2\,\rho_0(\dx)\rho_0(\dy),
\]
and in view of our assumption, $\delE(t) \stackrel{t\rightarrow \infty}{\longrightarrow}0$.
The result follows by noting that $\delE(t)$ quantifies fluctuations of the Eulerian velocities; specifically
\begin{equation}\label{eq:delE-Eulerian}
\delE(t)=\intrr \Big(|u_t(x)-u_t(y)|^2 + {\rm tr}\, \theta_t(x) + {\rm tr}\, \theta_t(y)\Big)\rho_t(\dx)\rho_t(\dy).
\end{equation}
Indeed, $\displaystyle 
\delE(t)=\int \limits_{\R^d}|v_t(x)|^2\rho_0(\dx) -2\int \limits_{\R^d}  v_t(x)\rho_0(\dx)\int \limits_{\R^d}  v_t(y)\rho_0(\dy)+\int \limits_{\R^d}|v_t(y)|^2\rho_0(\dy)$.
For the two quadratic terms we use  the energy decomposition in \eqref{eq:v-decompose}
\[
\intr|v_t(z)|^2\rho_0(\dz)=\intr|u_t(z)|^2\rho_t(\dz)+\intr {\rm tr}\,\theta_t(z)\rho_t(\dz),
\]
while for the mixed term we use the barycentric identity 
\eqref{eq:bary-identity-proof} with $F\equiv1$
\[
\intr  v_t(x)\rho_0(\dx)\intr  v_t(y)\rho_0(\dy)=\intr  u_t(x)\rho_t(\dx)\intr  u_t(y)\rho_t(\dy),
\]
and \eqref{eq:delE-Eulerian} follows.
Hence, $\delE(t) \stackrel{t\rightarrow \infty}{\longrightarrow}0$ implies   an asymptotic mono-kinetic closure,  $\displaystyle \|\tau_t\|_{\calM(\R^d;\R^{d\times d})}=\intr {\rm tr}\,\theta_t(x)\rho_t(\dx) \stackrel{t\rightarrow \infty}{\longrightarrow}0$ (as well as average asymptotic  flocking $\displaystyle \intrr |u_t(x)-u_t(y)|^2 \rho_t(\dx)\rho_t(\dy)\stackrel{t\rightarrow \infty}{\longrightarrow}0$). 

For $p>2$, there is an additional nonlinear defect  kernel $\calK_t(z,\zeta)$ in \eqref{calK} 
\[
\calK_t(z,\zeta) =\intrr \big(G_p(a+b)-G_p(a)\big)\,\nu_{t,\zeta}(\dy) \nu_{t,z}(\dx),
\]
which involves the two terms  $a:=u_t(\zeta)-u_t(z)$ and $b:=\omega_t(y)-\omega_t(x)$. Since $u_t$ is the barycenter of $v_t$ along each fibre,
\[
|a| =|u_t(\zeta)-u_t(z)| =\lt|\intrr (v_t(y)-v_t(x))\,\nu_{t,\zeta}(\dy) \nu_{t,z}(\dx)\rt|
\le \rd_v(t).
\]
Hence, we have
\[
\esssup_{(z,\zeta)\in S_t\times S_t}|u_t(z)-u_t(\zeta)|
\to 0 \quad \text{as }t\to\infty, \qquad S_t=\text{supp}\,\rho_t.
\]
Likewise, for $x\in\supp\nu_{t,z}$ and $y\in\supp\nu_{t,\zeta}$, $|\omega_t(x)| = |v_t(x)-u_t(z)| \le \rd_v(t)$, hence
\[
|b| = |\omega_t(y)-\omega_t(x)| \le 2\rd_v(t).
\]
By Theorem \ref{thm:ext}, we have  $\rd_v(t)\stackrel{t\rightarrow\infty}{\longrightarrow}0$, and hence $G_p(a+b)$ and $G_p(a)$, each is of the vanishing order $\rd_v^{p-1}(t)\stackrel{t\rightarrow \infty}{\longrightarrow}0$. Therefore, 
\[
|\calK_t(z,\zeta)| \le|G_p(a+b)-G_p(a)| \le C_p\, \rd_v^{p-1}(t),
\]
 which in turn implies
$\displaystyle 
\|\rho_t\calR_p[\rho_t,u_t]\|_{\calM(\R^d;\R^d)}\leq \|\calR_p[\rho_t,u_t]\|_{L^\infty} \lesssim \rd_v^{p-1}(t)\stackrel{t\rightarrow \infty}{\longrightarrow}0$.
\end{proof}
 
%
%
%
%
%

\subsection{Asymptotic mono-kinetic dynamics}

Lemma \ref{lem:asymp_closure} already proves the first assertion of Theorem \ref{thm:asymp_closure}, namely the asymptotic vanishing of the Reynolds stress and the nonlinear defect force. It remains to establish the second assertion, concerning the structure of subsequential long-time limits of the lifted kinetic measure.

To this end, we consider time-translates of the kinetic lifting and pass to the limit on a fixed time window. The key point is that velocity flocking forces the velocity marginal to collapse to the Dirac mass $\delta_{\bar u}$, so that every such long-time limit is mono-kinetic and transported by the conserved mean velocity.

\begin{proof}[Proof of Theorem \ref{thm:asymp_closure}]
The first assertion is exactly Lemma \ref{lem:asymp_closure}. We therefore only prove the second assertion. We proceed in several steps.

\medskip
\noindent\textbf{Step 1. Time translation and kinetic weak formulation on a fixed window.}
Fix $T>0$. For each $R>0$, introduce the translated lifted measure
\[
\mu^{(R)}_s := (\eta_{R+s},v_{R+s})_\#\rho_0, \quad 0\le s\le T.
\]
By construction, $\mu^{(R)}_s\in\calP(\R^d\times\R^d)$.

As shown in Appendix~\ref{app:kinetic}, the lifted measure $\mu_t$ associated with the Lagrangian dynamics satisfies the kinetic equation
\[
\partial_t\mu_t +\nabla_z\cdot(\xi\mu_t) +\nabla_\xi\cdot\bigl(F_p[\mu_t]\mu_t\bigr)=0 \quad\text{in }\calD'((0,\infty)\times\R^d\times\R^d),
\]
where
\[
F_p[f](z,\xi) = \kappa \iint_{\R^d\times\R^d} \phi(z-z')\,G_p(\xi'-\xi)\,f(\rd z',\rd\xi'),
\quad
G_p(w)=|w|^{p-2}w.
\]

Consequently, the translated measure $\mu^{(R)}$ satisfies the weak formulation
\begin{align}\label{eq:kinetic_weak_R}
\begin{aligned}
&\int_0^T\intrr \lt(-\partial_s\Phi(s,z)\cdot\xi -\nabla_z\Phi(s,z):(\xi\otimes\xi)\rt) \mu^{(R)}_s(\rd z,\rd\xi) \,\rd s \\ 
&\quad = \int_0^T\intrr \Phi(s,z)\cdot F_p[\mu^{(R)}_s](z,\xi) \,\mu^{(R)}_s(\rd z,\rd\xi) \,\rd s
\end{aligned}
\end{align}
for every $\Phi\in C_c^\infty((0,T)\times\R^d;\R^d)$.

In particular, taking test functions depending only on $(s,z)$ yields the spatial continuity equation
\[
\int_0^T\intrr
\lt(-\partial_s\varphi(s,z)-\xi\cdot\nabla_z\varphi(s,z)\rt)  \mu^{(R)}_s(\rd z,\rd\xi)\,\rd s=0
\]
for all $\varphi\in C_c^\infty((0,T)\times\R^d)$.
 
\medskip
\noindent\textbf{Step 2. Local compactness on the support of the test function.}
Let $K\subset\R^d$ be a compact set such that $\supp\Phi\subset (0,T)\times K$. Since we only test the kinetic formulation \eqref{eq:kinetic_weak_R} against $\Phi$, all space--time integrals involve only $(s,z)\in[0,T]\times K$.

By the $L^\infty$ maximum principle established in the proof of Theorem \ref{thm:ext} (see \eqref{eq:v-Linf-decay}), we have the uniform bound
\[
\|v_t\|_{L^\infty(\rho_0)} \le \|u_0\|_{L^\infty(\rho_0)}
\quad\text{for all } t\ge0,
\]
and hence $|v_t(x)|\le M$ for $\rho_0$-a.e.  $x$ with $M:=\|u_0\|_{L^\infty(\rho_0)}$. Consequently, for every $R>0$ and $s\in[0,T]$, the lifted measure
\[
\mu^{(R)}_s=(\eta_{R+s},v_{R+s})_\#\rho_0
\]
is supported in $\R^d\times\overline{B(0,M)}$, namely
\bq\label{eq:compact_support_xi}
\supp\mu^{(R)}_s \subset \R^d\times\overline{B(0,M)} .
\eq
In particular, for each $s\in[0,T]$, the restriction of $\mu^{(R)}_s$ to $K\times\R^d$ is a finite positive Radon measure supported in the compact set $K\times\overline{B(0,M)}$, and hence the family $\{\mu^{(R)}_s\}_{R>0}$ is bounded in $\calM(K\times\overline{B(0,M)})$.

Using \eqref{eq:compact_support_xi} and the fact that $\mu^{(R)}_s$ are probability measures, we may extract a sequence $R_n\to\infty$ and a limit
\[
\mu^\ast \in L^\infty\bigl(0,T;\calM(K\times\overline{B(0,M)})\bigr), 
\]
such that $\mu^{(R_n)}\rightharpoonup\mu^\ast$ weakly-$\ast$ in $L^\infty (0,T;\calM(K\times\overline{B(0,M)}) )$. That is, for every test function
$\psi\in L^1(0,T;C(K\times\overline{B(0,M)}))$, we have
\[
\int_0^T \intrr \psi(s,z,\xi)\,\mu^{(R_n)}_s(\dz,\rd\xi)\,\ds \to \int_0^T \intrr \psi(s,z,\xi)\,\mu^\ast_s(\dz,\rd\xi)\,\ds.
\]

For a.e.  $s\in(0,T)$, let $\rho^\ast_s$ denote the $z$--marginal of $\mu^\ast_s$. By the disintegration theorem (Theorem \ref{thm:disint}), there exists a $\rho^\ast_s$--a.e.  uniquely determined family of probability measures $\{\tilde\nu^\ast_{s,z}\}_{z\in K}\subset\calP(\R^d)$ such that
\[
\mu^\ast_s(\dz,\rd\xi)=\rho^\ast_s(\dz)\,\tilde\nu^\ast_{s,z}(\rd\xi).
\]

\medskip
\noindent\textbf{Step 3. Equicontinuity in time and compactness.} We show that the family $\{\mu^{(R)}\}_{R>0}$ is equicontinuous in time in the dual space $W^{-1,\infty}(K\times\overline{B(0,M)})$.

Let $\psi\in C^1(K\times\overline{B(0,M)})$. From the kinetic formulation \eqref{eq:kinetic_weak_R}, we obtain
\begin{align*}
\frac{\rd}{\rd s}\intrr\psi(z,\xi)\,\mu^{(R)}_s(\dz,\rd\xi) &= \intrr \xi\cdot\nabla_z\psi(z,\xi)\,\mu^{(R)}_s(\dz,\rd\xi)\cr
&\quad + \intrr F_p[\mu^{(R)}_s](z,\xi)\cdot\nabla_\xi\psi(z,\xi)\,\mu^{(R)}_s(\dz,\rd\xi).
\end{align*}
Since $|\xi|\le M$ on the support of $\mu^{(R)}_s$ and the alignment force 
$F_p[\mu^{(R)}_s]$ is uniformly bounded on $K\times\overline{B(0,M)}$, we deduce
\[
\lt|\frac{\rd}{\rd s}\intrr\psi(z,\xi)\,\mu^{(R)}_s(\dz,\rd\xi) \rt|
\le C\|\psi\|_{W^{1,\infty}(K\times\overline{B(0,M)})},
\]
with a constant $C >0$ independent of $R$.  Hence $\{\mu^{(R)}\}_{R>0}$ is equicontinuous in  $W^{-1,\infty}(K\times\overline{B(0,M)})$.

Combining this equicontinuity with the weak-$\ast$ compactness obtained in Step 2,
an Arzel\'a--Ascoli argument yields, up to extraction of a subsequence,
\[
\mu^{(R_n)}_s \rightharpoonup \mu^\ast_s
\quad\text{in }\calM(K\times\overline{B(0,M)})
\quad\text{for every } s\in[0,T].
\]

\medskip
\noindent\textbf{Step 4. Collapse of the velocity marginal.}
For each $R>0$ and $t\in[0,T]$, let
\[
\lambda^{(R)}_t := (\pi_\xi)_\#\mu^{(R)}_t = (v_{R+t})_\#\rho_0
\in \calP(\overline{B(0,M)})
\]
be the velocity marginal of $\mu^{(R)}_t$, where $\pi_\xi(z,\xi):=\xi$ denotes the projection onto the velocity variable.

Since $\pi_\xi$ is continuous and $\mu^{(R_n)}_t \rightharpoonup \mu^\ast_t$ narrowly in
$\calM(K\times\overline{B(0,M)})$ for every $t\in[0,T]$, it follows that
\[
\lambda^{(R_n)}_t \rightharpoonup \lambda^\ast_t := (\pi_\xi)_\#\mu^\ast_t
\quad\text{narrowly in }\calP(\overline{B(0,M)})
\]
for every $t\in[0,T]$.

We now show that $\lambda^\ast_t$ is in fact a Dirac mass concentrated at the conserved mean velocity. Since $\lambda^{(R_n)}_t=(v_{R_n+t})_\#\rho_0$, every two points in $\supp\lambda^{(R_n)}_t$
are of the form $v_{R_n+t}(x)$ and $v_{R_n+t}(y)$ for some $x,y\in\supp\rho_0$.
Hence
\[
\diam(\supp\lambda^{(R_n)}_t) \le \rd_v(R_n+t).
\]
Therefore, for every fixed $t\in[0,T]$,
\[
\diam(\supp\lambda^{(R_n)}_t)\to0 \quad\text{as }n\to\infty.
\]

Next we use conservation of total momentum to identify the center of concentration. By Lemma \ref{lem:apr}, 
the total momentum is conserved along the Lagrangian dynamics:
\[
\intr v_s(x)\,\rho_0(\dx)=\intr u_0(x)\,\rho_0(\dx)=:\bar u \quad\text{for all }s\ge0.
\]
Equivalently,
\[
\intr \xi\,\lambda^{(R_n)}_t(\rd\xi)=\bar u \quad\text{for every }n\in\N,\ t\in[0,T].
\]

We claim that
\[
\lambda^{(R_n)}_t \rightharpoonup \delta_{\bar u} \quad\text{narrowly in }\calP(\overline{B(0,M)})
\]
for every fixed $t\in[0,T]$.
To prove this, let $g\in C(\overline{B(0,M)})$ be arbitrary.
Since $\bar u$ is the barycenter of $\lambda^{(R_n)}_t$, and the support of $\lambda^{(R_n)}_t$
has diameter tending to zero, the whole support must concentrate around $\bar u$.
Indeed, for any $\xi\in\supp\lambda^{(R_n)}_t$,
\[
|\xi-\bar u|
=
\lt|\xi-\intr \xi'\,\lambda^{(R_n)}_t(\rd\xi')\rt|
\le
\intr |\xi-\xi'|\,\lambda^{(R_n)}_t(\rd\xi')
\le
\diam(\supp\lambda^{(R_n)}_t).
\]
Taking the supremum over $\xi\in\supp\lambda^{(R_n)}_t$, we obtain
\[
\supp\lambda^{(R_n)}_t \subset
\overline{B\bigl(\bar u,\diam(\supp\lambda^{(R_n)}_t)\bigr)}.
\]
Hence,
\[
\lt| \intr g(\xi)\,\lambda^{(R_n)}_t(\rd\xi)-g(\bar u) \rt|
\le
\sup_{|\xi-\bar u|\le \diam(\supp\lambda^{(R_n)}_t)}
|g(\xi)-g(\bar u)|.
\]
Since $g$ is uniformly continuous on the compact set $\overline{B(0,M)}$ and
$\diam(\supp\lambda^{(R_n)}_t)\to0$, the right-hand side converges to $0$.
Therefore,
\[
\intr g(\xi)\,\lambda^{(R_n)}_t(\rd\xi)\to g(\bar u),
\]
which proves the claim.

By the uniqueness of the narrow limit, we conclude that
\[
\lambda^\ast_t=\delta_{\bar u} \quad\text{for every }t\in[0,T].
\]

Finally, since $\lambda^\ast_t=(\pi_\xi)_\#\mu^\ast_t$ is the $\xi$-marginal of $\mu^\ast_t$
and is equal to the Dirac mass $\delta_{\bar u}$, the measure $\mu^\ast_t$ must be concentrated on the slice
$\R^d\times\{\bar u\}$. Hence, we have
\[
\mu^\ast_t(\rd z,\rd\xi)=\rho^\ast_t(\rd z)\,\delta_{\bar u}(\rd\xi)
\quad\text{for every }t\in[0,T].
\]

\medskip
\noindent\textbf{Step 5. Passage to the limit and identification of the limit dynamics.}
Let $\varphi\in C_c^\infty((0,T)\times\R^d)$ be arbitrary, and let
$K\subset\R^d$ be a compact set such that
\[
\supp\varphi\subset (0,T)\times K.
\]
Since $\mu^{(R_n)}$ is a weak solution of the kinetic equation, testing against
functions depending only on $(s,z)$ yields
\[
\int_0^T\intrr
\lt(-\partial_s\varphi(s,z)-\xi\cdot\nabla_z\varphi(s,z)\rt) \mu^{(R_n)}_s(\rd z,\rd\xi)\,\rd s=0.
\]
Indeed, the force term disappears because $\varphi$ is independent of $\xi$.

Now the function
\[
(s,z,\xi)\mapsto -\partial_s\varphi(s,z)-\xi\cdot\nabla_z\varphi(s,z)
\]
belongs to $L^1(0,T;C(K\times\overline{B(0,M)}))$.
Hence, by the weak-$\ast$ convergence obtained in Step 2, we may pass to the limit:
\[
\int_0^T\intrr \lt(-\partial_s\varphi(s,z)-\xi\cdot\nabla_z\varphi(s,z)\rt) \mu^\ast_s(\rd z,\rd\xi)\,\rd s=0.
\]
Using the monokinetic form from Step 4,
\[
\mu^\ast_s(\rd z,\rd\xi)=\rho^\ast_s(\rd z)\,\delta_{\bar u}(\rd\xi),
\]
we obtain
\[
\int_0^T\intr \lt(-\partial_s\varphi(s,z)-\bar u\cdot\nabla_z\varphi(s,z)\rt) \rho^\ast_s(\rd z)\,\rd s=0.
\]
Therefore, we have
\[
\partial_t\rho^\ast + \nabla_z \cdot (\rho^\ast \bar u)=0 \quad\text{in }\calD'((0,T)\times\R^d).
\]

Settting
\[
\rho^\ast_0 := (\pi_z)_\#\mu^\ast_0 \in \calP(\R^d),
\]
the above continuity equation with constant velocity $\bar u$ yields
\[
\rho^\ast_t=(z\mapsto z+t\bar u)_\#\rho^\ast_0 \quad\text{for all }t\in[0,T].
\]
As $T>0$ was arbitrary, the same representation holds for all $t\ge0$.
This completes the proof.
\end{proof}

 \begin{remark}[On the relation between $\nu_{t,z}$ and $\tilde\nu_{t,z}$]
Two different families of fibre measures appear in our analysis and play distinct but related roles, corresponding to two different disintegration
procedures.

The family $\{\tilde\nu_{t,z}\}_{z\in\R^d}$ arises from the disintegration of the kinetic lifted measure $(\eta_t,v_t)_\#\rho_0$ with respect to its spatial marginal $\rho_t=(\eta_t)_\#\rho_0$, namely
\[
(\eta_t,v_t)_\#\rho_0(\dz,\rd\xi)=\rho_t(\dz)\,\tilde\nu_{t,z}(\rd\xi).
\]
The measure $\tilde\nu_{t,z}\in\calP(\R^d)$ represents the conditional distribution of velocities at the Eulerian position $z$.

On the other hand, the family $\{\nu_{t,z}\}_{z\in\R^d}$ is obtained from the disintegration of the reference measure $\rho_0$ with respect to the flow map $\eta_t$, that is,
\[
\rho_0(\dx)=\intr\nu_{t,z}(\dx) \rho_t(\dz),
\]
and describes the distribution of Lagrangian labels $x$ such that $\eta_t(x)=z$. These two families are related through the velocity map by the pushforward identity
\[
\tilde\nu_{t,z} = (v_t)_\# \nu_{t,z}.
\]
In particular, if the Lagrangian flow map $\eta_t$ is injective, then for $\rho_t$-a.e.  $z$ the fibre $\nu_{t,z}$ reduces to the Dirac mass $\delta_{\eta_t^{-1}(z)}$, and consequently
\[
\tilde\nu_{t,z}=\delta_{v_t(\eta_t^{-1}(z))}=\delta_{u_t(z)}.
\]
In this case the kinetic measure is mono-kinetic and the Reynolds stress vanishes. Thus injectivity of $\eta_t$ is a sufficient condition for mono-kinetic closure.

The converse implication, however, does not hold in general. It may happen that $\tilde\nu_{t,z}$ is a Dirac mass even though $\nu_{t,z}$ is not, corresponding to the situation where several Lagrangian labels collide in space while carrying identical velocities. This distinction explains why mono-kinetic closure at the Eulerian level does not necessarily imply injectivity of the underlying Lagrangian flow.
\end{remark}


\section{One-dimensional Lagrange--alignment formulation}\label{sec:1DEA}
In this section, we specialize the Lagrangian formulation of the alignment dynamics to the one-dimensional case. We consider the system
\begin{align}\label{Lag_1}
\begin{aligned}
\pa_t \eta_t(x) &= v_t(x), \quad t >0, \ x \in \R, \\
\pa_t v_t(x) &= \kappa \int_\R \phi (\eta_t(x)-\eta_t(y))(v_t(y)-v_t(x))\,\rho_0(\dy).
\end{aligned}
\end{align}

Following \cite{CCKTpre, CZ21, HKPZ19}, it is natural to write the above system in the equivalent ``renormalized'' form. Recall the effective initial velocity
\[
\widehat v(x) = u_0(x)-\kappa \int_\R \Phi(y-x)\,\rho_0(\dy),
\]
where $\Phi\in C^2(\R)$ is a primitive of $\phi$. Then \eqref{Lag_1} rewrites as
\bq\label{eq:rLag}
\pa_t \eta_t(x) = \widehat v(x) + \kappa \int_\R \Phi (\eta_t(y)-\eta_t(x))\,\rho_0(\dy).
\eq
By Theorem \ref{thm:ext} the system \eqref{Lag_1}--\eqref{eq:rLag} admits a unique global Lagrangian solution $(\eta_t,v_t)$.


\subsection{Order preservation and injectivity}

We now characterize the exact condition under which the Lagrangian flow remains injective for all $t\ge 0$. Recall from Section \ref{sec:ERA} that injectivity of $\eta_t$ implies that the Reynolds stress tensor $\tau_t$ vanishes, so that the ERA system reduces to the mono-kinetic Euler--alignment equations.  

\begin{proposition}[Characterization of injectivity]\label{prop:inj_reg}
Let $(\eta_t,v_t)$ denote the global Lagrangian solution, whose flow component $\eta_t$ satisfies \eqref{eq:rLag}. Then the following are equivalent:
\begin{enumerate}
\item[(i)] $\eta_t$ is injective for all $t\ge 0$; equivalently, the flow is 
order preserving:
\[
x>y \quad\Longrightarrow\quad \eta_t(x)>\eta_t(y)\quad\forall\,t\ge 0.
\]

\item[(ii)] The effective velocity $\widehat v$ is non-decreasing:
\[
x>y \quad\Longrightarrow\quad \widehat v(x)\ge \widehat v(y).
\]
\end{enumerate}
\end{proposition}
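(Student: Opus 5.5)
The plan is to work entirely with the renormalized first-order equation \eqref{eq:rLag}, which is a nonlocal ODE for $\eta_t$ alone with a monotone interaction. Since $\Phi$ is a primitive of $\phi\ge0$, $\Phi$ is non-decreasing; this monotonicity drives both directions. For two labels $x,y\in\supp\rho_0$ set $\delta_t:=\eta_t(x)-\eta_t(y)$. Then
\[
\frac{\rd}{\dt}\delta_t=\widehat v(x)-\widehat v(y)+\kappa\int_\R\Big(\Phi(\eta_t(z)-\eta_t(x))-\Phi(\eta_t(z)-\eta_t(y))\Big)\rho_0(\rd z),
\]
and by the mean value theorem the integral equals $-\kappa\,\delta_t\,c_t(x,y)$ with $c_t(x,y):=\int_\R\int_0^1\phi\big(\eta_t(z)-\eta_t(y)-s\delta_t\big)\,\rd s\,\rho_0(\rd z)$. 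Thus $0\le c_t\le\|\phi\|_{L^\infty}$, and $\delta$ solves the linear scalar ODE $\dot\delta_t=-\kappa c_t\delta_t+(\widehat v(x)-\widehat v(y))$, with $c$ bounded and continuous in $t$ by the $C^1$-in-$L^\infty$ regularity from Theorem \ref{thm:ext}. All statements are for $\rho_0$-a.e.\ pair $(x,y)$, which is harmless since the flow is only defined $\rho_0$-a.e.

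\textbf{(ii)$\Rightarrow$(i).} Take $x>y$, so $\delta_0=x-y>0$ and $\widehat v(x)-\widehat v(y)\ge0$. Duhamel's formula gives
\[
\delta_t=e^{-\kappa\int_0^tc}(x-y)+\int_0^t e^{-\kappa\int_s^tc}\,(\widehat v(x)-\widehat v(y))\,\ds\ \ge\ e^{-\kappa\|\phi\|_\infty t}(x-y)>0,
\]
so $\eta_t(x)>\eta_t(y)$ for all $t$, i.e.\ the flow is strictly order preserving, hence injective. The equivalence, within (i), between injectivity and order preservation follows from continuity in $t$: if $\delta_0>0$ and $\delta_t\neq0$ for all $t$, then $\delta_t>0$ for all $t$ by the intermediate value theorem, and conversely.

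\textbf{(i)$\Rightarrow$(ii), by contraposition.} Suppose there are $x>y$ with $\widehat v(x)<\widehat v(y)$; set $a:=\widehat v(y)-\widehat v(x)>0$. As long as $\delta_t>0$, the equation gives $\dot\delta_t\le -a$, since $-\kappa c_t\delta_t\le0$. Hence $\delta_t\le (x-y)-at$, which forces $\delta_{t^*}=0$ at some $t^*\le (x-y)/a$, so $\eta_{t^*}(x)=\eta_{t^*}(y)$ and injectivity fails. This argument also yields the finite-time collision claimed in the supercritical part of Theorem \ref{thm:1DEA}.

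The main obstacle is the measure-theoretic meaning of ``$x>y\Rightarrow\widehat v(x)\ge\widehat v(y)$'' when $u_0\in L^\infty(\rho_0)$ is only defined $\rho_0$-a.e. I would interpret monotonicity of $\widehat v$ and injectivity of $\eta_t$ modulo $\rho_0$-null sets: there is a full-measure set $\Omega\subset\supp\rho_0$ on which $\widehat v$ is monotone and all trajectories are defined. In the converse direction a single violating pair is not enough, since it may lie in a null set. Failure of a.e.\ monotonicity instead provides sets $A,B$ of positive $\rho_0$-measure, with $A$ lying to the right of $B$, such that $\widehat v|_A<\widehat v|_B-a$. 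Applying the collision bound uniformly to pairs in $A\times B$, with the same explicit collision-time bound, then shows that injectivity fails on a set of positive measure. One also has to check that $\rho_0$ has no atoms being relabelled, but atoms are single labels and do not affect the argument.
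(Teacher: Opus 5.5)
Your two implications follow the paper's proof exactly: the same difference equation for $\eta_t(x)-\eta_t(y)$ from \eqref{eq:rLag}, the same lower bound $(x-y)e^{-\kappa\|\phi\|_{L^\infty}t}$ for (ii)$\Rightarrow$(i), and the same bound $\eta_t(x)-\eta_t(y)\le (x-y)-(\widehat v(y)-\widehat v(x))t$ for the contrapositive. Your averaged coefficient $c_t$ is a slightly cleaner, automatically measurable, version of the paper's intermediate point $\xi_t(x,y,z)$. The paper states and proves the proposition pointwise: one scalar trajectory per label, and a single violating pair contradicts injectivity. Read that way, your proof is complete.

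The only problem is your last paragraph, where you assert the result with injectivity understood modulo $\rho_0$-null sets. In that reading, injectivity is a property of $\eta_t$ at each fixed time; equivalently, the disintegration $\{\nu_{t,z}\}$ is Dirac. Your collision bound only says that each pair $(x,y)\in A\times B$ meets at some time $\tau(x,y)\le T$, and that time depends on the pair. By Remark \ref{rmk:one_c}, the sets $C_t:=\{(x,y)\in A\times B:\ \eta_t(x)=\eta_t(y)\}$ are pairwise disjoint in $t$. Hence $(\rho_0\otimes\rho_0)(C_t)>0$ for at most countably many $t$, and possibly for none.

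To conclude that $\eta_t$ is injective on no full-measure set at some fixed $t$, you would need something like $(\eta_t)_\#(\rho_0|_A)$ and $(\eta_t)_\#(\rho_0|_B)$ failing to be mutually singular. The pairwise bound does not give this. For general $L^\infty$-continuous flows, one family of labels can sweep through another while the two image measures stay mutually singular at every time (e.g.\ one carried by a Cantor-type set). Then at each time the labels of the other family that are hit form a null set and can be discarded.

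Ruling this out for \eqref{eq:rLag} needs an additional argument using the structure of the dynamics: each label solves $\dot z=\widehat v(x)+W_t(z)$ with $W_t(z):=\kappa\int_\R\Phi(\eta_t(y)-z)\,\rho_0(\dy)$ Lipschitz in $z$. Even producing $A,B$ from the failure of a.e.\ monotonicity needs a short Fubini argument over rational thresholds, and it yields some gap $a'>0$ rather than your $a$. Either supply these steps, or drop that paragraph and keep the pointwise reading, as the paper does.
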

\begin{proof}
\emph{(ii)\,$\Rightarrow$\,(i).}  
Fix $x>y$. From \eqref{eq:rLag}, we find
\begin{align*}
\pa_t (\eta_t(x) - \eta_t(y)) &= \widehat v(x) - \widehat v(y) + \kappa \int_\R \lt(\Phi(\eta_t(z)-\eta_t(x)) - \Phi(\eta_t(z)-\eta_t(y))\rt)\rho_0(\dz)\cr
&\geq -\kappa(\eta_t(x) - \eta_t(y)) \int_\R \phi(\xi_t(x,y,z))\,\rho_0(\dz),
\end{align*}
where $\xi_t(x,y,z)$ is between $\eta_t(z)-\eta_t(x)$ and $\eta_t(z)-\eta_t(y)$. Then, applying Gr\"onwall's lemma gives
\[
\eta_t(x) - \eta_t(y) \geq (x-y) \exp\lt(  -\kappa \int_0^t  \int_\R \phi(\xi_s(x,y,z))\,\rho_0(\dz) \ds\rt) \geq (x-y) e^{-\kappa \|\phi\|_{L^\infty} t}.
\]
Thus order is preserved.

\medskip
\emph{(i)\,$\Rightarrow$\,(ii).}
If $\widehat v$ is not non-decreasing, pick $x>y$ with 
$\widehat v(x)<\widehat v(y)$.  
For as long as $\eta_t(x) - \eta_t(y) > 0$, we obtain
\begin{align*}
\pa_t (\eta_t(x) - \eta_t(y)) &= \widehat v(x) - \widehat v(y) + \kappa \int_\R \lt(\Phi(\eta_t(z)-\eta_t(x)) - \Phi(\eta_t(z)-\eta_t(y))\rt)\rho_0(\dz)\cr
&\leq  \widehat v(x) - \widehat v(y),
\end{align*}
since $\Phi$ is nondecreasing.
This gives
\[
\eta_t(x) - \eta_t(y) \leq (x-y) + \lt( \widehat v(x) - \widehat v(y)\rt) t,
\]
and hence $\eta_t(x) - \eta_t(y)$ must reach $0$ in finite time, contradicting injectivity. This completes the proof.
\end{proof}
 
\begin{remark}[At most one collision per pair]\label{rmk:one_c}
Assume that $\widehat v$ is not non-decreasing, and fix $x>y$ with $\widehat v(x)<\widehat v(y)$.  Set
\[
d(t):=\eta_t(x)-\eta_t(y),\quad \Delta\widehat v:=\widehat v(x)-\widehat v(y)<0.
\]
Subtracting \eqref{eq:rLag} at $x$ and $y$ and using the mean value theorem (with $\Phi'=\phi$) yields, for all $t\ge0$,
\[
d'(t)
=\Delta\widehat v -\kappa \lt(\int_\R\phi(\xi_t(x,y,z))\,\rho_0(\dz)\rt) d(t) = \Delta\widehat v-\kappa A(t)\,d(t),
\]
where $\xi_t(x,y,z)$ lies between $\eta_t(z)-\eta_t(x)$ and $\eta_t(z)-\eta_t(y)$, and
\[
A(t):=\int_\R\phi (\xi_t(x,y,z) )\,\rho_0(\dz)\in[0,\|\phi\|_{L^\infty}].
\]
Thus $d$ solves the linear ODE
\[
d'(t)+\kappa A(t)\,d(t)=\Delta\widehat v.
\]
By the integrating factor formula, we get
\[
d(t)=e^{-\kappa\int_0^t A(\tau)\,d\tau}\lt( d(0)+\Delta\widehat v\int_0^t e^{\kappa\int_0^s A(\tau)\,d\tau}\,\ds\rt).
\]
Since $\Delta\widehat v<0$ and the integrand is strictly positive, the bracketed term is strictly decreasing in $t$. Consequently, it can vanish at most once, and therefore
\[
C_{x,y}:=\{t>0:\eta_t(x)=\eta_t(y)\}=\{t>0:d(t)=0\}
\]
contains at most one point. In particular, a second collision cannot occur.

Moreover, as long as $d(t)>0$ one has $d'(t)=\Delta\widehat v-\kappa A(t)d(t)\le \Delta\widehat v$, and hence the first collision time $t_c$ (if it exists) satisfies the explicit upper bound
\[
t_c \le \frac{d(0)}{-\Delta\widehat v}=\frac{x-y}{\widehat v(y)-\widehat v(x)}.
\]
\end{remark}


 \subsection{Reduction to Euler--alignment in the injective regime}
 
The one-dimensional structure allows a complete characterization of when the Lagrangian flow remains injective, using Proposition \ref{prop:inj_reg}. When the effective velocity $\widehat v$ is non-decreasing, the flow remains injective for all times, and the induced Eulerian state remains mono-kinetic, so that the Reynolds stress vanishes identically and the ERA system reduces to the Euler--alignment equations.

When the monotonicity of $\widehat v$ fails, collisions may occur, and the flow may lose injectivity. Remark \ref{rmk:one_c} shows that each ordered pair of Lagrangian labels can collide at most once. However, this property alone does not imply that the disintegration of $\rho_0$ along $\eta_t$ remains Dirac for almost every time. In general, several Lagrangian labels may map to the same Eulerian position while carrying different velocities, and the Eulerian description is then given by the ERA system with a nontrivial Reynolds stress.
 
\begin{proof}[Proof of Theorem \ref{thm:1DEA}]
If the effective velocity $\widehat v$ is non-decreasing, then by Proposition \ref{prop:inj_reg}, the flow map $\eta_t$ is injective for all $t\ge0$, the solution remains mono-kinetic, and the
Reynolds stress vanishes identically. Hence, by Theorem \ref{thm:ERA}, the Eulerian pair $(\rho_t,u_t)$ satisfies the mono-kinetic one-dimensional Euler--alignment system \eqref{1D_EA} in the sense of distributions.
\end{proof}

%
%
%
%
%

\section{Euler--alignment system}\label{sec:EA}

This section is devoted to the construction of weak solutions to the Euler--alignment system
\begin{align}\label{eq:EA}
\begin{aligned}
\pa_t\rho_t+\nabla \cdot(\rho_t u_t)&=0, \\
\pa_t(\rho_t u_t)+\nabla \cdot(\rho_t u_t\otimes u_t)
&=\rho_t \calA_p[\rho_t,u_t]
\end{aligned}
\end{align}
starting from the Lagrangian $p$-alignment dynamics \eqref{Lag_p}.

Given a Lagrangian solution $(\eta_t,v_t)$, the pushforward and disintegration procedure associates Eulerian variables $(\rho_t,u_t,\tau_t)$ solving the ERA system \eqref{eq:ERA}, where the Reynolds stress $\tau_t$ measures the fibrewise velocity dispersion generated by the disintegration of $\rho_0$ along the flow map $\eta_t$. In particular, injectivity of $\eta_t$ is a sufficient condition for $\tau_t$ to vanish, but the essential closure mechanism is the collapse of the disintegration to Dirac masses. The vanishing of $\tau_t$ is therefore the key mechanism allowing the reduction of \eqref{eq:ERA} to the closed Eulerian system \eqref{eq:EA}.

Our approach relies on quantitative control of the Lagrangian deformation. Since
\[
\nabla \eta_t(x) =  I_d + \int_0^t \nabla v_s(x)\,\ds,
\]
a sufficient condition for injectivity of the flow is that the deformation of the identity remains strictly smaller than unity in operator norm, i.e.,
\[
\lt\|\int_0^t \nabla v_s(\cdot)\,\ds \rt\|_{L^\infty(\rho_0)} < 1,
\]
which implies that the flow map $\eta_t$ is injective on this time interval. Consequently, the disintegration along $\eta_t$ is Dirac, so that both the Reynolds stress $\tau_t$ and the nonlinear defect force $\calR_p$ vanish identically, and the ERA system reduces to \eqref{eq:EA}.

We proceed as follows.  We first establish global well-posedness of the Lagrangian $p$-alignment system in $W^{1,\infty}(\rho_0)$ and derive a priori bounds on $\|\nabla \eta_t\|_{L^\infty(\rho_0)}$ and $\|\nabla v_t\|_{L^\infty(\rho_0)}$ (Theorem \ref{thm:Wp1} below).  These bounds yield local-in-time weak solutions to \eqref{eq:EA}.  We then specialize to the linear velocity coupling $p=2$ and show that, under a sufficiently large coupling strength, the injectivity condition holds globally in time, thereby proving Theorem \ref{thm:EA_global}.

%
%
%
%
%
\subsection{Global well-posedness for Lagrangian $p$-alignment formulation  in $W^{1,\infty}(\rho_0)$}  
We now prove the global well-posedness result for the Lagrangian $p$-alignment system mentioned above.

\begin{theorem} \label{thm:Wp1}
Let $p\ge2$ and assume $\phi \in C^1 \cap W^{1,\infty}(\R^d)$. For any $u_0\in W^{1,\infty}(\rho_0)$, the Lagrangian $p$-alignment system \eqref{Lag_p}--\eqref{ini:Lag_p} admits a unique global solution
\[
(\eta - {\rm id},v)\in C^1([0,\infty);W^{1,\infty}(\rho_0)\times W^{1,\infty}(\rho_0)).
\]
Moreover, we have
\[
\|\nabla \eta_t \|_{L^\infty(\rho_0)} +  \|\nabla v_t\|_{L^\infty(\rho_0)} \leq (1 + \|\nabla u_0\|_{L^\infty(\rho_0)}) e^{C_0 t}
\]
for some $C_0 > 0$ depending on $\kappa, p, \|\phi\|_{W^{1,\infty}}$, and $\|u_0\|_{L^\infty}$.
\end{theorem}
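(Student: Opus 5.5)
The plan is to repeat the fixed-point/continuation argument from the proof of Theorem \ref{thm:ext}, now in the phase space $Y:=W^{1,\infty}(\rho_0)\times W^{1,\infty}(\rho_0)$ for the displacement formulation \eqref{eq:w-formulation}. Here $W^{1,\infty}(\rho_0)$ is understood as the space of (Lipschitz on $\supp\rho_0$) fields with norm $\|f\|_{L^\infty(\rho_0)}+\|\nabla f\|_{L^\infty(\rho_0)}$.

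\emph{Local Lipschitz bound for the vector field.} I will show that $(w,v)\mapsto(v,\calA_p[w,v])$ maps $Y$ into $Y$ and is locally Lipschitz there. The $L^\infty$ part was already done in the proof of Theorem \ref{thm:ext}. For the gradient, differentiate under the integral:
\[
\nabla_x\calA_p[w,v](x)=\kappa\intr \nabla\phi(\eta(x)-\eta(y))\,G_p(v(y)-v(x))\otimes\nabla\eta(x)\,\rho_0(\dy)-\kappa\intr\phi(\eta(x)-\eta(y))\,\nabla G_p(v(y)-v(x))\nabla v(x)\,\rho_0(\dy),
\]
with $\eta=\mathrm{id}+w$. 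This is bounded by $C(\|\nabla\phi\|_{L^\infty}\|v\|_{L^\infty}^{p-1}\|\nabla\eta\|_{L^\infty}+\|\phi\|_{L^\infty}\|v\|_{L^\infty}^{p-2}\|\nabla v\|_{L^\infty})$.

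Local Lipschitz continuity of this expression in $(w,v)$ uses that $\nabla G_p$ is locally Lipschitz for $p\ge3$ but only $(p-2)$-Hölder for $2<p<3$. It also uses $\nabla\phi$, which is merely continuous and bounded. This is the main obstacle: $\phi\in C^1\cap W^{1,\infty}$ does not make $\nabla\phi$ Lipschitz, so Picard in $Y$ may fail directly. I would bypass it as follows. First take the unique $L^\infty$ solution from Theorem \ref{thm:ext}. Then observe that, with $(\eta_t,v_t)$ fixed, the pair $(\nabla\eta,\nabla v)$ formally solves the \emph{linear} system
\[
\pa_t\nabla\eta=\nabla v,\qquad \pa_t\nabla v=\kappa\intr\nabla\phi(\eta_t(x)-\eta_t(y))G_p(v_t(y)-v_t(x))\rho_0(\dy)\otimes\nabla\eta-\kappa\Big(\intr\phi(\cdot)\nabla G_p(v_t(y)-v_t(x))\rho_0(\dy)\Big)\nabla v,
\]
whose coefficients are bounded and continuous in time in $L^\infty(\rho_0)$. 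This linear system is globally solvable in $L^\infty(\rho_0)$. To identify its solution with the true gradients, I would argue on difference quotients $(\eta_t(x)-\eta_t(x'))/|x-x'|$ and $(v_t(x)-v_t(x'))/|x-x'|$. These satisfy the same system with coefficients given by mean values, so a Grönwall bound gives Lipschitz bounds on $\eta_t$ and $v_t$ directly. Continuity of $\nabla\phi$ and $\nabla G_p$ then passes to the limit $x'\to x$ and identifies the gradients. Uniqueness is inherited from Theorem \ref{thm:ext}, and the time regularity in $C^1([0,\infty);W^{1,\infty})$ follows from the equation.

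\emph{A priori bound.} Set $M:=\|u_0\|_{L^\infty(\rho_0)}$ and use $\|v_t\|_{L^\infty}\le M$ from \eqref{eq:v-Linf-decay}. Put $E(t):=\|\nabla\eta_t\|_{L^\infty(\rho_0)}+\|\nabla v_t\|_{L^\infty(\rho_0)}$ (or the Lipschitz constants on $\supp\rho_0$). The system above yields
\[
D^+E(t)\le \|\nabla v_t\|_{L^\infty}+\kappa\|\nabla\phi\|_{L^\infty}(2M)^{p-1}\|\nabla\eta_t\|_{L^\infty}+\kappa C_p\|\phi\|_{L^\infty}(2M)^{p-2}\|\nabla v_t\|_{L^\infty}\le C_0E(t).
\]
Since $E(0)=1+\|\nabla u_0\|_{L^\infty(\rho_0)}$ because $\nabla\eta_0=I_d$, Grönwall gives $E(t)\le(1+\|\nabla u_0\|_{L^\infty(\rho_0)})e^{C_0t}$, with $C_0$ depending only on $\kappa$, $p$, $\|\phi\|_{W^{1,\infty}}$ and $M$. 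This bound rules out blow-up, so the solution is global.
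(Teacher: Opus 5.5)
Your proposal is correct, and its a priori part coincides with the paper's proof. The paper differentiates \eqref{Lag_p} in $x$, bounds the two resulting terms by $\kappa\|\nabla\phi\|_{L^\infty}(2M)^{p-1}\|\nabla\eta_t\|_{L^\infty(\rho_0)}$ and $\kappa C_p\|\phi\|_{L^\infty}(2M)^{p-2}\|\nabla v_t\|_{L^\infty(\rho_0)}$, and closes with Gr\"onwall, exactly as you do.

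The genuine difference is the existence step. The paper asserts that local well-posedness in $W^{1,\infty}(\rho_0)$ ``follows by the same argument as in Theorem~\ref{thm:ext}'', i.e.\ Picard--Lindel\"of, and then uses the a priori bound for continuation. You are right that this is not literally available under the stated hypotheses. The $x$-gradient of the vector field contains $\nabla\phi(\eta(x)-\eta(y))$ and $\nabla G_p(v(y)-v(x))\nabla v(x)$. These are not locally Lipschitz in $(w,v)$ for the $W^{1,\infty}$ norm when $\nabla\phi$ is only continuous, nor when $2<p<3$. The paper's route would really need something like $\nabla\phi$ Lipschitz and $p\in\{2\}\cup[3,\infty)$.

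Your route works under the stated assumptions. You take the unique global $L^\infty$ solution from Theorem~\ref{thm:ext} and run Gr\"onwall on the difference quotients. These obey the same system with mean-value coefficients, so they only need $\phi$ Lipschitz and $G_p$ locally Lipschitz. This gives uniqueness and global existence for free, and yields the Lipschitz bound without differentiating anything. The linearized system and the continuity of $\nabla\phi,\nabla G_p$ enter only when you identify the gradients.

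The Lipschitz-seminorm version is also the form the injectivity criterion of Section~\ref{sec:EA} actually requires, via $|\eta_t(x)-\eta_t(x')|\ge\bigl(1-\int_0^t{\rm Lip}(v_s)\,\ds\bigr)|x-x'|$ on $\supp\rho_0$.

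One caveat, shared with the paper, concerns two claims:
\begin{itemize}
\item upgrading ``Lipschitz in time with values in $W^{1,\infty}(\rho_0)$'' to $C^1([0,\infty);W^{1,\infty}(\rho_0))$;
\item your statement that the coefficients of the linear system are continuous in time in $L^\infty(\rho_0)$.
\end{itemize}
Both use uniform continuity of $\nabla\phi$ on the range of $\eta_t(x)-\eta_t(y)$. This is automatic when $\supp\rho_0$ is bounded, but it is not implied by $\phi\in C^1\cap W^{1,\infty}$ in general.
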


\begin{proof}
Local well-posedness in $W^{1,\infty}(\rho_0)$ follows by the same argument as in Theorem \ref{thm:ext}, thus it suffices to derive a priori estimates in  $\dot W^{1,\infty}(\rho_0)$.

Let $G_p(\xi):=|\xi|^{p-2}\xi$. For all $p\ge2$, one has
\[
|G_p(\xi)|\le |\xi|^{p-1},\quad |\nabla G_p(\xi)|\le C_p |\xi|^{p-2}\quad(\text{with }C_p\sim p-1).
\]
By Theorem \ref{thm:ext} (maximum principle), set 
\[
M:=\sup_{t\ge0}\|v_t\|_{L^\infty(\rho_0)}\le \|u_0\|_{L^\infty(\rho_0)}. 
\]
Thus, for all $x,y \in \supp \rho_0$, we have
\[
 |v_t(y)-v_t(x)|\le 2M, \quad \|(\nabla G_p)(v_t(y)-v_t(x))\|\le C_p (2M)^{p-2}.
\]

Differentiate the system \eqref{Lag_p} with respect to $x$, we find
\begin{align}\label{LA_grad}
\begin{aligned}
\pa_t \nabla \eta_t &= \nabla v_t, \cr
\pa_t \nabla v_t &= \kappa \intr \nabla \eta_t(x) \nabla\phi(\eta_t(x) - \eta_t(y)) G_p(v_t(y) - v_t(x))\,\rho_0(\dy) \cr
&\quad - \kappa \intr \phi(\eta_t(x) - \eta_t(y)) (\nabla G_p)(v_t(y) - v_t(x)) \nabla v_t(x)\,\rho_0(\dy)\cr
&=: I + II.
\end{aligned}
\end{align}
We then estimate $I$ and $II$ as
\begin{align*}
| I | &\leq \kappa \|\nabla \eta_t\|_{L^\infty(\rho_0)}\|\nabla \phi\|_{L^\infty} (2M)^{p-1},\cr
| II | &\leq \kappa \|\nabla v_t\|_{L^\infty(\rho_0)} \|\phi\|_{L^\infty} C_p (2M)^{p-2}.
\end{align*}
This yields
\[
\|\nabla \eta_t \|_{L^\infty(\rho_0)} +  \|\nabla v_t\|_{L^\infty(\rho_0)} \leq 1 + \|\nabla u_0\|_{L^\infty(\rho_0)} + C_0\int_0^t (\|\nabla \eta_s \|_{L^\infty(\rho_0)} +  \|\nabla v_s\|_{L^\infty(\rho_0)})\,\ds,
\]
and applying Gr\"onwall's lemma further gives
\[
\|\nabla \eta_t \|_{L^\infty(\rho_0)} +  \|\nabla v_t\|_{L^\infty(\rho_0)} \leq (1 + \|\nabla u_0\|_{L^\infty(\rho_0)}) e^{C_0 t}
\]
for some $C_0 > 0$ depending on $\kappa, p, \|\phi\|_{W^{1,\infty}}, M$. This completes the proof.
\end{proof}

%
%
%
%
%
 
\subsection{Local-in-time existence of Eulerian $p$-alignment formulation}  

As a direct consequence of the gradient bounds obtained in Theorem \ref{thm:Wp1}, there exists a time $T_*>0$ such that
\[
\lt\|\int_0^t \nabla v_s\,\ds \rt\|_{L^\infty(\rho_0)} < 1 \quad \forall\, t \in [0,T_*).
\]
On this time interval, the Lagrangian flow $\eta_t$ remains injective and the ERA system reduces to the Eulerian $p$-alignment equations \eqref{eq:EA}.  This yields the following local-in-time existence result.

\begin{theorem} \label{thm:EA_local}
Assume $p\geq 2$ and the hypotheses of Theorem \ref{thm:Wp1}. Let $(\rho_t,u_t)$ be the Eulerian pair associated with the Lagrangian flow $(\eta_t,v_t)$ through the Lagrangian--Eulerian correspondence described in Section \ref{ssec:LEc}. Then  $(\rho, u)$ is a local-in-time solution to the Eulerian $p$-alignment system \eqref{eq:EA} in the sense of distributions on $(0,T_*)\times\R^d$ for some $T_* > 0$, with initial data 
\[
(\rho_t, u_t)|_{t=0} = (\rho_0, u_0).
\]
Moreover, we have
\[
\rho \in C([0,T_*);\calP(\R^d)),\quad   u \in L^\infty(0,T_*; L^\infty(\rho)).
\]
In the class of Eulerian pairs arising from the Lagrangian flow of Theorem \ref{thm:Wp1}, this solution is unique.
\end{theorem}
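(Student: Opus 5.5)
The plan is to reduce Theorem \ref{thm:EA_local} to Theorem \ref{thm:ERA} by proving that, on a short time interval, the disintegration of $\rho_0$ along $\eta_t$ is Dirac. Then both the Reynolds stress $\tau_t$ and the defect force $\calR_p$ vanish.

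\emph{Step 1: a time $T_*$ on which the deformation stays small.} By Theorem \ref{thm:Wp1}, $\|\nabla v_t\|_{L^\infty(\rho_0)}\le (1+\|\nabla u_0\|_{L^\infty(\rho_0)})e^{C_0t}$. Hence
\[
\Big\|\int_0^t\nabla v_s\,\ds\Big\|_{L^\infty(\rho_0)}\le \frac{1+\|\nabla u_0\|_{L^\infty(\rho_0)}}{C_0}\big(e^{C_0t}-1\big),
\]
and we define $T_*>0$ as the time at which the right-hand side equals $1$.

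\emph{Step 2: injectivity of $\eta_t$ for $t<T_*$.} Write $\eta_t(x)-\eta_t(y)=(x-y)+(w_t(x)-w_t(y))$ with $w_t=\int_0^t v_s\,\ds$. We need
\[
|w_t(x)-w_t(y)|\le \Big\|\int_0^t\nabla v_s\,\ds\Big\|_{L^\infty(\rho_0)}|x-y|<|x-y|,
\]
so that $\eta_t(x)\ne\eta_t(y)$ whenever $x\ne y$.

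This is the main obstacle. The space $W^{1,\infty}(\rho_0)$ only gives a gradient bound $\rho_0$-a.e. on $\supp\rho_0$, which need not be convex, so the mean-value inequality is not automatic. I would interpret $W^{1,\infty}(\rho_0)$ as the space of Lipschitz functions on $\supp\rho_0$, with the gradient norm controlling the Lipschitz constant. Concretely, I would rerun the Gronwall argument of Theorem \ref{thm:Wp1} directly on the difference quotients
\[
\frac{|v_t(x)-v_t(y)|}{|x-y|},\qquad \frac{|\eta_t(x)-\eta_t(y)|}{|x-y|},
\]
using the Lipschitz bound on $\phi$ and the local Lipschitz bound on $G_p$ exactly as in \eqref{LA_grad}. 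This yields $\mathrm{Lip}(w_t)\le \int_0^t \mathrm{Lip}(v_s)\,\ds<1$ with the same constants, so injectivity holds for all labels in $\supp\rho_0$, not merely almost everywhere. A Lipschitz injective map with Lipschitz inverse on its image is bi-measurable.

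\emph{Step 3: closure and conclusion.} With $\eta_t$ injective, the fibres are singletons, so $\nu_{t,z}=\delta_{\eta_t^{-1}(z)}$ for $\rho_t$-a.e. $z$, as in the remark following Lemma \ref{lem:ERA-prep}. Consequently $\theta_t=0$, hence $\tau_t=0$, and $\omega_t=0$ on each fibre, so $\calK_t\equiv0$ by \eqref{calK}, giving $\calR_p=0$. Applying Theorem \ref{thm:ERA} with test functions supported in $(0,T_*)\times\R^d$, the ERA system reduces to \eqref{eq:EA} in the sense of distributions.

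The remaining properties follow from earlier results. Lemma \ref{lem:cont} gives $\rho\in C([0,T_*);\calP(\R^d))$. Lemma \ref{lem:ERA-prep} together with \eqref{eq:v-Linf-decay} gives $\|u_t\|_{L^\infty(\rho_t)}\le\|u_0\|_{L^\infty(\rho_0)}$. Since $\eta_0={\rm id}$, the initial data are $(\rho_0,u_0)$. Uniqueness within the Lagrangian-induced class follows from uniqueness of the Lagrangian solution in Theorem \ref{thm:Wp1} (or Theorem \ref{thm:ext}), exactly as in Step 4 of the proof of Theorem \ref{thm:ERA}.
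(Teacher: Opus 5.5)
Your proposal is correct and follows the paper's route. The exponential gradient bound of Theorem \ref{thm:Wp1} gives a short time $T_*$ on which $\int_0^t\nabla v_s\,\ds$ stays below one; injectivity of $\eta_t$ then makes the disintegration Dirac, so $\tau_t$ and $\calR_p$ vanish and Theorem \ref{thm:ERA} reduces to \eqref{eq:EA}, with regularity and uniqueness inherited as you describe. Your difference-quotient treatment of Step 2 is a real sharpening, not a detour: the paper goes from the $L^\infty(\rho_0)$ bound on $\int_0^t\nabla v_s\,\ds$ to injectivity without comment, and when $\supp\rho_0$ is not convex that implication genuinely needs your reading of $W^{1,\infty}(\rho_0)$ as Lipschitz on $\supp\rho_0$. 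For instance, take $\rho_0$ uniform on a slit disc and $u_0$ smooth with bounded gradient but opposite normal velocities on the two sides of the slit; the gradient bound holds, yet the flow is non-injective with nonzero fibre variance at every small $t>0$.
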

%
%
%
%
%
 \subsection{Global-in-time existence of Euler--alignment system}  
We now turn to the proof of global-in-time existence for the Euler--alignment system in the case of linear velocity coupling $p=2$. As discussed above, global solvability at the Eulerian level reduces to establishing global injectivity of the Lagrangian flow $\eta_t$. Our strategy is therefore to obtain an \emph{integrability-in-time} estimate for $\|\nabla v_t\|_{L^\infty(\rho_0)}$, which guarantees that the deformation of the flow remains uniformly small for all times.

The argument proceeds in two steps. We first establish a technical Gr\"onwall-type lemma for a coupled system of differential inequalities. We then show that, under a sufficiently large coupling strength $\kappa$, the gradient system associated with the Lagrangian dynamics fits precisely into this framework, yielding the desired integrability. This proves Theorem \ref{thm:EA_global}.

 \begin{lemma}\label{lem:tech} Let $f,g \in C^1(\R_+; \R_+)$ satisfy the following a system of differential inequalities:
\begin{align*}
f'(t) &\leq g(t),\quad t > 0,\cr
g'(t) &\leq -a g(t) + b f(t) e^{-at},
\end{align*}
where $a,b > 0$. If $a > \sqrt{b}$, we have
\[
\int_0^\infty g(t)\,\dt \leq \frac{ag(0) + bf(0)}{(a^2-b)}. 
\]
 \end{lemma}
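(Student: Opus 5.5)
The plan is to integrate both inequalities in time and close a linear bound on $G(t):=\int_0^t g(s)\,\ds$, using only the nonnegativity of $f$ and $g$. Since the bound will not depend on $t$, letting $t\to\infty$ then gives the claim by monotone convergence.

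First, integrating $f'\le g$ gives, for every $s\ge0$,
\[
f(s)\le f(0)+\int_0^s g(r)\,\rd r .
\]
Next, I integrate the second inequality over $(0,t)$:
\[
g(t)-g(0)\le -a\,G(t)+b\int_0^t f(s)e^{-as}\,\ds .
\]

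The key step is to estimate the forcing term by inserting the bound on $f$ and exchanging the order of integration (Fubini, everything nonnegative):
\[
\int_0^t f(s)e^{-as}\,\ds \le f(0)\int_0^t e^{-as}\,\ds+\int_0^t g(r)\Big(\int_r^t e^{-as}\,\ds\Big)\rd r
\le \frac{f(0)}{a}+\frac{1}{a}\,G(t).
\]
Here I use $\int_r^t e^{-as}\,\ds\le e^{-ar}/a\le 1/a$ together with $g\ge0$. This is the only place where the exponential weight $e^{-at}$ matters: it converts the cumulative growth of $f$ into a term of size at most $\frac{b}{a}G(t)$.

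Substituting and discarding $g(t)\ge0$ yields
\[
\Big(a-\frac{b}{a}\Big)G(t)\le g(0)+\frac{b\,f(0)}{a}.
\]
By hypothesis $a>\sqrt b$, so $a^2-b>0$ and the coefficient on the left is positive. Dividing by it gives
\[
G(t)\le \frac{a\,g(0)+b\,f(0)}{a^2-b}\quad\text{for all } t>0 .
\]
Letting $t\to\infty$ gives the stated bound on $\int_0^\infty g(t)\,\dt$.

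There is no real obstacle in this argument. The only point needing care is the Fubini exchange and the uniform bound $\int_r^t e^{-as}\,\ds\le 1/a$. This is exactly where the condition $a^2>b$ enters, through the absorption of $\frac{b}{a}G(t)$ into the left-hand side.
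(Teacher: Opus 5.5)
Your argument is correct and gives exactly the paper's constant, using the same mechanism: insert $f(s)\le f(0)+\int_0^s g$, exchange the order of integration, and absorb the resulting $\frac{b}{a^2}\int_0^t g$ term using $a^2>b$. The only difference from the paper is that it first applies the integrating factor to get the pointwise bound $g(t)\le g(0)e^{-at}+be^{-at}\int_0^t f(s)\,\ds$ and then integrates a second time (evaluating $\int_\tau^t e^{-as}(s-\tau)\,\ds$), whereas you integrate the $g$-inequality once, let the damping produce $-aG(t)$ and discard $g(t)\ge0$, which saves one layer of integration.
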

 \begin{remark}\label{rmk_tech0} The same differential inequality was studied in \cite[Lemma 3.1]{HKZ18} to get the exponential decay of $g$ without the assumption $a > \sqrt b$. However, for simplicity, we obtain a sharper upper bound on $g$ under this assumption.
 \end{remark}
 \begin{proof}[Proof of Lemma \ref{lem:tech}] Observe that $f,g$ satisfy
 \[
 f(t) \leq f(0) + \int_0^t g(s)\,\ds, \quad g(t) \leq g(0) e^{-at} + b e^{-at} \int_0^t f(s)\,\ds.
 \]
 Thus, we obtain
\begin{align*}
 g(t) &\leq g(0) e^{-at} + b e^{-at} \int_0^t \lt(f(0) + \int_0^s g(\tau)\,\rd\tau \rt) \ds \cr
 &= g(0) e^{-at} + b f(0) t e^{-at} + b e^{-at} \int_0^t (t-\tau) g(\tau)\,\rd\tau.
\end{align*}
Integrating the above over $[0,t]$ gives
\begin{align*}
\int_0^t g(s)\,\ds &\leq \frac{g(0)}{a} (1 - e^{-at}) + \frac{bf(0)}{a^2}(1 - e^{-at}(1+at)) + b\int_0^t e^{-as} \int_0^s(s-\tau) g(\tau)\,\rd\tau \ds\cr
&=: I_t + II_t + III_t,
\end{align*}
where we estimate $III_t$ as
\[
III_t = b \int_0^t g(\tau) \int_\tau^t e^{-as} (s-\tau)\,\ds \rd\tau = \frac{b}{a^2}\int_0^t g(s) (e^{-as} - e^{-at} - a(t-s)e^{-at})\,\ds \leq \frac{b}{a^2}\int_0^t g(s) \,\ds.
\]
This gives
\[
\lt(1 - \frac{b}{a^2} \rt)\int_0^t g(s)\,\ds \leq I_t + II_t.
\]
This together with
\[
I_t \to \frac{g(0)}{a}, \quad II_t \to \frac{bf(0)}{a^2} \quad \text{as } t \to \infty,
\]
concludes the desired result.
 \end{proof}
 
We now apply Lemma \ref{lem:tech} to the gradient system associated with the Lagrangian $p$-alignment dynamics in the linear case $p=2$.

\begin{lemma}\label{lem:uni_g} Assume $p=2$ and the hypotheses of Theorem \ref{thm:Wp1}. Suppose that the initial data satisfy
\[
\rd_v(0) <  \kappa \int_{\rd_\eta(0)}^\infty \phi(r)\,\dr, \quad   \|\nabla\phi\|_{L^\infty} \rd_v(0) < \kappa \phi^2(\rd_\eta^\infty),
\]
where $\rd_\eta^\infty > 0$ is given by the relation,
\[
\rd_v(0) =   \kappa \int_{\rd_\eta(0)}^{\rd_\eta^\infty} \phi(r)\,\dr.
\]
 Then we have
 \[
 \int_0^\infty \|\nabla v_t\|_{L^\infty(\rho_0)}\,\dt \leq \frac{\phi(\rd_\eta^\infty)\|\nabla u_0\|_{L^\infty(\rho_0)}+ \|\nabla\phi\|_{L^\infty} \rd_v(0)}{\kappa \phi^2(\rd_\eta^\infty) - \|\nabla\phi\|_{L^\infty} \rd_v(0)}.
 \]
\end{lemma}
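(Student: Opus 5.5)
The plan is to show that for $p=2$ the gradient system \eqref{LA_grad} fits exactly into Lemma \ref{lem:tech}, with
\[
f(t):=\|\nabla\eta_t\|_{L^\infty(\rho_0)},\qquad g(t):=\|\nabla v_t\|_{L^\infty(\rho_0)},\qquad a:=\kappa\phi(\rd_\eta^\infty),\qquad b:=\kappa\|\nabla\phi\|_{L^\infty}\rd_v(0).
\]
With these choices the claimed bound is precisely the conclusion of Lemma \ref{lem:tech}. Indeed $f(0)=\|I_d\|=1$ and $g(0)=\|\nabla u_0\|_{L^\infty(\rho_0)}$, so $\frac{ag(0)+bf(0)}{a^2-b}$ equals the stated expression after dividing numerator and denominator by $\kappa$. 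The hypothesis $a>\sqrt b$, i.e.\ $\kappa^2\phi^2(\rd_\eta^\infty)>\kappa\|\nabla\phi\|_{L^\infty}\rd_v(0)$, is exactly the second assumption of the lemma.

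\textbf{Step 1: flocking input.} Apply Remark \ref{rmk:p-sub3} with $p=2$. There the threshold condition reads $\rd_v(0)<\kappa\int_{\rd_\eta(0)}^\infty\phi(r)\,\dr$, which is our first assumption. It gives $\sup_{t\ge0}\rd_\eta(t)\le\rd_\eta^\infty$ with $\rd_\eta^\infty$ defined by the stated relation, together with
\[
\rd_v(t)\le\rd_v(0)e^{-\kappa\phi(\rd_\eta^\infty)t}=\rd_v(0)e^{-at}.
\]
Since $\phi$ is radially non-increasing, for $\rho_0$-a.e.\ $x$ we also get $A_t(x):=\int\phi(\eta_t(x)-\eta_t(y))\,\rho_0(\dy)\ge\phi(\rd_\eta^\infty)$.

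\textbf{Step 2: pointwise gradient equation.} For $p=2$ we have $\nabla G_2=I$, so \eqref{LA_grad} becomes, for $\rho_0$-a.e.\ $x$,
\[
\pa_t\nabla v_t(x)=-\kappa A_t(x)\nabla v_t(x)+S_t(x),\qquad S_t(x):=\kappa\intr\nabla\eta_t(x)\nabla\phi(\eta_t(x)-\eta_t(y))\,(v_t(y)-v_t(x))\,\rho_0(\dy).
\]
The source satisfies $|S_t(x)|\le\kappa\|\nabla\phi\|_{L^\infty}\rd_v(t)f(t)\le b\,e^{-at}f(t)$. Rather than differentiating the sup-norm, which is only Lipschitz, I would solve this linear ODE for each fixed $x$ with the integrating factor $e^{\kappa\int_0^tA_s(x)\,\ds}$. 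Using $A\ge\phi(\rd_\eta^\infty)$, so that $e^{-\kappa\int_s^tA}\le e^{-a(t-s)}$, this yields
\[
g(t)\le g(0)e^{-at}+\int_0^t e^{-a(t-s)}\,b\,e^{-as}f(s)\,\ds=g(0)e^{-at}+be^{-at}\int_0^tf(s)\,\ds,
\]
after taking the essential supremum in $x$. Similarly $\pa_t\nabla\eta=\nabla v$ gives $f(t)\le f(0)+\int_0^tg(s)\,\ds$.

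\textbf{Step 3: conclude.} These two integral inequalities are exactly what the proof of Lemma \ref{lem:tech} uses; that proof never uses differentiability of $f$ and $g$. So the remainder of that argument applies verbatim and gives the stated bound on $\int_0^\infty g$.

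The main obstacle is the regularity issue in Step 2: $f$ and $g$ are not $C^1$, so Lemma \ref{lem:tech} cannot be quoted literally. I handle this by working with the pointwise ODE in $x$ and passing to the integral form before taking essential suprema, which is legitimate because $(\eta-{\rm id},v)\in C^1([0,\infty);W^{1,\infty}(\rho_0))$ by Theorem \ref{thm:Wp1}. A secondary check is that the factor $e^{-as}$ from $\rd_v$ combines with $e^{-a(t-s)}$ to give exactly $e^{-at}$; this is what makes the constant match the form $be^{-at}\int_0^t f$ required by the lemma.
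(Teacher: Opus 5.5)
Your proof is correct and follows the paper's argument: you use the same $f,g,a,b$, the same flocking input from Remark \ref{rmk:p-sub3}, and the same application of Lemma \ref{lem:tech}. The only difference is that you reach the integral inequalities through a pointwise integrating factor, whereas the paper derives upper Dini derivative bounds; your version avoids the mismatch that Lemma \ref{lem:tech} is stated for $C^1$ functions while $\|\nabla\eta_t\|_{L^\infty(\rho_0)}$ and $\|\nabla v_t\|_{L^\infty(\rho_0)}$ are only Lipschitz in time.
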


\begin{remark} Note that 
\[
\rd_v(0) =   \kappa \int_{\rd_\eta(0)}^{\rd_\eta^\infty} \phi(r)\,\dr
 \] 
 implies that $\rd_\eta^\infty \to \rd_\eta(0)$ as $\kappa \to \infty$. On the other hand, 
 \[
   \|\nabla\phi\|_{L^\infty} \rd_v(0) < \kappa \phi^2(\rd_\eta^\infty) = \frac{\rd_v(0) \phi^2(\rd_\eta^\infty)}{\int_{\rd_\eta(0)}^{\rd_\eta^\infty} \phi(r)\,\dr},
\]
and the right-hand side diverges to $+\infty$ as $\kappa \to \infty$. This shows that the conditions in Lemma \ref{lem:uni_g} are satisfied for sufficiently large $\kappa > 0$.
\end{remark}

\begin{proof}[Proof of Lemma \ref{lem:uni_g}] It follows from \eqref{LA_grad} that
\begin{align*}
\pa_t \nabla \eta_t &= \nabla v_t, \cr
\pa_t \nabla v_t &= \kappa \intr \nabla \eta_t(x) \nabla\phi(\eta_t(x) - \eta_t(y)) (v_t(y) - v_t(x))\,\rho_0(\dy)   - \kappa \intr \phi(\eta_t(x) - \eta(y))   \rho_0(\dy)\nabla v_t(x)\,.
\end{align*}
We first readily find
\[
D^+ \|\nabla \eta_t\|_{L^\infty(\rho_0)} \leq \|\nabla v_t\|_{L^\infty(\rho_0)},
\]
where $D^+$ denotes the upper right Dini derivative. The justification of this $L^\infty$ estimate follows from the same $\varepsilon$-maximizer argument used in the proof of Theorem \ref{thm:ext}.

For $\|\nabla v_t\|_{L^\infty(\rho_0)}$, we note that 
\[
\lt| \intr \nabla \eta(x) \nabla\phi(\eta(x) - \eta(y))  (v(y) - v(x))\,\rho_0(\dy)\rt| \leq \|\nabla \eta_t\|_{L^\infty(\rho_0)}\|\nabla\phi\|_{L^\infty} \rd_v.
\] 
Moreover, since $|\eta(x)-\eta(y)|\le \rd_\eta^\infty$ and $\phi$ is non-increasing,
\[
\intr \phi(\eta(x)-\eta(y))\,\rho_0(\dy)\ge \phi(\rd_\eta^\infty),
\]
so that the second term provides a linear damping of $\nabla v$. This together with Theorem \ref{thm:ext} yields
\[
D^+ \|\nabla v_t\|_{L^\infty(\rho_0)} \leq \kappa \|\nabla\phi\|_{L^\infty}\|\nabla \eta_t\|_{L^\infty(\rho_0)} \rd_v(0) e^{- \kappa \phi(\rd_\eta^\infty) t} - \kappa \phi(\rd_\eta^\infty)\|\nabla v_t\|_{L^\infty(\rho_0)}.
\] 
Thus, if we set $c_1 :=  \|\nabla\phi\|_{L^\infty} \rd_v(0)$, we arrive at
\begin{align*}
D^+ \|\nabla \eta_t\|_{L^\infty(\rho_0)} &\leq \|\nabla v_t\|_{L^\infty(\rho_0)}, \cr
D^+ \|\nabla v_t\|_{L^\infty(\rho_0)} &\leq - \kappa \phi(\rd_\eta^\infty)\|\nabla v_t\|_{L^\infty(\rho_0)} + \kappa c_1 \|\nabla \eta_t\|_{L^\infty(\rho_0)}   e^{- \kappa \phi(\rd_\eta^\infty) t} .
\end{align*}
Since the assumption on the initial data implies
\[
\kappa \phi(\rd_\eta^\infty) > \sqrt{\kappa c_1},
\]
applying Lemma \ref{lem:tech}, we have the desired bound estimate.
  \end{proof}

 \begin{remark}[On the case $p\in(2,3)$]\label{rem:p23}
The argument of Lemma \ref{lem:uni_g} is specific to the linear velocity coupling $p=2$,  where the gradient system for $(\nabla\eta_t,\nabla v_t)$ closes with a constant damping rate and an exponentially decaying source term.  This allows us to apply Lemma \ref{lem:tech} and deduce that
\[
\int_0^\infty \|\nabla v_t\|_{L^\infty(\rho_0)}\,\dt < \infty
\]
under suitable assumptions on the configurations.

When $p\in(2,3)$, Theorem \ref{thm:ext} still provides flocking and the algebraic decay of the velocity diameter,
\[
\rd_v(t) \lesssim \big(1+t\big)^{-\frac{1}{p-2}}.
\]
However, the proof of this flocking estimate only yields \emph{upper} bounds on the averaged alignment modulus
\[
\intr\phi(\eta_t(x)-\eta_t(y)) |v_t(y)-v_t(x)|^{p-2}\,\rho_0(\dy),
\]
and does not provide any uniform \emph{lower} bound that could play the role of a time-dependent damping coefficient in the gradient system. In particular, we are not able to deduce an inequality of the type
\[
D^+\|\nabla v_t\|_{L^\infty(\rho_0)} \le -\mu(t) \|\nabla v_t\|_{L^\infty(\rho_0)} + \text{(integrable source)}, \quad  \mu(t)\gtrsim \frac{1}{1+t},
\]
which would be the natural analogue of Lemma \ref{lem:tech} in the nonlinear regime.

As a consequence, for $p\in(2,3)$ the flocking estimate alone does not allow us to decide whether
\[
\int_0^\infty \|\nabla v_t\|_{L^\infty(\rho_0)}\,\dt
\]
is finite or infinite in general. Establishing either integrability or non-integrability of $\|\nabla v_t\|_{L^\infty(\rho_0)}$ for $p\in(2,3)$ would require additional non-degeneracy assumptions on the distribution of velocities, beyond the diameter decay provided by Theorem \ref{thm:ext}.
\end{remark}
 
 \begin{proof}[Proof of Theorem \ref{thm:EA_global}]Under the assumptions of Theorem \ref{thm:EA_global}, we obtain from Lemma \ref{lem:uni_g} that 
  \[
 \int_0^\infty \|\nabla v_t\|_{L^\infty(\rho_0)}\,\dt \leq \frac{\phi(\rd_\eta^\infty)\|\nabla u_0\|_{L^\infty(\rho_0)}+ \|\nabla\phi\|_{L^\infty} \rd_v(0)}{\kappa \phi^2(\rd_\eta^\infty) - \|\nabla\phi\|_{L^\infty} \rd_v(0)} < 1.
 \]
Consequently, we have
\[
\Big\|\int_0^t \nabla v_s\,\ds\Big\|_{L^\infty(\rho_0)} < 1
\quad \text{for all } t \ge 0,
\]
and the Lagrangian flow $\eta_t$ remains injective for all times. Hence the induced disintegration is Dirac, the Reynolds stress vanishes identically, and the associated Eulerian pair $(\rho_t,u_t)$ solves the Euler--alignment system globally in time. This completes the proof.
 \end{proof}

%
%
%
%
%

 \section{Uniform-in-time mean-field limits}\label{sec:uni_mf}
 
This section is devoted to the proofs of Theorems \ref{thm:deri_LA} and \ref{thm:mf}. Our analysis proceeds in two steps.

First, we establish a uniform-in-time stability estimate between the $N$-particle Cucker--Smale system \eqref{CS} and the limiting nonlinear Lagrangian dynamics \eqref{Lag_p} in the case of linear velocity coupling, i.e.  $p=2$. This step is formulated entirely at the level of characteristics and does not require any injectivity assumption on the Lagrangian flow. The resulting estimate yields a quantitative, uniform-in-time control of the error between the particle trajectories and the limiting Lagrangian flow.

Second, we combine this Lagrangian stability estimate with the Lagrangian--Eulerian correspondence of Section \ref{ssec:ERA} to obtain a uniform-in-time mean-field convergence toward a mono-kinetic Eulerian state. At this stage, an almost everywhere Dirac disintegration condition along the Lagrangian flow is required in order to identify the kinetic limit with an Eulerian pair $(\rho_t,u_t)$, which satisfies the Euler--alignment system \eqref{eq:EA-2} in the sense of distributions.

%
%
%
%
%

\subsection{Uniform-in-time stability to Lagrange--alignment flow}
In this subsection, we prove the uniform-in-time stability estimate at the level of Lagrangian characteristics, which constitutes the first step in the proof of Theorem \ref{thm:deri_LA}. The argument relies on a simple system of differential inequalities satisfied by the modulated Lagrangian quantities, whose abstract structure is isolated in the following lemma.

\begin{lemma}\label{lem1} Let $f,g \in C^1(\R_+; \R_+)$ and $h \in L^1(\R_+;\R_+)$ satisfy the following system of differential inequalities:
\begin{align*}
f'(t) &\leq g(t),\quad t > 0,\cr
g'(t) &\leq -c_0 g(t) + h(t) f(t)
\end{align*}
for some $c_0 > 0$. Then there exists $C = C(c_0, \|h\|_{L^1})$ such that
\[
f(t) + g(t) \leq C (f(0) + g(0)), \quad t \geq 0.
\] 
Moreover, we have
\[
g(t) \to 0 \quad \mbox{as } t \to \infty.
\]
\end{lemma}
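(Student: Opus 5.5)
We integrate the two inequalities into a Volterra-type system and close it with a Grönwall argument on $f$; the key is that $h\in L^1$ makes the feedback of $f$ on itself integrable.

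\textbf{Step 1: Duhamel bounds.} By the integrating factor $e^{c_0t}$, the second inequality gives
\[
g(t)\le g(0)e^{-c_0t}+\int_0^t e^{-c_0(t-s)}h(s)f(s)\,\ds .
\]
Integrating the first inequality and inserting this bound, then using Fubini with $\int_s^t e^{-c_0(\tau-s)}\,\rd\tau\le 1/c_0$, we get
\[
f(t)\le f(0)+\frac{g(0)}{c_0}+\frac1{c_0}\int_0^t h(s)f(s)\,\ds .
\]

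\textbf{Step 2: Grönwall on $f$.} The integral form of Grönwall's lemma then yields
\[
\sup_{t\ge0} f(t)\le \Big(f(0)+\frac{g(0)}{c_0}\Big)\exp\big(\|h\|_{L^1}/c_0\big)=:F.
\]
Substituting back into the Duhamel bound gives
\[
g(t)\le g(0)+F\|h\|_{L^1}.
\]
Adding the two bounds gives $f(t)+g(t)\le C(f(0)+g(0))$, with $C$ depending only on $c_0$ and $\|h\|_{L^1}$.

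\textbf{Step 3: decay of $g$.} With $F$ from Step 2, the Duhamel bound becomes
\[
g(t)\le g(0)e^{-c_0t}+F\int_0^t e^{-c_0(t-s)}h(s)\,\ds .
\]
Split the integral at $s=t/2$:
\[
\int_0^{t/2}e^{-c_0(t-s)}h(s)\,\ds\le e^{-c_0t/2}\|h\|_{L^1},\qquad
\int_{t/2}^t e^{-c_0(t-s)}h(s)\,\ds\le\int_{t/2}^\infty h(s)\,\ds .
\]
The first term decays exponentially, and the second is an $L^1$ tail that tends to $0$. Hence $g(t)\to0$ as $t\to\infty$.

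Since $f,g\in C^1$, the integrated forms in Step 1 are rigorous (Dini derivatives would also suffice), so there is no real obstacle. The only point needing care is the order of operations: we bound $f$ first through the integrated $g$-inequality, rather than running a joint Grönwall on $f+g$. A joint Grönwall on $f+g$ would produce a factor $e^{t}$ from $f'\le g$, which is not uniform in time.
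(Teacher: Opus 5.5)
Your proof is correct. For the uniform bound, the paper uses the weighted Lyapunov functional $\mathcal{L}=f+g/c_0$. For it, $\mathcal{L}'\le hf/c_0\le h\mathcal{L}/c_0$ (using $g\ge0$), so a single differential Gr\"onwall controls $f$ and $g$ at once. Your Duhamel--Fubini computation reaches the same integrated inequality $f(t)\le f(0)+g(0)/c_0+c_0^{-1}\int_0^t hf\,\mathrm{d}s$, but without the term $g(t)/c_0$ on the left, which is why you need a second substitution to bound $g$. Your closing remark is right for the unweighted sum $f+g$, but the weighted sum $f+g/c_0$ does close, and that is exactly the paper's route.

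The decay step is where the arguments genuinely differ. The paper sets $\ell(t)=g(t)+\int_t^\infty hf\,\mathrm{d}s$, notes $\ell'\le -c_0g\le0$, and shows by contradiction that $\lim_{t\to\infty}\ell(t)=0$; this is purely qualitative. Your splitting of the convolution at $s=t/2$ is more direct and gives the explicit bound
\[
g(t)\le g(0)e^{-c_0t}+F\Bigl(e^{-c_0t/2}\|h\|_{L^1}+\int_{t/2}^\infty h(s)\,\mathrm{d}s\Bigr).
\]
So the decay rate of $g$ can be read off from the tail of $h$; for example, exponentially decaying $h$ gives exponential decay of $g$.
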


\begin{proof}We introduce the Lyapunov-type functional
\[
\calL (t) := f(t) + \frac{1}{c_0}g(t).
\]
Using the differential inequalities satisfied by $f$ and $g$, we compute
\[
\frac{\rd}{\dt}\calL \leq g+\frac{1}{c_0}  \lt(-c_0 g+h f\rt) =\frac{1}{c_0} hf \leq \frac{1}{c_0} h\calL.
\]
An application of Gr\"onwall's inequality yields
\[
f(t) + \frac1{c_0}g(t) \leq \lt( f(0) + \frac1{c_0}g(0)\rt)e^{\frac1{c_0}\|h\|_{L^1}}.
\]
As a consequence, we have
\[
f(t) + g(t) \leq \max\lt\{c_0, \frac1{c_0}\rt\}e^{\frac1{c_0}\|h\|_{L^1}}\lt(f(0) + g(0)\rt).
\]
 
To analyze the large-time behavior of $g$, we first note that the above bound implies the uniform estimate
\[
f(t) \leq M_0, \quad t \geq 0,
\]
for some constant $M_0>0$. We then consider the auxiliary function
\[
\ell(t) := g(t) + \int_t^\infty h(s) f(s) \,\ds. 
\]
Since $f$ is uniformly bounded in time, $g \in C^1(\R_+)$, and $h \in L^1(\R_+)$, $\ell:\R_+ \to \R_+$ is well-defined and differentiable. Moreover, we obtain
\[
\ell'(t) = g'(t) - h(t) f(t) \leq -c_0 g(t) \leq 0,
\]
and thus, $\ell$ is decreasing, i.e. $\ell(t) \downarrow \ell_\infty$ as $t \to \infty$ for some $\ell_\infty \geq 0$. We claim that $\ell_\infty = 0$. Suppose not, i.e., $\ell_\infty > 0$. Then for $ \ell_\infty > \e >0$, there exists $T >0$ such that
\[
\int_T^\infty h(t) f(t) \,\dt \leq \frac\e2
\]
since $h\in L^1(\R_+)$ and $f$ is bounded. This gives that for $t \geq T$
\[
g(t) = \ell(t) - \int_t^\infty h(s) f(s) \,\ds \ge \ell_\infty - \frac\e2 \ge \frac{\ell_\infty}2.
\]
Thus,
\[
\ell'(t) \leq -c_0 g(t)  \leq - \frac{c_0 \ell_\infty}{2}.
\]
This implies that $\ell(t)$ becomes negative in finite time, which contradicts the fact that $\ell(t)\ge0$ for all $t\ge0$. Therefore $\ell_\infty=0$, and since $g(t)\le \ell(t)$, we conclude that $g(t)\to0$ as $t\to\infty$.
\end{proof}

\begin{remark}
As mentioned in Remark \ref{rmk_tech0}, a similar estimate was obtained  in \cite[Lemma 3.1]{HKZ18} for $h(t) = \gamma e^{-c_0 t}$ for $\gamma > 0$. For our purpose, we refine the proof and relax the assumption on $h$.
\end{remark}

We now apply Lemma \ref{lem1} to the modulated Lagrangian quantities introduced in Section \ref{sssec:mf}, which measure the discrepancy between the particle system and the limiting Lagrangian flow. For $q\ge1$, these quantities are defined by
\begin{align*}
\mathscr{E}_q(X^N|\eta)(t)  &= \lt( \frac1N\sum_{i=1}^N \intr |x_i(t) - \eta_t(x)|^q \,\rho_0(\dx)\rt)^\frac1q, \cr
\mathscr{E}_q(V^N|v)(t) &= \lt(\frac1N\sum_{i=1}^N \intr |v_i(t) - v_t(x)|^q \,\rho_0(\dx)\rt)^\frac1q.
\end{align*}
We also assume 
\[
\frac1N \sum_{i=1}^N v_i(0) = \intr  u_0\,\rho_0(\dx).
\]
  
With these preparations in hand, we are ready to prove the uniform-in-time Lagrangian stability estimate stated in Theorem \ref{thm:deri_LA}.

\begin{proof}[Proof of Theorem \ref{thm:deri_LA}]
 Differentiating and using H\"older's inequality, we get
\begin{align*}
\frac{\rd}{\dt} \mathscr{E}_q^q(X^N|\eta)(t) &= \frac qN\sum_{i=1}^N \intr |x_i(t) - \eta_t(x)|^{q-2}(x_i(t) - \eta_t(x)) \cdot (v_i(t) - v_t(x))\,\rho_0(\dx) \cr
& \leq q\mathscr{E}_q^{q-1}(X^N|\eta)(t) \mathscr{E}_q(V^N|v)(t),
\end{align*}
and hence
\bq\label{eq_pos}
\frac{\rd}{\dt} \mathscr{E}_q(X^N|\eta)(t) \leq \mathscr{E}_q(V^N|v)(t).
\eq

We next estimate the velocity error. Differentiating $\mathscr{E}_q^q(V^N|v)$ gives
\begin{align*}
&\frac{\rd}{\dt} \mathscr{E}_q^q(V^N|v)(t) \cr
&\quad = \kappa\frac qN \sum_{i=1}^N \intr |v_i - v_t(x)|^{q-2} (v_i - v_t(x)) \cr
&\hspace{2cm} \cdot \lt\{\frac1N \sum_{j=1}^N\phi (x_j - x_i)(v_j - v_i) - \intr \phi(\eta_t(x) - \eta_t(y)) (v_t(y) - v_t(x))\,\rho_0(\dy) \rt\}\,\rho_0(\dx)\cr
&\quad = \kappa\frac q{N^2} \sum_{i,j=1}^N \intrr  |v_i - v_t(x)|^{q-2}(v_i - v_t(x))\cdot (v_j - v_i)  \cr
&\hspace{4cm} \times  \lt(\phi(x_i - x_j) - \phi(\eta_t(x) - \eta_t(y)) \rt)  \,\rho_0(\dx)\rho_0(\dy) \cr
&\quad + \kappa\frac q{N^2} \sum_{i,j=1}^N \intrr  |v_i - v_t(x)|^{q-2}(v_i - v_t(x)) \cr
&\hspace{4cm} \cdot   \phi(\eta_t(x) - \eta_t(y))\lt\{ (v_j - v_i) - (v_t(y) - v_t(x)) \rt\}  \rho_0(\dx)\rho_0(\dy) \cr
&\quad=: I + II,
\end{align*}
 where we use Lipschitz continuity of $\phi$ to obtain
 \begin{align*}
 |I| &\leq \kappa  \|\phi\|_{\rm Lip} \rd_{V^N}(t)\frac q{N^2} \sum_{i,j=1}^N \intrr |v_i - v_t(x)|^{q-1} \lt( |x_i - \eta_t(x)| + |x_j  - \eta_t(y)|\rt)\,\rho_0(\dx)\rho_0(\dy)\cr
 &\leq  \kappa q \|\phi\|_{\rm Lip} \rd_{V^N}(t) \mathscr{E}_q(X^N|\eta)(t)  \mathscr{E}_q^{q-1}(V^N|v)(t).
 \end{align*}

Symmetrizing in $(i,j)$ and $(x,y)$ the expression of $II$ and using 
\[
\phi\ge\phi_m:= \inf_{t \geq 0}\phi(\rd_\eta(t))>0\,,
\] since by assumption \eqref{asp0}, we can ensure that
$$
\sup_{t\ge0} \rd_\eta(t)<\infty\,,
$$ 
together with the monotonicity
  \[
( |x|^{q-2}x - |y|^{q-2}y) \cdot (x-y) \geq 0, \quad x,y \in \R^d, \ q> 1,
  \]
we estimate $II$ as
 \begin{align*}
 II &= -\kappa\frac q{2N^2} \sum_{i,j=1}^N \intrr \phi(\eta_t(x) - \eta_t(y)) \lt\{(v_i - v_t(x)) - (v_j - v_t(y))\rt\} \cr
& \hspace{3cm} \cdot \lt\{(v_i - v_t(x))|v_i - v_t(x)|^{q-2} - (v_j - v_t(y))|v_j - v_t(y)|^{q-2} \rt\}\,\rho_0(\dx)\rho_0(\dy)\cr
 &\leq -\kappa\frac{ q\phi_m}{2N^2} \sum_{i,j=1}^N \intrr  \lt\{(v_i - v_t(x)) - (v_j - v_t(y))\rt\} \cr
& \hspace{3cm} \cdot \lt\{(v_i - v_t(x))|v_i - v_t(x)|^{q-2} - (v_j - v_t(y))|v_j - v_t(y)|^{q-2} \rt\}\rho_0(\dx)\rho_0(\dy)\cr
 &\leq -\kappa\frac{ q\phi_m}{N^2} \sum_{i,j=1}^N \intrr  \lt\{(v_i - v_t(x)) - (v_j - v_t(y))\rt\}  \cdot \lt\{(v_i - v_t(x))|v_i - v_t(x)|^{q-2}  \rt\} \rho_0(\dx)\rho_0(\dy)\cr
 &= - \kappa q\phi_m  \mathscr{E}_q^q(V^N|v)(t).
  \end{align*}
  
Combining the above estimates yields
\[
\frac{\rd}{\dt}\mathscr{E}_q(V^N|v)(t) \leq -\kappa\phi_m  \mathscr{E}_q(V^N|v)(t) + \kappa  \|\phi\|_{\rm Lip} \rd_{V^N}(t) \mathscr{E}_q(X^N|\eta)(t).
\]
Together with \eqref{eq_pos}, and applying Lemma \ref{lem1}, we deduce
\[
\mathscr{E}_q(X^N|\eta)(t) + \mathscr{E}_q(V^N|v)(t) \leq C\lt(\mathscr{E}_q(X^N|\eta)(0) + \mathscr{E}_q(V^N|v)(0) \rt),
\]
where $C>0$ is independent of $q$, $N$, and $t$. This completes the proof.
 \end{proof}

%
%
%
%
%
\subsection{Uniform-in-time mean-field limit to Euler--alignment system}
In this part, we convert the uniform-in-time Lagrangian stability estimates established in the previous subsection into uniform-in-time mean-field convergence results at the Eulerian level. The key step consists in relating the modulated Lagrangian quantities to Wasserstein distances between the empirical particle measure and the corresponding Eulerian state.

We first derive a basic Wasserstein estimate under the mono-kinetic reduction induced by the Dirac structure of the disintegration along the Lagrangian flow map $\eta_t$, in which case the Eulerian state is represented by the measure $\rho_t\otimes\delta_{u_t}$.

\begin{lemma}\label{lem:d_pZ_p}
Let $q \in [1,\infty]$, and let $(\rho_t,u_t)$ be the Eulerian pair associated with the Lagrangian flow $(\eta_t,v_t)$ through the Lagrangian--Eulerian correspondence described in Section \ref{ssec:LEc}. Assume that $\mathscr{E}_q(X^N|\eta)(t) < \infty$ and $\mathscr{E}_q(V^N|v)(t) < \infty$.   Suppose in addition that the disintegration of $\rho_0$ along $\eta_t$ is Dirac $\rho_t$-a.e.  Then we have
\[
{\rd}_q \lt(\mu_t^N, \rho_t\otimes\delta_{u_t}\rt)  \le 2^{1 - \frac1q}\lt(\mathscr{E}_q(X^N|\eta)(t)^q+\mathscr{E}_q(V^N|v)(t)^q\rt)^{\frac1q},
\]
with the usual interpretation when $q=\infty$.
\end{lemma}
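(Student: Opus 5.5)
We construct an explicit coupling between $\mu_t^N$ and $\rho_t\otimes\delta_{u_t}$ and bound its transport cost by the modulated energies. First we identify the target measure. If the disintegration $\{\nu_{t,z}\}$ of $\rho_0$ along $\eta_t$ is Dirac $\rho_t$-a.e., write $\nu_{t,z}=\delta_{\chi_t(z)}$. The fibre-average representation of Lemma \ref{lem:ERA-prep}(ii) then gives $u_t(z)=v_t(\chi_t(z))$. Combining this with the disintegration identity $\rho_0=\intr\nu_{t,z}\,\rho_t(\dz)$, we get $u_t(\eta_t(x))=v_t(x)$ for $\rho_0$-a.e.\ $x$. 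Consequently the kinetic lift satisfies
\[
(\eta_t,v_t)_\#\rho_0=(\eta_t,u_t\circ\eta_t)_\#\rho_0=({\rm id},u_t)_\#\rho_t=\rho_t\otimes\delta_{u_t}.
\]
To check this, test against a bounded Borel $\psi(z,\xi)$ and use $\rho_t=(\eta_t)_\#\rho_0$.

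Next we build the coupling. Define
\[
\pi_t:=\frac1N\sum_{i=1}^N \delta_{(x_i(t),v_i(t))}\otimes (\eta_t,v_t)_\#\rho_0 .
\]
Its first marginal is $\mu_t^N$ and its second marginal is $\rho_t\otimes\delta_{u_t}$, so $\pi_t$ is an admissible coupling. We use the phase-space cost $|(x,\xi)-(y,\zeta)|^q$ with the Euclidean norm on $\R^{2d}$. Then
\[
\rd_q^q(\mu_t^N,\rho_t\otimes\delta_{u_t})\le \frac1N\sum_{i=1}^N\intr \bigl|(x_i-\eta_t(x),\,v_i-v_t(x))\bigr|^q\,\rho_0(\dx).
\]
We bound the integrand by $(a+b)^q\le 2^{q-1}(a^q+b^q)$, where $a=|x_i-\eta_t(x)|$ and $b=|v_i-v_t(x)|$. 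Summing and integrating gives exactly $2^{q-1}\bigl(\mathscr{E}_q(X^N|\eta)^q+\mathscr{E}_q(V^N|v)^q\bigr)$. Taking $q$-th roots yields the claimed factor $2^{1-1/q}$.

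For $q=\infty$ we use the same coupling. Its support is contained in the closure of the set $\{((x_i,v_i),(\eta_t(x),v_t(x)))\}$ for $\rho_0$-a.e.\ $x$. The cost is therefore at most the maximum over $i$ of the $\rho_0$-essential supremum of $|x_i-\eta_t(x)|+|v_i-v_t(x)|$. This is at most $2\max\{\mathscr{E}_\infty(X^N|\eta),\mathscr{E}_\infty(V^N|v)\}$, which matches the limiting interpretation of the formula, since $2^{1-1/q}(\,\cdot\,)^{1/q}\to 2\max$.

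The only delicate point is the first step. We must make sure that the Dirac property holds $\rho_t$-a.e.\ and that the null sets of $\rho_t$ and $\rho_0$ are handled correctly, so that $v_t=u_t\circ\eta_t$ holds $\rho_0$-a.e. This follows from the disintegration identity applied to the exceptional set $\{x: v_t(x)\neq u_t(\eta_t(x))\}$: for $\rho_t$-a.e.\ $z$ this set has $\nu_{t,z}$-measure zero, because $\nu_{t,z}=\delta_{\chi_t(z)}$ and $u_t(z)=v_t(\chi_t(z))$. Integrating over $z$ against $\rho_t$ then shows it is a $\rho_0$-null set.
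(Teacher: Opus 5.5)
Your proposal is correct and is essentially the paper's proof. It uses the same product coupling of $\mu_t^N$ with $\rho_t\otimes\delta_{u_t}$ (which you write as $(\eta_t,v_t)_\#\rho_0$), the same bound $|(a,b)|^q\le 2^{q-1}(|a|^q+|b|^q)$, and the identity $v_t=u_t\circ\eta_t$ $\rho_0$-a.e.\ from the Dirac disintegration, which you justify explicitly where the paper uses it tacitly in its final equality, and you also spell out the case $q=\infty$ that the paper leaves to the usual interpretation.
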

\begin{proof}
Consider the product coupling
\[
\pi := (\rho_t\otimes\delta_{u_t})\otimes\mu_t^N \in \Pi(\rho_t\otimes\delta_{u_t},\mu_t^N).
\]
Since $|(x-y,v-w)|^q \le 2^{q-1}(|x-y|^q+|v-w|^q)$, we have
\begin{align*}
{\rd}_q^q \lt(\mu_t^N, \rho_t\otimes\delta_{u_t}\rt)
&\le 2^{q-1}\iiint_{\R^d \times \R^d \times \R^d} \lt(|x-y|^q+|u_t(x)-w|^q\rt) \rho_t(dx)\mu_t^N(\dy,\dw)\cr
&\le 2^{q-1}\intrr |\eta_t(x)-y|^q \,\rho_0(dx)\rho_t^N(\dy) \cr
&\quad + 2^{q-1}\iiint_{\R^d \times \R^d \times \R^d} |u_t(\eta_t(x))-w|^q\, \rho_0(\dx)\mu_t^N(\dy,\dw)\cr
& = 2^{q-1}\lt(\mathscr{E}_q(X^N|\eta)(t)^q + \mathscr{E}_q(V^N|v)(t)^q\rt).
\end{align*}
This completes the proof.
\end{proof}

\begin{remark}
Lemma \ref{lem:d_pZ_p} is specific to the mono-kinetic regime, in which the
disintegration of $\rho_0$ along $\eta_t$ is Dirac and the velocity field satisfies $v_t(x)=u_t(\eta_t(x))$ for $\rho_0$-a.e.  $x$. If the disintegration is not Dirac, the fibre decomposition
\[
v_t(x)=u_t(\eta_t(x))+\omega_t(x), \quad  \intr \omega_t\,\nu_{t,z}(\dx)=0,
\]
induces a nontrivial Reynolds stress $\tau_t=\rho_t\,\theta_t$ and a nonlinear defect force $\calR_p[\rho_t,u_t]$ in the ERA system of Theorem \ref{thm:ERA}. In this general case, the microscopic state generated by the Lagrangian flow is the kinetic measure $(\eta_t,v_t)\#\rho_0$, rather than the mono-kinetic ansatz $\rho_t\otimes\delta_{u_t}$, and the above argument cannot be used directly to control $\rd_q(\mu_t^N,\rho_t\otimes \delta_{u_t})$. Instead, one would need a stability estimate at the level of the full ERA dynamics, where the stress tensor and defect force appear explicitly.
\end{remark}

\begin{remark} The finiteness of the modulated quantities $\mathscr{E}_q(X^N|\eta)(t)$ and $\mathscr{E}_q(V^N|v)(t)$ is equivalent to the moment conditions $\rho_t \in \calP_q(\R^d)$ and $u_t \in L^q(\rho_t)$.
\end{remark}

\begin{remark}As a direct consequence of the definition of $\mathscr{E}_q(X^N|\eta)$, the Wasserstein distance between the particle density $\rho_t^N$ and its macroscopic counterpart $\rho_t$ can be controlled as follows. Since 
\[
\rho_t^N= \intr \mu^N_t(\dw) = \frac1N\sum_{i=1}^N\delta_{x_i(t)} \in \calP_q(\R^d) 
\]
and $\pi=\rho_t^N \otimes \rho_t $ belongs to $\Pi(\rho_t^N,\rho_t)$, we have
\[
{\rd}_q^q(\rho_t^N,\rho_t) \le \intrr |x-y|^q\,\rho_t(\dx)\rho_t^N(\dy)= \mathscr{E}_q^q(X^N|\eta)(t).
\]
\end{remark}
 
We next estimate the discrepancy between the particle momentum density and its Eulerian counterpart in the bounded Lipschitz distance.
\begin{lemma} Let $q \in [1,\infty]$, $m_t^N := \intr v\,\mu_t^N(\dv) = \frac1N\sum_{i=1}^N v_i(t)\,\delta_{x_i(t)}$, and $m_t := \rho_t u_t$. Then we have
\[
{\rd}_{\rm BL} (m_t, m_t^N) \leq \lt(\frac1N\sum_{i=1}^N |v_i(t)|^{\frac{q}{q-1}} \rt)^{1 - \frac1q} \mathscr{E}_q(X^N|\eta)(t) + \mathscr{E}_q(V^N|v)(t).
\]
\end{lemma}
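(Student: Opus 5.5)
We bound $\rd_{\rm BL}(m_t,m_t^N)$ by testing against an arbitrary $\psi\in W^{1,\infty}(\R^d;\R^d)$ with $\|\psi\|_{L^\infty}+\|\psi\|_{\rm Lip}\le1$. The idea is to write both pairings so that they become integrals against the same product measure $\frac1N\sum_i\delta_i\otimes\rho_0$, mirroring the coupling used in the definition of the modulated energies.

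\textbf{Step 1: rewrite the Eulerian pairing.} By the barycentric identity \eqref{eq:bary-identity-proof} with $F=\psi$, and since $\rho_0$ is a probability measure,
\[
\intr \psi\cdot m_t(\dz)=\intr \psi(\eta_t(x))\cdot v_t(x)\,\rho_0(\dx)=\frac1N\sum_{i=1}^N\intr \psi(\eta_t(x))\cdot v_t(x)\,\rho_0(\dx).
\]
\textbf{Step 2: rewrite the particle pairing.} Since $\rho_0$ has unit mass,
\[
\intr\psi\cdot m_t^N(\dz)=\frac1N\sum_{i=1}^N\intr \psi(x_i(t))\cdot v_i(t)\,\rho_0(\dx).
\]

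\textbf{Step 3: split the difference.} Subtracting, the integrand is
\[
\bigl(\psi(x_i)-\psi(\eta_t(x))\bigr)\cdot v_i+\psi(\eta_t(x))\cdot\bigl(v_i-v_t(x)\bigr).
\]
\emph{Second term.} Using $\|\psi\|_{L^\infty}\le1$, it is bounded by $\frac1N\sum_i\intr|v_i-v_t(x)|\,\rho_0(\dx)=\mathscr{E}_1(V^N|v)$. By Jensen (or H\"older against the probability measure $\frac1N\sum_i\delta_i\otimes\rho_0$), this is at most $\mathscr{E}_q(V^N|v)(t)$.

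\emph{First term.} Using $\|\psi\|_{\rm Lip}\le1$, it is bounded by
\[
\frac1N\sum_i|v_i|\intr|x_i-\eta_t(x)|\,\rho_0(\dx).
\]
Apply H\"older with exponents $q'=q/(q-1)$ and $q$ on the same product measure. The $v$-factor does not depend on $x$, so its $L^{q'}$ norm reduces to $\bigl(\frac1N\sum_i|v_i|^{q/(q-1)}\bigr)^{1-1/q}$. The $x$-factor gives exactly $\mathscr{E}_q(X^N|\eta)(t)$.

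Taking the supremum over admissible $\psi$ yields the claimed inequality.

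\textbf{Endpoint cases and justification.} These need a short remark. For $q=\infty$, the prefactor is $\max_i|v_i|$. For $q=1$, it is read as $\max_i|v_i|$ (the limit of the $\ell^{q'}$ mean), and H\"older degenerates to the trivial sup bound. The only nonroutine point is the justification in Step 1, which is already supplied by Lemma \ref{lem:ERA-prep}; no injectivity or Dirac assumption is needed here. I expect no genuine obstacle beyond the correct bookkeeping of the H\"older exponents.
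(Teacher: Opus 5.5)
Your proof is correct and is essentially the paper's argument: the same add-and-subtract splitting into a Lipschitz term, handled by H\"older with exponents $q/(q-1)$ and $q$ against $\frac1N\sum_i\delta_i\otimes\rho_0$, and a sup-norm term bounded by $\mathscr{E}_1(V^N|v)\le\mathscr{E}_q(V^N|v)$. One small slip is in your endpoint remark: for $q=\infty$ the conjugate exponent is $1$, so the prefactor is $\frac1N\sum_i|v_i(t)|$, not $\max_i|v_i(t)|$ (the maximum corresponds to $q=1$), and your main H\"older step already yields this sharper form directly.
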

\begin{proof}
For any $\varphi \in W^{1,\infty}(\R^d, \R^d)$, we compute
\begin{align*}
&\intr \varphi(z) \cdot \, (m^N-m)(\dz)\cr
&\quad = \frac1N\sum_{i=1}^N \varphi(x_i)\cdot v_i  -  \intr \varphi(\eta(x))\cdot v(x)\,\rho_0(\dx)\\
&\quad = \frac1N\sum_{i=1}^N \intr (\varphi(x_i)-\varphi(\eta(x)) )\cdot v_i\,\rho_0(\dx)
 +  \frac1N\sum_{i=1}^N \intr \varphi(\eta(x))\cdot (v_i-v(x) )\,\rho_0(\dx)\\
&\quad =:  I + II.
\end{align*}
For $I$, by Lipschitz continuity of $\varphi$ and the kinetic energy bound, we get
\begin{align*}
|I| &\le \|\varphi\|_{\rm Lip}\lt(\frac1N\sum_{i=1}^N |v_i|^{\frac{q}{q-1}} \rt)^{1 - \frac1q}\lt(\frac1N\sum_{i=1}^N \intr |x_i-\eta(x)|^q\,\rho_0(\dx)\rt)^\frac1q\cr
& =  \|\varphi\|_{\rm Lip}\lt(\frac1N\sum_{i=1}^N |v_i|^{\frac{q}{q-1}} \rt)^{1 - \frac1q}\mathscr{E}_q(X^N|\eta).
\end{align*}
For $II$, we find
\[
|II| \le \|\varphi\|_{L^\infty}\,\frac1N\sum_{i=1}^N \intr |v_i-v|\,\rho_0(\dx) \le \|\varphi\|_{L^\infty} \mathscr{E}_q(V^N|v).
\]
Combining the above estimates concludes the desired result.
\end{proof}

%
%
%
%
%

We are now in a position to prove the uniform-in-time mean-field convergence result stated in Theorem \ref{thm:mf}. The proof combines the uniform-in-time Lagrangian stability estimate established in Theorem \ref{thm:deri_LA} with the Wasserstein bounds derived above.
 
\begin{proof}[Proof of Theorem \ref{thm:mf}]
By Theorem \ref{thm:deri_LA}, there exists a constant $C>0$, independent of $N$ and $t$, such that
\[
\mathscr{E}_q(X^N|\eta)(t) + \mathscr{E}_q(V^N|v)(t) \le  C\lt(\mathscr{E}_q(X^N|\eta)(0)+\mathscr{E}_q(V^N|v)(0)\rt) \quad \forall\,t\ge0.
\]
Since the disintegration of $\rho_0$ along $\eta_t$ is Dirac for almost every $t\ge0$, the Eulerian state associated with the Lagrangian flow is mono-kinetic and given by
$\rho_t\otimes\delta_{u_t}$. We may therefore apply Lemma \ref{lem:d_pZ_p} wtih arbitrary order $q$ which yields
\[
\rd_q \left(\mu_t^N, \rho_t\otimes\delta_{u_t}\right)  \le  2^{1 - \frac1q}\lt( \mathscr{E}_q(X^N|\eta)(t)^q+\mathscr{E}_q(V^N|v)(t)^q \rt)^{\frac1q}  \quad \text{a.e. } t \geq 0.
\]
Taking the essential supremum over $t\ge0$ gives
\[
\esssup_{t\ge0}\rd_q \bigl(\mu_t^N, \rho_t\otimes\delta_{u_t}\bigr)\to 0 \qquad \mbox{as } N\to\infty.
\]
Moreover, since the disintegration is Dirac for almost every $t\ge0$, the associated Reynolds stress vanishes for almost every time, and hence $(\rho_t,u_t)$ satisfies the Euler--alignment system \eqref{eq:EA-2} in the sense of distributions. This completes the proof.
\end{proof}

%
%
%
%
%
%

\section*{Acknowledgments}
JAC was supported by the Advanced Grant Nonlocal-CPD (Nonlocal PDEs for Complex Particle Dynamics: Phase Transitions, Patterns and Synchronization) of the European Research Council Executive Agency (ERC) under the European Union’s Horizon 2020 research and innovation programme (grant agreement No. 883363). JAC was also partially supported by the EPSRC grant number EP/V051121/1. 
The work of YPC was supported by NRF grant no. 2022R1A2C1002820 and RS-2024-00406821. The work of ET was supported by ONR grant  N00014-2412659 and NSF grant DMS-2508407.  The author is also grateful for the hospitality of the Laboratoire Jacques-Louis Lions (LJLL) at Sorbonne University, where part of this work was completed.

%
%
%
%

\appendix

\section{Kinetic formulation associated with the Lagrangian flow}\label{app:kinetic}

This appendix provides a kinetic description of the dynamics generated by the Lagrangian flow $(\eta_t,v_t)$. More precisely, we show that the phase-space lifting
\[
\mu_t := (\eta_t,v_t)_\#\rho_0 \in \calP(\R^d\times\R^d)
\]
satisfies the Vlasov--alignment model \eqref{VA}, without any injectivity assumption on the flow, and explain how the ERA system is recovered from this kinetic equation by taking low-order velocity moments. This formulation clarifies the role of fibre disintegration and highlights the origin of Reynolds-type defect terms at the Eulerian level.
%
%
%
%
%
%
\subsection{Weak formulation of the kinetic transport equation}
Let $\Psi\in C_c^\infty([0,T)\times\R^d\times\R^d)$. By definition of the pushforward, testing $\mu_t$ against $\Psi$ corresponds to evaluating $\Psi$ along the Lagrangian trajectories:
\bq\label{eq:kin_test}
\intrr \Psi(t,z,\xi)\,\mu_t(\dz,\rd\xi)
= \intr \Psi(t,\eta_t(x),v_t(x))\,\rho_0(\dx).
\eq
Differentiating \eqref{eq:kin_test} in time and using that $(\eta_t,v_t)$ solves the Lagrangian $p$-alignment system \eqref{Lag_p} (with the regularity ensured by Theorem \ref{thm:ext}), the chain rule yields
\begin{align}\label{eq:kin_chain}
\frac{\rd}{\dt}\intrr \Psi\,\mu_t(\dz,\rd\xi)
&= \intr \Bigl(\pa_t\Psi + (\nabla_z\Psi)\cdot v_t(x)
      +(\nabla_\xi\Psi)\cdot F_p(\eta_t(x),v_t(x))\Bigr)\,\rho_0(\dx),
\end{align}
where the force field is given by
\[
F_p[\mu_t](z,\xi):=\kappa\intrr \phi(z-z')\,G_p(\xi'-\xi)\,\mu_t(\dz',\rd\xi'),
\quad G_p(\xi)=|\xi|^{p-2}\xi.
\]
Using again the identity $\mu_t=(\eta_t,v_t)_\#\rho_0$, each term in \eqref{eq:kin_chain} can be rewritten in kinetic (phase-space) variables. Indeed,
\[
\intr (\nabla_z\Psi)(t,\eta_t(x),v_t(x))\cdot v_t(x)\,\rho_0(\dx)
= \intrr (\nabla_z\Psi)(t,z,\xi)\cdot \xi\,\mu_t(\dz,\rd\xi),
\]
and similarly,
\[
\intr (\nabla_\xi\Psi)(t,\eta_t(x),v_t(x))\cdot F_p[\mu_t](\eta_t(x),v_t(x))\,\rho_0(\dx)
= \intrr (\nabla_\xi\Psi)(t,z,\xi)\cdot F_p[\mu_t](z,\xi)\,\mu_t(\dz,\rd\xi).
\]

Integrating \eqref{eq:kin_chain} over $t\in(0,T)$ and using the compact support of $\Psi$ in $[0,T)$, we obtain the weak formulation
\bq\label{eq:kin_weak}
\int_0^T\intrr \Bigl(\pa_t\Psi + (\nabla_z\Psi)\cdot \xi + (\nabla_\xi\Psi)\cdot F_p[\mu_t]\Bigr)\,\mu_t(\dz,\rd\xi)\,\dt
= -\intrr \Psi(0,z,\xi)\,\mu_0(\dz,\rd\xi).
\eq

In distributional form, \eqref{eq:kin_weak} corresponds to the kinetic transport equation
\bq\label{eq:kinetic_eq}
\pa_t\mu_t + \nabla_z\cdot(\xi\,\mu_t) + \nabla_\xi\cdot\bigl(F_p[\mu_t]\,\mu_t\bigr)=0
\quad \text{in }\calD'((0,T)\times\R^d\times\R^d),
\eq
which holds for all $p\ge2$.
%
%
%
%
%
%
\subsection{Recovery of the ERA system by velocity moments}

Let $\rho_t=(\pi_z)_\#\mu_t$ denote the spatial marginal of $\mu_t$, and define the momentum measure as the first velocity moment
\[
m_t(\dz):=\intr \xi\,\mu_t(\dz,\rd\xi).
\]
Disintegrating $\mu_t$ with respect to $\rho_t$,
\[
\mu_t(\dz,\rd\xi)=\rho_t(\dz)\,\tilde\nu_{t,z}(\rd\xi),
\]
we introduce the barycentric velocity
\[
u_t(z):=\intr \xi\,\tilde\nu_{t,z}(\rd\xi),
\]
and the fibre covariance
\[
\theta_t(z)
:=\intr (\xi-u_t(z))\otimes(\xi-u_t(z))\,\tilde\nu_{t,z}(\rd\xi),
\quad
\tau_t:=\rho_t\,\theta_t.
\]

With these definitions, taking the zeroth and first velocity moments of the kinetic equation \eqref{eq:kinetic_eq} yields, in the sense of distributions, the continuity equation for $\rho_t$ and the momentum balance equation of the ERA system, with Reynolds stress $\tau_t$ and defect force induced by the non-mono-kinetic structure of $\mu_t$. The stress tensor $\tau_t$ and the associated defect force encode the loss of closure at the Eulerian level when the kinetic measure is not mono-kinetic.

Whenever the disintegration of $\rho_0$ along $\eta_t$ is Dirac, one has $\tau_t\equiv0$, and the defect force $\calR_p[\rho_t,u_t]$ also vanishes identically. In particular, injectivity of the Lagrangian flow $\eta_t$ is a sufficient condition for this to occur. As a consequence, the ERA system closes and reduces to the Eulerian $p$-alignment equations.

%
%
%
%
%

 \section{Mean-field limit from Lagrangian to Vlasov/Eulerian $p$-alignment systems}\label{app:gene_mf}
 
This appendix is devoted to a general mean-field analysis of the $p$-alignment dynamics for $p\ge2$. In contrast to the linear case $p=2$ treated in the main text, the estimates derived here yield stability bounds with constants depending on time. As a consequence, the mean-field convergence is obtained only on finite time intervals and naturally leads to kinetic limits for general initial data.
 
 \begin{theorem}\label{thm:deri_pLA} Let $p\geq 2$. Let $\{(x_i,v_i)\}_{i=1}^N$ and $(\eta, v)$ be global classical solutions to systems \eqref{CS} and \eqref{Lag_p}, respectively. Assume that $\rho_0$ has a compact support and $u_0 \in L^\infty(\rho_0)$, so that the initial velocity diameter is finite. Then there exists a constant $C(t)>0$, independent of $N$, such that
 \[
\mathscr{E}_2(X^N|\eta)(t)+\mathscr{E}_2(V^N|v)(t)  \le C(t)\lt(\mathscr{E}_2(X^N|\eta)(0)+\mathscr{E}_2(V^N|v)(0)\rt), \quad t \geq 0.
\]
 \end{theorem}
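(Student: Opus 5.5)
We follow the same trajectory-comparison strategy as in the proof of Theorem \ref{thm:deri_LA}, now with $q=2$ and general $p\ge2$, and we accept time-dependent constants. Throughout we use the a priori bounds already available. By the maximum principle \eqref{eq:v-Linf-decay}, $|v_t(x)|\le M:=\|u_0\|_{L^\infty(\rho_0)}$ for $\rho_0$-a.e. $x$. For the particles, the same maximum principle (the discrete version of the Dini argument) gives $\max_i|v_i(t)|\le M_N:=\max_i|v_i(0)|$.

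We expect to have to assume that $M_N$ is bounded uniformly in $N$. This is the natural setting, since the initial particle velocities approximate $u_0$, which has finite velocity diameter. Under this assumption all velocity differences $v_j-v_i$ and $v_t(y)-v_t(x)$ are bounded by some $2\bar M$ with $\bar M$ independent of $N$ and $t$.

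\emph{Position estimate.} Exactly as in \eqref{eq_pos}, with $q=2$, we obtain
\[
\frac{\rd}{\dt}\mathscr{E}_2(X^N|\eta)\le \mathscr{E}_2(V^N|v).
\]

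\emph{Velocity estimate.} We differentiate $\mathscr{E}_2^2(V^N|v)$ and split the right-hand side into $I+II$ as before.

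\textbf{Term $I$.} Here $\phi(x_i-x_j)-\phi(\eta_t(x)-\eta_t(y))$ multiplies $G_p(v_j-v_i)$. Since $|G_p(v_j-v_i)|\le(2\bar M)^{p-1}$ and $\phi$ is Lipschitz, we get
\[
|I|\le C\,\mathscr{E}_2(X^N|\eta)\,\mathscr{E}_2(V^N|v).
\]

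\textbf{Term $II$.} This term is $\phi(\eta_t(x)-\eta_t(y))$ multiplying $G_p(v_j-v_i)-G_p(v_t(y)-v_t(x))$. In the $p=2$ proof we used a lower bound $\phi_m$ together with monotonicity. Here we cannot rely on that, since no spatial-diameter control is assumed. Instead, we bound $II$ crudely:
\[
|G_p(a)-G_p(b)|\le C_p(|a|+|b|)^{p-2}|a-b|\le C_p(4\bar M)^{p-2}|a-b|.
\]
Using $|a-b|\le |v_i-v_t(x)|+|v_j-v_t(y)|$ and Cauchy--Schwarz, this gives
\[
|II|\le C\,\mathscr{E}_2^2(V^N|v).
\]
Alternatively, one can symmetrize: the monotonicity of $G_p$ paired against the symmetrized difference yields a nonpositive contribution. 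Either route suffices on finite time intervals, and the crude bound avoids needing a lower bound on $\phi$.

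\emph{Closing the argument.} Combining the two bounds,
\[
\frac{\rd}{\dt}\mathscr{E}_2(V^N|v)\le C\big(\mathscr{E}_2(X^N|\eta)+\mathscr{E}_2(V^N|v)\big),
\]
with $C$ depending on $\kappa$, $p$, $\|\phi\|_{W^{1,\infty}}$ and $\bar M$, but not on $N$. Adding the position estimate and applying Gr\"onwall gives the claim with $C(t)=e^{Ct}$. To avoid differentiating the square root where the energy vanishes, we first work with the squared quantities, or with $\sqrt{\mathscr{E}^2+\e}$, and then let $\e\to0$.

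\emph{Main obstacle.} The only delicate point is the $N$-independence of the local Lipschitz constant of $G_p$. This requires a uniform-in-$N$ velocity bound on the particles. The particle maximum principle reduces this to a bound on the initial particle velocities. We would state it as the assumption $\sup_N\max_i|v_i(0)|<\infty$, implicit in the hypothesis that the initial diameter is finite.
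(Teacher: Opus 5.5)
Your proposal is correct and is essentially the paper's proof. It has the same position estimate and the same splitting into $I+II$, with $I$ controlled through $|G_p(v_j-v_i)|\le\rd_{V^N}^{p-1}(0)$. The paper handles $II$ by exactly the symmetrization/monotonicity alternative you mention, so $II\le0$ and the Gr\"onwall rate is $\max\{1,2\kappa\|\phi\|_{\rm Lip}\rd_{V^N}^{p-1}(0)\}$. Its constant therefore carries the same implicit uniform-in-$N$ dependence on the initial particle velocities that you make explicit.

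That dependence is not actually needed, but removing it requires the monotone route rather than your crude Lipschitz bound. Write the force difference as $[\phi(x_i-x_j)-\phi(\eta_t(x)-\eta_t(y))]\,G_p(v_t(y)-v_t(x))+\phi(x_i-x_j)[G_p(v_j-v_i)-G_p(v_t(y)-v_t(x))]$. The first piece is bounded by $4\kappa\|\phi\|_{\rm Lip}\rd_v^{p-1}(0)\,\mathscr{E}_2(X^N|\eta)\,\mathscr{E}_2(V^N|v)$, with $\rd_v(0)\le 2\|u_0\|_{L^\infty(\rho_0)}$. The second piece is still nonpositive after symmetrizing in $(i,x)\leftrightarrow(j,y)$, since $\phi(x_i-x_j)$ is symmetric. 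The resulting $C(t)$ then depends only on the Lagrangian data, as the hypotheses of the statement suggest.
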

 
 \begin{proof} We first observe from the maximum principle for the particle system (see, e.g. \eqref{eq:dia_v}) that
 \[
\rd_{V^N}(t) \leq \rd_{V^N}(0), \quad t >0.
 \]
 Since the case $p=2$ can be easily obtained by almost the same argument as in the proof of Theorem \ref{thm:deri_LA}, we only consider $p > 2$.
 
Note that the position estimate remains the same:
\[
\frac{\rd}{\dt} \mathscr{E}_2(X^N|\eta)(t) \leq \mathscr{E}_2(V^N|v)(t).
\]

For the velocity error, we differentiate $\mathscr{E}_2^2(V^N|v)$ and use the nonlinear operator $G_p(z)=|z|^{p-2}z$:
\begin{align*}
&\frac{\rd}{\dt} \mathscr{E}_2^2(V^N|v)(t) \cr
&\quad = \frac {2\kappa}N \sum_{i=1}^N \intr  (v_i - v_t(x))\cdot \bigg\{\frac1N \sum_{j=1}^N\phi (x_j - x_i)G_p(v_j - v_i)  \cr
&\hspace{6cm} - \intr \phi(\eta_t(x) - \eta_t(y))  G_p(v_t(y) - v_t(x))\,\rho_0(\dy) \bigg\}\,\rho_0(\dx)\cr
&\quad = \frac {2\kappa}{N^2} \sum_{i,j=1}^N \intrr  (v_i - v_t(x)) \lt(\phi(x_i - x_j) - \phi(\eta_t(x) - \eta_t(y)) \rt)   \cdot G_p(v_j - v_i) \,\rho_0(\dx)\rho_0(\dy) \cr
&\quad + \frac {2\kappa}{N^2} \sum_{i,j=1}^N \intrr  (v_i - v_t(x)) \phi(\eta_t(x) - \eta_t(y)) \cr
&\hspace{7cm} \cdot  \lt\{ G_p(v_j - v_i)  -  G_p(v_t(y) - v_t(x)) \rt\}   \rho_0(\dx)\rho_0(\dy) \cr
&\quad =: I + II.
\end{align*}

Using $\|\phi\|_{\rm Lip}$ and $\rd_{V^N}(t)$, we get
 \begin{align*}
|I| &\leq \|\phi\|_{\rm Lip} \rd^{p-1}_{V^N}(t)\frac {2\kappa}{N^2} \sum_{i,j=1}^N \intrr |v_i - v_t(x)|  \lt( |x_i - \eta_t(x)| + |x_j  - \eta_t(y)|\rt)\,\rho_0(\dx)\rho_0(\dy)\cr
 &\leq 4\kappa\|\phi\|_{\rm Lip} \rd^{p-1}_{V^N}(0) \mathscr{E}_2(X^N|\eta)(t) \mathscr{E}_2(V^N|v)(t).
 \end{align*}

Using $\phi\ge0$ and the uniform monotonicity of $G_p$,
  \[
( G_p(x) - G_p(y)) \cdot (x-y) \geq 2^{2-p}|x-y|^p, \quad p \ge 2,
  \]
 we estimate $II$ as 
 \begin{align*}
II &= -\frac \kappa{N^2} \sum_{i,j=1}^N \intrr \phi(\eta_t(x) - \eta_t(y)) \lt\{(v_i - v_t(x)) - (v_j - v_t(y))\rt\} \cr
& \hspace{5cm} \cdot \lt\{G_p(v_i - v_j) - G_p(v_t(x) - v_t(y))   \rt\} \rho_0(\dx)\rho_0(\dy)\cr
 &\leq 0.
  \end{align*}
   
Combining the above estimates gives
\[
\frac{\rd}{\dt} \mathscr{E}_2(V^N|v)(t) \leq  2\kappa\|\phi\|_{\rm Lip} \rd^{p-1}_{V^N}(0) \mathscr{E}_2(X^N|\eta)(t),
\]
and subsequently,
\[
\frac{\rd}{\dt} \lt(  \mathscr{E}_2(X^N|\eta)(t) +  \mathscr{E}_2(V^N|v)(t) \rt) \leq \max\lt\{ 1, \, 2\kappa\|\phi\|_{\rm Lip} \rd^{p-1}_{V^N}(0) \rt\} \lt(  \mathscr{E}_2(X^N|\eta)(t) +  \mathscr{E}_2(V^N|v)(t) \rt).
\]

Finally, applying Gr\"onwall's lemma yields the claimed time-dependent stability estimate.
 \end{proof}
 
The time-dependent stability estimate of Theorem \ref{thm:deri_pLA} allows us to derive a corresponding mean-field convergence result on finite time intervals. Since the argument follows the same steps as in the proof of Theorem \ref{thm:mf}, with the uniform-in-time bound replaced by the estimate of Theorem \ref{thm:deri_pLA}, we only state the result and omit the proof.

 \begin{theorem}
Let $T>0$ and $p\geq 2$. Assume that the hypotheses of Theorem \ref{thm:deri_pLA} hold. If the initial modulated energies satisfy
\[
\mathscr{E}_2(X^N|\eta)(0)+\mathscr{E}_2(V^N|v)(0)\to 0\quad\text{as }N\to\infty,
\]
then the empirical measures $\mu_t^N$ associated with the particle system \eqref{CS} converge to a kinetic measure $(\eta_t,v_t)_\#\rho_0$, which is a distributional solution of the Vlasov--alignment equation \eqref{VA} with $p\ge2$, in the sense that
\[
\sup_{t \in [0,T]}
\rd_2 \bigl(\mu_t^N,(\eta_t,v_t)_\#\rho_0\bigr)
\to 0\quad\text{as }N\to\infty.
\]

Suppose in addition that the disintegration of $\rho_0$ along $\eta_t$ is Dirac for almost every $t \in [0,T]$. Then the kinetic limit $(\eta_t,v_t)_\#\rho_0$ is mono-kinetic for almost every $t\in[0,T]$, and the empirical measures $\mu_t^N$ converge toward the associated Eulerian state $\rho_t\otimes\delta_{u_t}$ in the sense that
\[
\esssup_{t \in [0,T]} \rd_2 \bigl(\mu_t^N, \rho_t\otimes\delta_{u_t}\bigr) \to 0\quad\text{as }N\to\infty,
\]
where $(\rho_t,u_t)$ denotes the Eulerian pair associated with the Lagrangian flow $(\eta_t,v_t)$ through the Lagrangian--Eulerian correspondence described in Section \ref{ssec:LEc}, which satisfies \eqref{EA} in the sense of distributions on $(0,T)\times\R^d$.
\end{theorem}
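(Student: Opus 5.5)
The plan is to prove the finite-time convergence toward the kinetic measure first, and then to specialize to the mono-kinetic case. Both steps rest on Theorem \ref{thm:deri_pLA}, which gives a time-dependent stability constant $C(t)$. Inspecting its Gr\"onwall proof, one can take $C(t)=\exp\big(\max\{1,2\kappa\|\phi\|_{\rm Lip}\rd_{V^N}^{p-1}(0)\}t\big)$. This constant is nondecreasing in $t$, so $\sup_{t\in[0,T]}C(t)=C(T)$. It is uniform in $N$ provided $\rd_{V^N}(0)$ is bounded uniformly in $N$. I will take this as part of the hypotheses, since it holds for the natural sampling construction where $v_i(0)=u_0(X_i(0))$ with $u_0\in L^\infty$. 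Consequently
\[
\sup_{t\in[0,T]}\big(\mathscr{E}_2(X^N|\eta)(t)+\mathscr{E}_2(V^N|v)(t)\big)\le C(T)\big(\mathscr{E}_2(X^N|\eta)(0)+\mathscr{E}_2(V^N|v)(0)\big)\to0 .
\]

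Next, I compare the empirical measure $\mu_t^N$ with $\mu_t:=(\eta_t,v_t)_\#\rho_0$ through the product-type coupling $\pi_t:=\frac1N\sum_i \delta_{(x_i(t),v_i(t))}\otimes(\eta_t,v_t)_\#\rho_0$. Its first marginal is $\mu^N_t$ and its second is $\mu_t$. Using $|(a,b)|^2\le |a|^2+|b|^2$, this coupling gives
\[
\rd_2^2(\mu_t^N,\mu_t)\le \frac1N\sum_i\intr\big(|x_i(t)-\eta_t(x)|^2+|v_i(t)-v_t(x)|^2\big)\rho_0(\dx)=\mathscr{E}_2^2(X^N|\eta)(t)+\mathscr{E}_2^2(V^N|v)(t).
\]
This is the kinetic analogue of Lemma \ref{lem:d_pZ_p}, and it requires no Dirac assumption. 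Taking the supremum over $[0,T]$ and using the previous step yields the first convergence. The fact that $\mu_t$ is a distributional solution of \eqref{VA} is exactly the content of Appendix \ref{app:kinetic}, namely \eqref{eq:kinetic_eq}.

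For the second part, on the set of full measure of times where the disintegration $\nu_{t,z}$ is Dirac, I note that $v_t(x)=u_t(\eta_t(x))$ for $\rho_0$-a.e.\ $x$. This follows because $u_t(z)=\int v_t\,\rd\nu_{t,z}$ by Lemma \ref{lem:ERA-prep}. Hence $\mu_t=(\eta_t,u_t\circ\eta_t)_\#\rho_0=\rho_t\otimes\delta_{u_t}$, where the right side is read as the measure $\rho_t(\dz)\delta_{u_t(z)}(\rd\xi)$. The Wasserstein bound therefore transfers, and I take the essential supremum over $t\in[0,T]$; alternatively, Lemma \ref{lem:d_pZ_p} with $q=2$ gives the same bound up to a factor $\sqrt2$. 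Finally, at those times $\tau_t=0$, and $\calR_p$ vanishes because $\omega_t=0$ makes $\calK_t\equiv0$. Inserting this into the distributional ERA identities of Theorem \ref{thm:ERA}, whose time integrals ignore null sets of times, yields \eqref{EA} on $(0,T)\times\R^d$.

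The main obstacle is the $N$-uniformity of $C(T)$, which hinges on a uniform bound for $\rd_{V^N}(0)$. If this is not available from the hypotheses, I would instead replace $\rd_{V^N}(0)$ by a bound obtained from the convergence of the initial modulated energies together with the boundedness of $u_0$. However, $\mathscr{E}_2$ only controls averages, not the maximum, so an explicit uniform velocity bound on the particle data should be assumed.
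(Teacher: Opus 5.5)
Your proof follows the route the paper indicates. The paper omits the proof, saying it repeats that of Theorem \ref{thm:mf} with Theorem \ref{thm:deri_pLA} in place of the uniform bound. The steps match: finite-time stability of the modulated energies, the product coupling $\mu_t^N\otimes(\eta_t,v_t)_\#\rho_0$ (your Euclidean version even avoids the factor $2^{1-1/q}$ of Lemma \ref{lem:d_pZ_p}), Appendix \ref{app:kinetic} for the Vlasov equation, and $(\eta_t,v_t)_\#\rho_0=\rho_t\otimes\delta_{u_t}$ together with $\tau_t=0$, $\calR_p=0$ at Dirac times.

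Your worry about $N$-uniformity is legitimate. As written, the proof of Theorem \ref{thm:deri_pLA} puts $\rd_{V^N}(0)^{p-1}$ into the exponent of $C(T)$. The stated hypotheses do not control this quantity: a single particle with speed $N^{1/4}$ is invisible in $\mathscr{E}_2(V^N|v)(0)$. However, you do not need the extra hypothesis on the particle data. Split the commutator the other way:
\begin{align*}
&\phi(x_i-x_j)G_p(v_j-v_i)-\phi(\eta_t(x)-\eta_t(y))G_p(v_t(y)-v_t(x))\\
&\quad=\phi(x_i-x_j)\bigl[G_p(v_j-v_i)-G_p(v_t(y)-v_t(x))\bigr]\\
&\qquad+\bigl[\phi(x_i-x_j)-\phi(\eta_t(x)-\eta_t(y))\bigr]G_p(v_t(y)-v_t(x)).
\end{align*}

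\textbf{First piece.} The weight $\phi(x_i-x_j)\ge0$ is even and independent of $(x,y)$. Symmetrizing under $(i,x)\leftrightarrow(j,y)$ therefore still turns this contribution into
\[
-\frac{\kappa}{N^2}\sum_{i,j}\iint\phi(x_i-x_j)(a-b)\cdot(G_p(a)-G_p(b))\,\rho_0(\dx)\rho_0(\dy)\le0,
\]
with $a=v_j-v_i$ and $b=v_t(y)-v_t(x)$.

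\textbf{Second piece.} This term now carries the Lagrangian velocity difference instead of the particle one, so it is controlled by $\rd_v(t)\le\rd_v(0)\le 2\|u_0\|_{L^\infty(\rho_0)}$. This gives
\[
\frac{\rd}{\dt}\mathscr{E}_2(V^N|v)(t)\le 2\kappa\|\phi\|_{\rm Lip}\,\rd_v(0)^{p-1}\,\mathscr{E}_2(X^N|\eta)(t).
\]
Hence $C(T)=\exp\bigl(\max\{1,2\kappa\|\phi\|_{\rm Lip}\rd_v(0)^{p-1}\}T\bigr)$ is independent of $N$ under exactly the stated hypotheses. With this, the rest of your argument goes through unchanged.
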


%
%
%
%

\bibliographystyle{abbrv}
\bibliography{CCT_LA}

%
%
%
%
%
%
%
%
%
%
%

\end{document}